\numberwithin{equation}{section}
\newcommand{\Fix}{\operatorname{Fix}}
\newcommand{\dist}{\operatorname{dist}}
\newcommand{\diff}{\operatorname{Diff}}
\newcommand{\al} {\alpha}       
\newcommand{\ga} {\gamma}       \newcommand{\Ga}{\Gamma}
      \newcommand{\La}{\Lambda}
\newcommand{\s} {\sigma}
\newcommand{\per}{\text{Per}}
\newcommand{\tf}{\tilde f}
\newcommand{\tM}{\tilde M}
\newcommand{\Fs}{\mathcal{F}^{s}}
\newcommand{\Fu}{\mathcal{F}^{u}}
\newcommand{\Fc}{\mathcal{F}^{c}}
\newcommand{\Fcs}{\mathcal{F}^{cs}}
\newcommand{\Fcu}{\mathcal{F}^{cu}}
\newcommand{\tFc}{\tilde {\mathcal{F}^{c}}}
\def \NN {{\mathbb N}}
\def \DD {{\mathbb D}}
\def \RR {{\mathbb R}}
\def \ZZ {{\mathbb Z}}
\def \T {{\mathbb T}}
\def \cO {{\mathcal O}}
\newtheorem{theorem}{Theorem}[section]
\newtheorem{proposition}[theorem]{Proposition}
\newtheorem{lemma}[theorem]{Lemma}
\newtheorem{definition}[theorem]{Definition}
\theoremstyle{remark}
\newtheorem{remark}[theorem]{Remark}
\begin{document}

\renewcommand{\subjclassname}{\textup{2000} Mathematics Subject Classification}


\setcounter{tocdepth}{2}

\keywords{}

\subjclass{}

\renewcommand{\subjclassname}{\textup{2000} Mathematics Subject Classification}


\title{Ergodicity and Partial Hyperbolicity on Seifert manifolds}

\begin{abstract} We show that conservative partially hyperbolic diffeomorphism isotopic to the identity on Seifert 3-manifolds are ergodic.

\end{abstract}

\thanks{JRH and RU are partially supported by NSFC 11871262.
This research was partially supported by the
Australian Research Council.}
\author{Andy Hammerlindl}
\address{School of Mathematical Sciencies, Monash University, Victoria 3800 Australia}
\email{andy.hammerlindl@monash.edu}

\author{Jana Rodriguez Hertz}
\address{Department of Mathematics, Southern University of Science and Technology, 1088 Xueyuan Rd., Xili, Nanshan District, Shenzhen, Guangdong, China 518055}
\email{rhertz@sustc.edu.cn}

\author{Ra\'ul Ures}
\address{Department of Mathematics, Southern University of Science and Technology, 1088 Xueyuan Rd., Xili, Nanshan District, Shenzhen, Guangdong, China 518055} \email{ures@sustc.edu.cn}

\maketitle
\section{Introduction}

Since the foundational work of Grayson, Pugh, and Shub,
a large focus of the study of partially hyperbolic dynamics
has been to determine which of these systems are ergodic \cite{gps}.
It is conjectured that ``most'' volume preserving partially hyperbolic
diffeomorphisms are ergodic,
and indeed, when the center is one-dimensional,
it is known that the subset of ergodic diffeomorphism
is $C^1$-open and $C^\infty$-dense \cite{hhu}.
Moreover, for any dimension of center,
the set of ergodic diffeomorphisms contains
a $C^1$-open and $C^1$-dense subset of the space of
all volume preserving partially hyperbolic diffeomorphisms \cite{acw}.

In the three-dimensional setting
where each of the stable $E^s$, unstable $E^u$, and center $E^c$
bundles is one-dimensional,
it is further conjectured that there is a unique
obstruction to ergodicity:
the presence of embedded tori tangent to the $E^u \oplus E^s$ direction
\cite{chhu}.
We call such a torus a $us$-\emph{torus}.
This conjecture has been verified for
nilmanifolds \cite{rru_nilman} and solvmanifolds \cite{ha}.
Further, only certain manifolds support a $us$-torus \cite{hhu3}
and for any system with a $us$-torus
an exact description of the possible ergodic decompositions is known \cite{ha}.
For derived-from-Anosov systems on the 3-torus,
the question was partially answered in \cite{hu}
and recently S.~Gan and Y.~Shi have announced that all
of these systems are ergodic \cite{gash}.

To verify the conjecture,
it is necessary to show in dimension three
that any system without a $us$-torus is ergodic.
Here, we consider this question in the setting
of Seifert manifolds and prove the following.


\begin{theorem}\label{maintheorem} Let $f:M\to M$ be a $C^{2}$ conservative partially hyperbolic diffeomorphism on a 3-dimensional Seifert manifold $M$ such that $f$ is isotopic to the identity. Then, $f$ is accessible, hence ergodic. 
\end{theorem}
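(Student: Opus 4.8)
The plan is to reduce to accessibility: that accessibility implies ergodicity for a $C^{2}$ conservative partially hyperbolic diffeomorphism with one-dimensional center is the Burns--Wilkinson criterion (center bunching being automatic in this dimension), so I quote it and concentrate on proving that $f$ is accessible, arguing by contradiction. Assume then that $f$ is not accessible.

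The Rodriguez Hertz--Rodriguez Hertz--Ures structure theorem in dimension three now applies: the union of the accessibility classes that are not open is a nonempty, compact, $f$-invariant set laminated by $C^{1}$ surfaces tangent to $E^{s}\oplus E^{u}$; take a minimal sublamination $\Lambda$. The crux is to show, using that $M$ is Seifert and $f\simeq\mathrm{id}$, that $\Lambda$ consists of a single closed leaf. Passing to a finite cover I may take the Seifert fibration to be a genuine circle bundle $\pi\colon M\to B$ over a closed surface, and I would compare $\Lambda$ with $\pi$: a leaf of $\Lambda$ is a complete surface transverse to a line field homotopic to the fibers, which should preclude the configurations a non-compact minimal leaf would need; the extreme possibility $\Lambda=M$, an $f$-invariant $us$-foliation, is to be excluded separately (such a foliation is taut and $f$ acts on it with leafwise expansion and contraction, which is incompatible with $f\simeq\mathrm{id}$ on a Seifert manifold). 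Granting that $\Lambda$ has a compact leaf, minimality forces $\Lambda=L$, a single closed surface with $f(L)=L$; since $\Fs\cap L$ and $\Fu\cap L$ are transverse nonsingular one-dimensional foliations of $L$ that are respectively uniformly contracted and expanded, $f|_{L}$ is a topologically Anosov homeomorphism, so $L$ is a $2$-torus, $f|_{L}$ is conjugate to a hyperbolic automorphism, and $f_{*}$ acts hyperbolically on $H_{1}(L;\mathbb{Z})\cong\mathbb{Z}^{2}$; in particular $L$ is incompressible (a hyperbolic action has no invariant rational line, and the $\pi_{1}$-trivial alternative disappears after lifting $L$ to the aspherical cover $\widetilde{M}\cong\RR^{3}$).

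Finally the isotopy hypothesis produces the contradiction. By rigidity of Seifert fibrations $f$ is isotopic to a fiber-preserving homeomorphism, which then induces the identity on $B$ and fixes the homotopy class $c$ of a regular fiber, so $f_{*}$ fixes $[c]$ in $H_{1}(M)$. The incompressible torus $L$ is isotopic to a vertical one (saturated by fibers) or to a horizontal one. If $L$ is vertical, $[c]$ is a nonzero, $f_{*}$-fixed class in $H_{1}(L;\mathbb{Q})\cong\mathbb{Q}^{2}$, contradicting the hyperbolicity of $f_{*}|_{H_{1}(L;\mathbb{Q})}$. If $L$ is horizontal, $M$ is finitely covered by a torus bundle over the circle, hence carries Euclidean or Nil geometry and the statement follows from the nilmanifold case already known (equivalently, pushing $f_{*}|_{H_{1}(L)}$ through the virtually injective inclusion $H_{1}(L)\to H_{1}(M)$ again contradicts $f_{*}=\mathrm{id}$). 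Either way non-accessibility is untenable, so $f$ is accessible, hence ergodic. The step I expect to resist is the one in the middle paragraph — producing a compact leaf of $\Lambda$ and eliminating the invariant-$us$-foliation case — since precisely there the Seifert fibration and the hypothesis $f\simeq\mathrm{id}$ must be made to constrain the invariant $us$-lamination, rather than invoking soft three-manifold topology.
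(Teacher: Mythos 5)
Your opening reduction (accessibility implies ergodicity, then argue by contradiction from a minimal sublamination $\Lambda\subset\Gamma(f)$) matches the paper, but the central step of your plan --- producing a compact leaf of $\Lambda$ --- is not merely the step that ``resists'': it cannot be carried out, because no compact leaf can exist in this setting, and the entire content of the theorem lies in the case you dismiss in one sentence. A compact leaf of $\Gamma(f)$ is a $us$-torus carrying Anosov dynamics (Proposition \ref{sublamination}), and by the classification of manifolds admitting Anosov tori (Theorem \ref{anosov.tori}, from \cite{hhu3}) this forces $M$ to be the $3$-torus or a mapping torus of a torus automorphism --- never a Seifert manifold with hyperbolic base orbifold. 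So the configuration your second and third paragraphs analyze at length (hyperbolic action on $H_1(L)$, incompressibility, vertical versus horizontal tori, the fiber class) never occurs; in the paper this exclusion is a short preliminary (Proposition \ref{ushorizontal}), not the crux, and it does not even use the isotopy hypothesis. What remains, and what occupies essentially all of the paper, is the case where every leaf of $\Lambda$ is non-compact --- including $\Lambda=M$, an invariant $us$-foliation, which necessarily happens when $f$ has no periodic points. Your proposed disposal of that case (``leafwise expansion and contraction is incompatible with $f\simeq\mathrm{id}$ on a Seifert manifold'') is exactly the assertion that needs proof, and nothing in your outline supplies it.

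For comparison, the paper closes that gap as follows: it shows the lamination is horizontal (Brittenham--Thurston) and, using the nonzero Euler number of the circle bundle and the induced surface-group action on $S^1$, that an iterate of $f$ fixes every leaf and admits a good lift to the universal cover fixing every lifted leaf (Theorem \ref{leafinvariance}, Proposition \ref{fixedcover}). When $f$ has periodic points, this is combined with an invariant center curve and Novikov's theorem to produce two distinct fixed points of an Anosov-like dynamics on a planar leaf, contradicting Mendes' theorem (Theorem \ref{mendes}). When $f$ has no periodic points, one proves completeness of the center-stable and center-unstable foliations, verifies the hypotheses of Bonatti--Wilkinson (Theorem \ref{bowithm}) to obtain a topological Anosov flow along the center, shows this flow regulates the uniform $\mathbb{R}$-covered $us$-foliation, and contradicts Fenley's theorem (Theorem \ref{fenleythm}) since $\pi_1(M)$ is not virtually solvable; ergodicity in that case also uses Herman's theorem on the invariant compact center circle. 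None of these ingredients appears in your proposal, so the argument as written does not close.
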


In this setting of Seifert manifolds,
Barthelm\'e, Fenley, Frankel, and Potrie
have announced
that any partially hyperbolic diffeomorphism
isotopic to the identity is leaf conjugate
to a topological Anosov flow \cite{bffp}.
Using this result would simplify some parts of our proof.
However, we present in this paper
a proof of Theorem \ref{maintheorem}
which relies only on existing results already in the literature.

Further,
Fenley and Potrie have adapted
techniques in \cite{bffp} to the study of accessibility
classes and announced that for any closed 3-mani\-fold
with hyperbolic geometry, every volume preserving
partially hyperbolic diffeomorphism
is accessible and ergodic \cite{fenpot}.

\smallskip{}

For Theorem \ref{maintheorem}, we split the proof into two
cases depending on wheth\-er or not the diffeomorphism has periodic points.
Note that in our setting, there are diffeomorphisms without periodic points.
For instance, one can consider an Anosov flow on a Seifert manifold
and take the time-$t$ map where $t$ is not the period of any periodic orbit.
Most of the arguments in the setting of no periodic points
are general and do not make use of the fact that we are on a Seifert
manifold. Because of this, we are also able to establish a 
theorem which holds for general 3-manifolds.

\begin{theorem}\label{noperiodicpoints} Let $f$ be a $C^2$ conservative partially hyperbolic diffeomorphism on a $3$-dimensional manifold $M$. If $f$ has a compact periodic center leaf and no periodic points, then $f$ is ergodic. 
\end{theorem}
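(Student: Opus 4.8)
The plan is to show that $f$ is essentially accessible; since the center bundle $E^{c}$ is one--dimensional, $f$ automatically satisfies the center bunching condition, so ergodicity then follows from the Burns--Wilkinson theorem. Assume for contradiction that $f$ is not essentially accessible, so there is an $su$--saturated measurable set $X$ with $0<\leb(X)<1$. As essential accessibility is unaffected by replacing $f$ by an iterate (the strong stable and unstable foliations, hence the $su$--saturated sets, are unchanged), we may pass to $g=f^{n}$ with $g(\gamma)=\gamma$; note that $g$ still has no periodic points. Since $g$ has no periodic points, $g|_{\gamma}$ is a fixed--point--free circle homeomorphism, which after replacing $g$ by $g^{2}$ we may take to be orientation preserving; by Poincar\'e's theory it then has irrational rotation number, is uniquely ergodic, and its unique minimal set $K\subseteq\gamma$ is contained in every non--empty closed $g$--invariant subset of $\gamma$.

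Since $f$ is not essentially accessible it is in particular not accessible, so by the structure theory of Rodriguez Hertz--Rodriguez Hertz--Ures the union $\Gamma$ of the accessibility classes that are not open is a non--empty compact $g$--invariant $su$--saturated set; each such class is a $C^{1}$ injectively immersed surface, necessarily tangent to $E^{s}\oplus E^{u}$ (a $C^{1}$ surface containing the strong stable and strong unstable leaves through each of its points), these surfaces laminate $\Gamma$, and each connected component of $M\setminus\Gamma$ is a single open accessibility class. The failure of essential accessibility then forces either $M\setminus\Gamma$ to be disconnected, or $\leb(\Gamma)>0$. When $\Gamma\neq M$, I would use the structure theory to produce a compact surface tangent to $E^{s}\oplus E^{u}$ that is invariant under some iterate $f^{m}$ --- a leaf of a minimal sublamination, or a periodic leaf bounding an open accessibility class: such a surface carries the two transverse line fields $E^{s},E^{u}$, hence is a torus or a Klein bottle, and $f^{m}$ restricted to it contracts $E^{s}$ while expanding $E^{u}$, so $f^{m}$ is Anosov there and has periodic points, contradicting the hypothesis.

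It remains to deal with the case $\Gamma=M$, in which $E^{s}\oplus E^{u}$ is integrable and $M$ carries a $g$--invariant foliation $\Fus$ by surfaces to which the compact center leaf $\gamma$ is everywhere transverse; here the accessibility classes are exactly the leaves of $\Fus$, so the $su$--saturated sets are precisely those pulled back from the leaf space. The plan is to pass to the transverse dynamics. If the leaf space of $\Fus$ is a circle $B$, then $\Fus$ is (up to holonomy) a fibration by compact surfaces, $g$ induces $\bar g\colon B\to B$ preserving the push--forward $\mu_{B}$ of volume, and $\gamma$ maps onto $B$ as a finite covering; thus $\bar g$ has no periodic points, since a periodic point would give a compact $us$--leaf invariant under an iterate of $f$ and hence, as above, periodic points of $f$. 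Therefore $\bar g$ has irrational rotation number and is uniquely ergodic, $(B,\bar g,\mu_{B})$ is ergodic, and since every $su$--saturated set is pulled back from $B$ this contradicts $0<\leb(X)<1$. The main obstacle is to carry this through when the leaf space of $\Fus$ is not a circle --- when $\Fus$ has non--Hausdorff leaf space, or compact leaves accumulating on non--compact ones --- and to show that in all such cases the absence of periodic points, together with the minimal (uniquely ergodic) dynamics of $g$ on the compact center leaf $\gamma$, still forces the transverse dynamics of $\Fus$ to admit no non--trivial invariant set. This is precisely the step that uses nothing about $M$ being a Seifert manifold, which is why the statement can be given for arbitrary $3$--manifolds.
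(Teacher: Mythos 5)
There is a genuine gap, and you have located it yourself: the case $\Gamma(f)=M$ with a minimal $us$-foliation is not a troublesome edge case to be ``carried through'' --- it is the only case that actually occurs, and your leaf-space strategy cannot handle it. By Proposition \ref{sublamination}, the absence of periodic points forces $\Gamma(f)=M$ and forces the orbit of every $us$-leaf to be dense, so the leaf space of $\Fus$ is never a circle (that would require all leaves compact, hence Anosov tori and periodic points); it is the quotient of a minimal foliation, and there is no transverse dynamics on a reasonable base to pass to. The paper closes this case with three ingredients absent from your proposal: first, by Katok's theorem the absence of periodic points forces every invariant measure to have zero center exponent, whence $\tfrac1n\log\|Df^n|_{E^c}\|\to 0$ uniformly, so $f^n$ is $2$-normally hyperbolic at $\gamma$ for large $n$ and $\gamma$ is a $C^2$ circle; second, Herman's theorem that a $C^2$ circle diffeomorphism with irrational rotation number is ergodic for Lebesgue measure --- this is essential, because the unique ergodicity of the homeomorphism $g|_\gamma$ that you invoke only controls the unique invariant \emph{measure}, which may well be singular (supported on a Lebesgue-null minimal set), and therefore says nothing about whether the invariant set $X\cap\gamma$ has full or null length in $\gamma$; third, $\gamma$ is a complete transversal to the minimal, absolutely continuous $us$-foliation, so full/null measure of $X\cap\gamma$ in $\gamma$ transfers to full/null volume of $X$ in $M$. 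Without the smoothness of $\gamma$ and Herman's theorem, the transverse circle gives you nothing measurable.

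A secondary error: in the case $\Gamma(f)\neq M$ you claim to produce a \emph{compact} invariant surface tangent to $E^s\oplus E^u$ (a torus or Klein bottle). Boundary leaves of a minimal sublamination, and periodic leaves bounding open accessibility classes, need not be compact; what the structure theory (Proposition \ref{sublamination}) actually provides is a possibly non-compact periodic boundary leaf carrying Anosov dynamics whose periodic points are dense in the intrinsic topology. This still contradicts $\per(f)=\varnothing$, but not by the route you describe.
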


The assumption of a compact periodic center leaf is likely unnecessary.  R.~Saghin and J.~Yang have announced the following result:
\begin{quote}\label{periodicleaf}
    Let $f\in\diff^{2}_{m}(M)$ be partially hyperbolic, where $M$ is a
    $3$-manifold. If $\per(f)=\varnothing$,  then $f$ has a compact periodic
    center manifold \cite{sy}.
\end{quote}
However, as a proof of this result is not yet available, we state Theorem \ref{noperiodicpoints} as above.

Finally, we consider the family of hyperbolic orbifolds
which consist of a sphere with exactly three cone points added.
These orbifolds are small enough that their mapping class group
is trivial, and using Theorem \ref{maintheorem}
we show that any partially hyperbolic diffeomorphism
on the unit tangent bundle of such an orbifold is ergodic.
Hence, this gives another family of manifolds
where the above conjecture of ergodicity is established.

\begin{theorem}\label{turnover}
    Let $\Sigma$ be an orbifold which is a sphere with exactly
    three cone points added.
    Then any partially hyperbolic diffeomorphism $f$
    defined on the unit tangent bundle of $\Sigma$
    has an iterate which lifts to a map
    $f_1$ defined on a circle bundle and which is isotopic to the identity.
    In particular,
    if $f$ is volume preserving,
    then it is accessible and ergodic.
\end{theorem}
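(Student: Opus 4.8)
The plan is to reduce the statement to Theorem A by showing that, after passing to an iterate and lifting to a suitable circle bundle, any partially hyperbolic diffeomorphism of the unit tangent bundle $T^1\Sigma$ of a turnover orbifold $\Sigma$ becomes isotopic to the identity. The key point is that the mapping class group of $\Sigma$ is trivial: a sphere with exactly three cone points has no essential simple closed curves (every such curve bounds a disk with at most one cone point, which carries no mapping class information), so $\mathrm{MCG}(\Sigma)$ is finite, and in the orientation-preserving case it is in fact trivial. First I would recall the structure of $T^1\Sigma$ as a Seifert fibered space over $\Sigma$ and note that a partially hyperbolic $f$ need not preserve the Seifert fibration on the nose, but by the classification of Seifert structures (the fibration is essentially unique for these hyperbolic base orbifolds) $f$ must permute the fibers up to isotopy, hence acts on the base $\Sigma$ by an element of $\mathrm{MCG}(\Sigma)$; passing to an iterate we may assume this action is trivial and orientation-preserving on the fiber.

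Next I would pass from the orbifold bundle to an honest circle bundle. The orbifold $\Sigma$ has a finite manifold cover (or one removes the singular fibers and works with the induced structure), and more to the point, there is a finite cover $M_1\to T^1\Sigma$ which is a genuine $S^1$-bundle over a surface; the lift $f_1$ of an appropriate iterate of $f$ to $M_1$ is again $C^2$, conservative, and partially hyperbolic. The step I expect to require the most care is verifying that $f_1$ is isotopic to the identity on $M_1$: having killed the action on the base and on the fiber orientation, $f_1$ acts trivially on $\pi_1(M_1)$ and on homology, and one must promote this algebraic triviality to an actual isotopy. For Seifert manifolds with hyperbolic base this follows from the fact that such manifolds are aspherical with the relevant mapping class group detected by the outer automorphism group of $\pi_1$ (Waldhausen-type rigidity), so a map inducing the identity outer automorphism is isotopic to the identity; I would cite the standard references for this rather than reprove it.

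With $f_1$ isotopic to the identity on the circle bundle $M_1$, Theorem A applies directly and yields that $f_1$ is accessible, hence ergodic (when volume preserving). Finally I would descend this conclusion back to $f$ on $T^1\Sigma$: accessibility and ergodicity of a finite lift of an iterate imply ergodicity of the original map, using that the deck group is finite and that an $f$-invariant set lifts to an $f_1^k$-invariant set whose measure is controlled by the covering degree, together with the standard fact that ergodicity of $f^k$ for some $k\geq 1$ implies ergodicity of $f$ once one knows the ergodic components of $f$ are permuted cyclically and accessibility rules out a nontrivial such permutation. The main obstacle, as indicated, is the isotopy-to-the-identity step on $M_1$; everything else is bookkeeping with finite covers and the already-established Theorem A.
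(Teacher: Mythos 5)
Your overall skeleton matches the paper's: pass to an iterate using finiteness of the mapping class group of the turnover, lift to a genuine circle bundle (the unit tangent bundle of a hyperbolic surface finitely covering $\Sigma$, as in \cite{haposh}), show the lift is isotopic to the identity, apply Theorem \ref{maintheorem}, and descend accessibility/ergodicity through the finite cover and the iterate (the descent bookkeeping you describe is standard and fine). However, there is a genuine gap exactly at the step you yourself flag as delicate. Killing the base action and preserving the fiber orientation does \emph{not} imply that $f_1$ acts trivially on $\pi_1(M_1)$, even as an outer automorphism: a fiber-preserving diffeomorphism of a circle bundle over a hyperbolic surface which covers the identity on the base can still be a product of vertical Dehn twists (twists along vertical tori over simple closed curves in the base), and these are nontrivial, infinite-order elements of the mapping class group which act nontrivially on $\pi_1(M_1)$ by multiplying crossing loops by powers of the central fiber class. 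So Waldhausen-type rigidity, which only upgrades ``trivial outer automorphism'' to ``isotopic to the identity,'' cannot be invoked until this vertical twisting is ruled out, and nothing in your argument rules it out; in particular finiteness of $\mathrm{MCG}(\Sigma)$ and taking iterates do not obviously kill a class in $H^1(S;\ZZ)\cong\ZZ^{2g}$.

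This is precisely where the paper uses partial hyperbolicity rather than pure $3$-manifold topology: it quotes \cite[Proposition 3.5]{bghp}, which shows that a partially hyperbolic diffeomorphism of a circle bundle over a hyperbolic surface whose base map is isotopic to the identity is itself isotopic to the identity, and the paper even notes that this proof implicitly uses a theorem of Matsumoto to isotope a closed geodesic into a leaf of the branching center-stable foliation, i.e.\ the invariant (branching) foliations are what exclude the vertical twisting. To repair your argument you would either have to import that result (or reprove it), or give an independent argument that the vertical twisting class of $f_1$ is trivial up to a further iterate --- for instance by exploiting equivariance with respect to the deck group of the cover $S\to\Sigma$ and the fact that the invariant part of $H^1(S;\QQ)$ under a triangle-group action vanishes --- but some such additional input is needed; as written the ``acts trivially on $\pi_1(M_1)$'' assertion is unjustified and is the crux of the theorem.
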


\section{Preliminaries}\label{section.preliminaries}

\subsection{Partial hyperbolicity}

We will denote by $\diff^{r}(M)$ the set of $C^{r}$ diffeomorphisms of $M$, and by $\diff^{r}_{m}(M)$ the set of conservative $C^{r}$ diffeomorphisms of $M$.
\begin{definition} A diffeomorphism $f\in\diff^{1}(M)$ of a closed manifold $M$ is {\em partially
hyperbolic} if the tangent bundle $TM$ of $M$, splits into three invariant sub-bundles: 
$TM=E^{s}\oplus E^{c}\oplus E^{u}$
such that  all unit vectors
$v^\s\in E^\s_x$ ($\s= s, c, u$) with $x\in M$ satisfy  :
\begin{equation}\label{pointwise.ph}
 \|T_xfv^s\| < \|T_xfv^c\| < \|T_xfv^u\| 
\end{equation}
for some suitable Riemannian metric. The {\em stable bundle} $E^{s}$ must also satisfy
$\|Tf|_{E^s}\| < 1$ and the {\em unstable bundle}, $\|Tf^{-1}|_{E^u}\| < 1$. The bundle $E^{c}$ is called the {\em center bundle}.

We write $E^{cu}=E^c\oplus E^u$ and $E^{cs}=E^c\oplus E^s$

\end{definition}

\begin{remark}\label{remark.orientability}
    In proving accessibility or ergodicity,
    we may freely lift $f$ to a finite covering or replace it by an
    iterate.
    Therefore, we assume throughout the paper that all of the
    invariant bundles are oriented and that $f$ preserves these orientations.
\end{remark}

We assume from now on that $f$ is partially hyperbolic on a three dimensional manifold.
It is a well-known fact that the {\em strong} bundles, $E^s$ and $E^u$, are
uniquely integrable \cite{bp,hps}.  That is, there are invariant strong
foliations ${\mathcal W}^{s}(f)$ and ${\mathcal W}^{u}(f)$ tangent,
respectively,  to the invariant bundles $E^{s}$ and $E^{u}$.  The
integrability of $E^{c}$ is a more delicate matter.  We call
${\mathcal W}^{\sigma}(f)$  any foliation tangent to $E^\sigma$, $\sigma=s,u,
c, cs, cu$, whenever it exists and $W^\sigma_f(x)$ the leaf of ${\mathcal
W}^{\sigma}(f)$ passing through $x$. A subset $\Lambda$ is {\em
$\sigma$-saturated} if $W^\sigma_f(x) \subset \Lambda$
for every $x\in \Lambda$. A closed
$\sigma$-saturated subset $K$ is {\em minimal} if
$\overline{W^\sigma_f(x)}=\Lambda$ for every $x\in \Lambda$.
We say that a
foliation is minimal if $M$ is a minimal set for it; that is, every leaf is
dense.
An invariant foliation is $f$-\emph{minimal}
if the orbit of every leaf is dense in $M$.

For each $x\in M$, the {\em accessibility class} $AC(x)$ of $x$ is the
set of points $y\in M$ that can be joined to $x$ by a path that is piecewise
tangent to $E^{s}\cup E^{u}$.
Define
\begin{equation}\label{lamination}
 \Gamma(f)=\{x: AC(x)\text{ is not open }\}
\end{equation}
Then $\Gamma(f)$ is a closed invariant set laminated by accessibility classes. The lamination  $\Gamma(f)$ is $C^{1}$ \cite{ha, hhu}.
If $f$ has a single accessibility class $AC(x) = M$, then we
say that $f$ is \emph{accessible}.

\begin{proposition}\cite[Proposition 3.4]{rru_nilman}\label{sublamination} If $\Lambda\subset\Gamma(f)$ is $f$-invariant and $us$-saturated, and $\varnothing\ne \Lambda\ne M$, then $\Lambda\cap\per(f)\ne\varnothing$. Moreover, the boundary leaves of $\Lambda$ have Anosov dynamics and dense periodic points with the intrinsic topology.
\end{proposition}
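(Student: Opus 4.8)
The plan is to describe $\Lambda$ through its complementary regions and then to use that $f$ is conservative. Since $\Lambda$ is a proper, nonempty, closed, $f$-invariant, $us$-saturated subset of the $C^{1}$ lamination $\Gamma(f)$, whose leaves are $C^{1}$ surfaces tangent to $E^{us}:=E^{s}\oplus E^{u}$, the complement $M\setminus\Lambda$ is a nonempty open $f$-invariant $us$-saturated set; and, as $\W^{s}$- and $\W^{u}$-leaves are connected, every connected component $R$ of $M\setminus\Lambda$ — a \emph{complementary region} — is again $us$-saturated, so $f$ permutes these components. Because $M$ is connected and $\Lambda\ne\varnothing$, the frontier $\partial R$ of any such $R$ is a nonempty $us$-saturated subset of $\Lambda$, hence a nonempty union of leaves of $\Gamma(f)$; these are the \emph{boundary leaves} of $\Lambda$, each of them isolated, on the $R$-side, from the rest of $\Lambda$. (By Remark~\ref{remark.orientability} we assume throughout that $E^{s},E^{u},E^{c}$ are oriented and preserved by $f$.)

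The core of the argument is to show that every boundary leaf is a \emph{periodic compact} surface, and this is where conservativity enters. Fix a boundary leaf $L\subset\partial R$. The sets $\{f^{n}(R)\}_{n\in\ZZ}$ are pairwise disjoint or equal complementary regions, all of the same positive volume, so $m(M)<\infty$ forces $f^{p}(R)=R$ for some $p\ge1$. Using the local product structure of the $C^{1}$ lamination $\Gamma(f)$ near $\partial R$, with the center direction $E^{c}$ (transverse to the leaves) as transverse parameter, one checks that $\partial R$ has a one-sided collar inside $\overline{R}$ and consists of finitely many compact leaves — equivalently, that the metric completion of $R$ is a compact manifold with boundary. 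Then $f^{p}$ permutes this finite set, whence $f^{pq}(L)=L$ for some $q\ge1$, and $L$ is compact. Thus every boundary leaf of $\Lambda$ is a periodic compact surface.

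It remains to read off the dynamics on such a leaf. Let $f^{k}(L)=L$ and put $g:=f^{k}|_{L}$. As $L$ is tangent to $E^{us}$ and the sub-bundles $E^{s}|_{L}$, $E^{u}|_{L}$ of $TL$ are $Dg$-invariant with $E^{s}$ uniformly contracted and $E^{u}$ uniformly expanded (because $\|Tf|_{E^{s}}\|<1$ and $\|Tf^{-1}|_{E^{u}}\|<1$), the splitting $TL=E^{s}|_{L}\oplus E^{u}|_{L}$ is hyperbolic for $g$; moreover the strong foliations $\W^{s}$ and $\W^{u}$ restrict to the stable and unstable foliations of $g$ on $L$ (every $\W^{s}$- or $\W^{u}$-leaf through a point of $L$ lies in $L$, since $L$ is $us$-saturated). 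Hence $g$ is an Anosov diffeomorphism of the compact surface $L$; by the classification of closed surfaces admitting Anosov diffeomorphisms, together with the orientability of $E^{us}$, $L$ is a $2$-torus, and $g$ is topologically conjugate to a linear hyperbolic automorphism, so its periodic points are dense in $L$ in the intrinsic topology. Finally, a periodic point of $g$ is a periodic point of $f$ lying in $L\subset\Lambda$, so $\Lambda\cap\per(f)\ne\varnothing$, and the proof is complete.

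The step I expect to be the main obstacle is the middle one: controlling the complementary regions. What must be ruled out is ``Denjoy-type'' behaviour — an exceptional minimal sublamination of $\Gamma(f)$ whose boundary leaves are noncompact and wander under $f$. Conservativity is precisely what forbids this, since an infinite orbit of complementary regions of equal positive measure cannot be packed into the finite-measure manifold $M$; but turning this heuristic into the clean statements that each complementary region has a compact completion and only finitely many boundary leaves requires a careful local analysis of the $C^{1}$ lamination $\Gamma(f)$ along $\partial\Lambda$, and that is where the real work lies.
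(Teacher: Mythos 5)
The first half of your argument (complementary regions are $us$-saturated, are permuted by $f$, and are periodic because they have equal positive volume and $M$ has finite volume) is indeed part of the standard approach. But the pivotal middle step --- that a periodic complementary region has compact metric completion, so that $\partial R$ consists of finitely many \emph{compact} leaves, each therefore a torus carrying a genuine Anosov diffeomorphism --- is not merely unproven (you yourself flag it as ``where the real work lies''); it is false, and finite volume cannot deliver it. A complementary region of a lamination can be periodic, of finite volume, and still have non-compact boundary leaves (planes or cylinders): conservativity forbids wandering of the regions, not non-compactness of their boundary. The wording of the statement, ``Anosov dynamics and dense periodic points \emph{with the intrinsic topology}'', is chosen precisely because the boundary leaves are in general non-compact, and ``Anosov dynamics'' only means that the return map of the (periodic) leaf preserves and uniformly contracts/expands the restricted foliations $\mathcal{W}^s$ and $\mathcal{W}^u$; density of periodic points is then a genuine assertion, not a consequence of conjugacy to a linear automorphism. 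In fact this very paper applies the proposition to laminations all of whose leaves are non-compact: in Proposition \ref{ushorizontal} the set $V(f)$ of vertical leaves has only cylinder leaves, and the conclusion ``a periodic leaf with Anosov dynamics and dense periodic points'' is then refuted by Lemma \ref{lemma.cylinder}. Moreover, if every boundary leaf were a compact torus, Theorem \ref{anosov.tori} would force $M$ to be the $3$-torus or one of two mapping tori, so your stronger conclusion would make the proposition vacuous exactly on the Seifert manifolds where it is needed.

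Note also that the paper does not reprove this statement: it quotes it from \cite[Proposition 3.4]{rru_nilman}. The argument there does not go through compactness of leaves at all; it produces periodic points \emph{inside} the boundary leaves directly, by combining Poincar\'e recurrence (this is where volume preservation enters, beyond the periodicity of the complementary regions) with a closing-lemma-type argument transverse to the lamination: a boundary leaf has a definite center ``gap'' on the side of the adjacent complementary region, recurrence returns a $us$-plaque close to itself with controlled center displacement, and hyperbolicity along the leaf closes this to a periodic point --- very much in the spirit of the proof of Lemma \ref{completeness} in this paper. Running the argument near every plaque gives density of periodic points in the intrinsic topology, and the leaf itself is periodic because $f$ permutes accessibility classes and fixes one containing a periodic point. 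Your route --- forcing compactness and then invoking the classification of Anosov diffeomorphisms of closed surfaces --- cannot be repaired, because the compactness it needs is false in the generality of the statement.
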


The proof of the following proposition can be found in Lemma 3.3 and Remark 3.4 of \cite{hhu2}.

\begin{proposition}\cite{hhu2} \label{acc.transitivity} If $\Omega(f)=M$  and there exists $x$ such that the orbit of $AC(x)$ is dense, then $f$ is transitive. 
\end{proposition}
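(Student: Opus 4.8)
The plan is to reduce the statement to topological transitivity and then to exploit the non-wandering hypothesis to transport open sets along accessibility classes. First I would note that, since $M$ is a manifold (hence a Baire space with no isolated points), it suffices to prove that $f$ is topologically transitive, i.e.\ that for every pair of nonempty open sets $U,V\subset M$ there is $N\in\ZZ$ with $f^{N}(U)\cap V\ne\varnothing$; a standard Baire category argument then produces a residual set of points with dense orbit.

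The heart of the matter is the following transport lemma: \emph{if $a$ and $b$ lie in the same accessibility class, then for all nonempty open $U\ni a$ and $V\ni b$ there is $N\in\ZZ$ with $f^{N}(U)\cap V\ne\varnothing$.} I would prove this first for a single leg of a $us$-path. Suppose $a\in W^{s}_{f}(b)$. The hypothesis $\Omega(f)=M$ forces every point to have arbitrarily large return times to each of its neighborhoods: if the return times to some neighborhood $W$ of $b$ were bounded by $M_{0}$, then, choosing non-wandering points successively in $W$, $W\cap f^{-m_{1}}(W)$, $W\cap f^{-m_{1}}(W)\cap f^{-(m_{1}+m_{2})}(W)$, \dots, one produces return times $m_{1}<m_{1}+m_{2}<\cdots$, a contradiction. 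So I fix $\varepsilon>0$ small (relative to $U$ and $V$) and pick, with $m$ as large as desired, a point $y$ with $d(y,b)<\varepsilon$ and $d(f^{m}(y),b)<\varepsilon$. By continuity of $\mathcal W^{s}$, the point $a'$ on $W^{s}_{f}(y)$ that tracks $a$ lies in $U$, and by the uniform contraction $d_{\mathcal W^{s}}(f^{m}a',f^{m}y)\le C\lambda^{m}d_{\mathcal W^{s}}(a',y)\to 0$; hence $f^{m}(a')$ is within $2\varepsilon$ of $b$, so $f^{m}(a')\in V$ and $f^{m}(U)\cap V\ne\varnothing$. For $a\in W^{u}_{f}(b)$ the same argument applied to $f^{-1}$ (note $\Omega(f^{-1})=M$) gives $f^{-m}(U)\cap V\ne\varnothing$. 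I would then handle a general $us$-path by chaining these single-leg transports, using the continuity and the holonomies of $\mathcal W^{s}$ and $\mathcal W^{u}$ to push an entire open neighborhood of one vertex of the path to an open neighborhood of the next.

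Granting the transport lemma, I would derive topological transitivity as follows. Let $A=AC(x)$, and let $U,V$ be nonempty open. Since the orbit of $A$ is dense, there is $i$ with $f^{i}(A)\cap U\ne\varnothing$; pick $u\in U$ with $AC(u)=f^{i}(A)$. Since the orbit of $f^{i}(A)$ is again dense, there is $k$ with $f^{k}\big(f^{i}(A)\big)\cap V\ne\varnothing$, i.e.\ there is $a\in f^{i}(A)=AC(u)$ with $f^{k}(a)\in V$. Now $u$ and $a$ lie in the same accessibility class, so the transport lemma applied to the neighborhoods $U\ni u$ and $f^{-k}(V)\ni a$ gives $N$ with $f^{N}(U)\cap f^{-k}(V)\ne\varnothing$, that is, $f^{N+k}(U)\cap V\ne\varnothing$. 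Hence $f$ is topologically transitive, and therefore transitive.

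The step I expect to be the main obstacle is the transport lemma, and within it two points: showing that $\Omega(f)=M$ forces arbitrarily large return times (this is precisely where the non-wandering hypothesis is used in an essential way), and carrying out the chaining of single-leg transports along a $us$-path, where one must move an entire open set, not just a point, from one vertex of the path to the next using the continuity and holonomy of the strong foliations.
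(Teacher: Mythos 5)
Your argument is correct, and it is essentially the argument behind this statement, which the paper itself does not prove but merely cites (Lemma 3.3 and Remark 3.4 of \cite{hhu2}): non-wandering gives arbitrarily large return times near the base point, uniform contraction along the strong leaf then transports a point of $U$ into $V$, and minim\-al/dense orbits of the accessibility class finish the proof via a Baire argument. The only delicate point is the one you flag: naive composition of single-leg transports fails because the transported open set $f^{N_1}(U)\cap B(z_1,\delta_1)$ need not contain the vertex $z_1$; your fix is the right one, namely to take a point $w$ of that set close to $z_1$, use continuity of $\mathcal W^{s}$, $\mathcal W^{u}$ on compact leaf arcs to find $z_2'\in W^{s/u}_f(w)$ near $z_2$, and apply the single-leg transport to the pair $(w,z_2')$. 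Equivalently, and perhaps more cleanly, one can package the transport lemma as the statement that for every open $U$ the closed invariant set $\overline{\bigcup_{n\in\ZZ}f^{n}(U)}$ is $s$- and $u$-saturated, hence saturated by accessibility classes, which yields the proposition at once from the density of the orbit of $AC(x)$; this is the form in which the cited reference argues.
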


\begin{theorem} \cite{hhu2} \label{integrable} If $\Omega(f)=M$ and $\per(f)=\varnothing$, then $E^{c}$ is uniquely integrable. 
\end{theorem}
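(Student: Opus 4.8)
The plan is to reduce unique integrability of $E^{c}$ to the absence of genuine branching in the Burago--Ivanov branching foliations, and then to rule out branching using the two hypotheses. By Remark~\ref{remark.orientability} we may assume $E^{s},E^{c},E^{u}$ are oriented and preserved by $f$, so the theorem of Burago and Ivanov (in the three-dimensional, orientable setting) produces $f$-invariant branching foliations $\mathcal{W}^{cs}$ and $\mathcal{W}^{cu}$ tangent to $E^{cs}$ and $E^{cu}$. If these are both genuine (non-branching) $C^{0}$ foliations, then $f$ is dynamically coherent and, since $W^{cs}_{f}(x)$ and $W^{cu}_{f}(x)$ are transverse surfaces, their intersection is a one-dimensional foliation tangent to $E^{c}$ which contains every integral curve of $E^{c}$; this already gives unique integrability. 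So it suffices to show that neither $\mathcal{W}^{cs}$ nor $\mathcal{W}^{cu}$ genuinely branches.

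Suppose, for a contradiction, that $\mathcal{W}^{cs}$ branches (the case of $\mathcal{W}^{cu}$ is symmetric). The branching locus $D$ --- the set of points through which two distinct $\mathcal{W}^{cs}$-leaves pass --- is then a nonempty, closed, $f$-invariant set, and since the strong stable foliation is genuine and contained in the $\mathcal{W}^{cs}$-leaves, $D$ is $s$-saturated. I would next run a Poincar\'e--Bendixson--type argument inside center-stable leaves: within a leaf $L$ of $\mathcal{W}^{cs}$ the center bundle restricts to a continuous line field transverse to the one-dimensional foliation $\mathcal{W}^{s}|_{L}$, and the latter is \emph{uniformly contracted} by $f$. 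A uniformly contracted transverse foliation leaves no room for Denjoy-like or spiralling recurrence of the center direction inside $L$; combined with $\Omega(f)=M$ (which forces the recurrence present in $D$ to be genuine), this should force the boundary leaves of the regions cut out by the branching to contain a compact center circle $\gamma$ with $f^{k}(\gamma)=\gamma$ for some $k\ge 1$. Alternatively, one may try to show that a suitable saturation of $D$ is a $us$-saturated $f$-invariant proper subset of $\Gamma(f)$ and invoke Proposition~\ref{sublamination} to obtain a periodic point directly.

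Granting a compact periodic center circle $\gamma$, the contradiction is reached as follows. The first-return map $g=f^{k}|_{\gamma}$ is a circle homeomorphism. If $g$ has a periodic orbit, then $f$ has a periodic point, contradicting $\per(f)=\varnothing$. Otherwise $g$ has irrational rotation number, so every $g$-orbit is recurrent on $\gamma$. Pick any $q\in W^{s}_{f}(\gamma)\setminus\gamma$. Under forward iteration by $f^{k}$ the stable distance of $f^{nk}(q)$ to $\gamma$ tends to $0$, while $q$ itself remains at a fixed positive stable distance from $\gamma$; hence a small neighbourhood of $q$ is eventually mapped by $f^{nk}$ into a thin strip hugging $\gamma$ and can never meet itself again, so $q$ is wandering, contradicting $\Omega(f)=M$. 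Either way the assumption that $\mathcal{W}^{cs}$ branches is absurd, and the same argument applies to $\mathcal{W}^{cu}$; hence $f$ is dynamically coherent and $E^{c}$ is uniquely integrable.

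The main obstacle is the second step: making the Poincar\'e--Bendixson / surface-dynamics argument inside the possibly branching center-stable and center-unstable leaves fully rigorous --- understanding the topology of the branching locus $D$ and of the complementary regions its boundary leaves bound, and extracting an honest \emph{compact periodic} center circle (or a periodic point) rather than a merely recurrent configuration. The bookkeeping of iterates needed to make $\gamma$ exactly periodic, and the careful translation between the branching picture and genuine curves tangent to $E^{c}$, is where the real work lies; the first and third steps are comparatively soft.
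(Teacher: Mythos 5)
This statement is quoted from \cite{hhu2} and the paper gives no proof of it, so the comparison must be with the argument in that reference; but independently of that, your proposal has a fatal gap at its final step. The contradiction you want to extract from a compact periodic center circle $\gamma$ with irrational rotation number simply is not there: such circles are fully compatible with $\Omega(f)=M$ and $\per(f)=\varnothing$. The model example is $A\times R_\alpha$ on $\mathbb{T}^3$ ($A$ a hyperbolic automorphism of $\mathbb{T}^2$, $\alpha$ irrational), which is volume preserving, ergodic, has no periodic points, and has every center circle compact and invariant with irrational rotation number --- indeed, the present paper devotes Theorem \ref{noperiodicpoints} precisely to systems with a compact periodic center leaf and no periodic points. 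Your wandering argument fails because you only track the stable collapse of a neighbourhood of $q\in W^s_f(\gamma)\setminus\gamma$ onto a strip around $\gamma$: that neighbourhood is three-dimensional, it is being stretched in the unstable direction, and under the nonwandering (a fortiori conservative) hypothesis its forward images must return to meet it. Convergence of the orbit of $q$ to $\gamma$ does not make $q$ wandering. Since the target of your step 2 is exactly this periodic circle, the whole reduction collapses: you must derive a genuine periodic \emph{point} from the failure of unique integrability, which is what \cite{hhu2} actually does --- the branching of center curves through a single point creates an invariant wedge/region inside a center-stable (or center-unstable) leaf, and a Poincar\'e--Bendixson-type argument combined with recurrence from $\Omega(f)=M$ and the uniform contraction of $E^s$ produces a fixed point of some iterate inside that region.

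Two further, smaller issues. First, your step 1 conflates non-branching of the Burago--Ivanov branching foliations with unique integrability of $E^{cs}$, $E^{cu}$, and hence of $E^c$; dynamical coherence (existence of true invariant foliations tangent to $E^{cs}$ and $E^{cu}$) does not by itself give unique integrability, which is the stronger conclusion claimed in the theorem, so even the reduction needs care. Second, your proposed alternative of feeding the branching locus $D$ into Proposition \ref{sublamination} does not work as stated: $D$ is $s$-saturated but there is no reason for it to be $u$-saturated or to sit inside the accessibility lamination $\Gamma(f)$, which is what that proposition requires.
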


Let us state some results which will be needed in Section \ref{section.horizontal}. A consequence of \cite[Theorem 1.1]{hhu3} is:
\begin{theorem}\cite{hhu3}\label{anosov.tori} If  $\Gamma(f)$ has a compact leaf, then $M$ is one of the following 3-manifolds:
\begin{enumerate}
 \item the 3-torus
 \item the mapping torus of $-id:\T^{2}\to\T^{2}$
 \item the mapping torus of a hyperbolic automorphism on $\T^{2}$
\end{enumerate}
\end{theorem}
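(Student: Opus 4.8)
The plan is to realize a compact leaf of $\Gamma(f)$ as an embedded incompressible $2$-torus tangent to $E^{u}\oplus E^{s}$ which is periodic with hyperbolic return dynamics, to show that $M$ then fibers over the circle with this torus as fiber, and finally to read off the three cases from the constraints that the partially hyperbolic structure imposes on the monodromy.

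First I would fix a compact leaf $L$ of the lamination $\Gamma(f)$. By construction $L$ is an accessibility class, hence saturated by $\cW^{u}(f)$ and $\cW^{s}(f)$ and tangent to $E^{u}\oplus E^{s}$. The bundles $E^{u}|_{L}$ and $E^{s}|_{L}$ are two transverse line fields on the closed surface $L$, so $\chi(L)=0$; using the orientability reductions of Remark \ref{remark.orientability}, $L$ is a $2$-torus $T$. To obtain periodicity, note that its $f$-orbit $\Lambda=\bigcup_{n}f^{n}(T)$ is $f$-invariant and $us$-saturated, so provided $\Lambda\ne M$ Proposition \ref{sublamination} gives $\Lambda\cap\per(f)\ne\varnothing$; since $f$ permutes accessibility classes and the class of a periodic point is one of the compact tori $f^{n}(T)$, this forces $f^{N}(T)=T$ for some $N$. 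The degenerate possibility that the orbit of $T$ is dense, so $\Lambda=M$, I would exclude by Kneser--Haken finiteness: the $f^{n}(T)$ are pairwise disjoint incompressible tori, so only finitely many can be pairwise non-parallel, and partial hyperbolicity prevents the return map near an accumulation torus from being trivial in the $u,s$ directions. The return map $g=f^{N}|_{T}$ then preserves the two transverse one-dimensional foliations $\cW^{u}(f)\cap T$ and $\cW^{s}(f)\cap T$, uniformly expanding the first and contracting the second, so $g$ is Anosov and $g_{*}\in GL(2,\ZZ)$ is hyperbolic; this is the "hyperbolic dynamics" on the torus.

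The heart of the argument is to promote the single periodic torus $T$ to a fibration $M\to\SS^{1}$ with fiber $T$. Here I would use that $E^{c}$ is a line field everywhere transverse to $T$, oriented after Remark \ref{remark.orientability}. Integrating a vector field spanning $E^{c}$ gives a flow transverse to $T$, and I would argue that $T$ is a global cross-section to this flow, so that $M$ is the mapping torus of the first-return map $\phi\colon T\to T$, an element of $\mathrm{MCG}(\T^{2})=GL(2,\ZZ)$. This is the step I expect to be the main obstacle, because producing a genuine cross-section (equivalently, a nowhere-zero closed $1$-form transverse to $T$, to which Tischler's theorem applies) requires controlling the center direction globally and not merely near $T$; incompressibility of $T$ together with the taut structure coming from the two transverse invariant foliations is what makes this possible, and this is exactly the content borrowed from \cite{hhu3}.

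Finally I would classify the monodromy. Because the two-dimensional bundles $E^{c}\oplus E^{s}$ and $E^{c}\oplus E^{u}$ are invariant, flowing along $E^{c}$ preserves the traces $\cW^{s}(f)\cap T$ and $\cW^{u}(f)\cap T$; hence $\phi$ fixes the two transverse foliation directions on $T$, so $\phi_{*}\in GL(2,\ZZ)$ is diagonalizable over $\RR$ with the two foliation slopes as eigendirections, and orientability of $M$ forces $\det\phi_{*}=+1$. A matrix in $GL(2,\ZZ)$ that is $\RR$-diagonalizable with determinant $+1$ is exactly one of: the identity, whose mapping torus is $\T^{3}$; minus the identity, whose mapping torus is the one of $-\mathrm{id}$; or a matrix with real eigenvalues $\lambda,\lambda^{-1}$, $|\lambda|\ne 1$, i.e.\ a hyperbolic automorphism. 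These are precisely the three manifolds in the statement. The parabolic (shear) and finite-order rotation classes are automatically excluded, since they have, respectively, a single eigendirection and no real eigendirection, and so cannot preserve two transverse foliations; this is why no Nil-geometry torus bundle occurs.
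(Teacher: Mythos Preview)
Your strategy diverges substantially from the paper's. The paper does not attempt a self-contained proof: it observes that a compact leaf of $\Gamma(f)$ is a $2$-torus, considers the set $\Lambda$ of \emph{all} points lying in compact leaves of $\Gamma(f)$ (closed by Haefliger's theorem, and $f$-invariant), and then splits into two cases. If $\Lambda=M$ the manifold is foliated by $2$-tori and the conclusion is immediate; if $\Lambda\ne M$, Proposition~\ref{sublamination} produces a periodic torus with Anosov return map, i.e.\ $M$ admits an \emph{Anosov torus}. Since a $3$-manifold supporting partial hyperbolicity is irreducible, one is exactly in the hypotheses of \cite[Theorem 1.1]{hhu3}, which is quoted as a black box and yields the three manifolds. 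There is no attempt to fiber $M$ over $S^{1}$ or to analyse a monodromy.

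Your outline, by contrast, tries to reprove the content of \cite{hhu3} directly, and two of the steps do not go through as written. First, in your periodicity argument you apply Proposition~\ref{sublamination} to $\Lambda=\bigcup_{n}f^{n}(T)$, but this set need not be closed, and its closure may well contain non-compact $us$-leaves; the Kneser--Haken patch is not enough, since you have not yet shown $T$ is incompressible and in any case the finiteness of non-parallel tori does not by itself force periodicity of a specific $T$. The paper sidesteps this entirely by working with the set of all compact leaves and invoking Haefliger. Second, and more seriously, your final step is incorrect: the first-return map $\phi$ of a flow tangent to $E^{c}$ has no reason to preserve the traces $\cW^{s}\cap T$ and $\cW^{u}\cap T$. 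The $Df$-invariance of $E^{cs}$ and $E^{cu}$ says nothing about invariance under an arbitrary flow along $E^{c}$; even when $f$ is dynamically coherent, center holonomy between $us$-plaques need not respect the $s$ and $u$ subfoliations. So you cannot conclude that $\phi_{*}$ is $\RR$-diagonalizable with the foliation slopes as eigendirections, and the exclusion of parabolic monodromy (hence of Nil torus bundles) is left unjustified. This exclusion is precisely the non-trivial $3$-manifold topology carried out in \cite{hhu3}, and it does not proceed via a center cross-section argument.
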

Indeed, the hypothesis in \cite[Theorem 1.1]{hhu3} is that $M$ is an irreducible manifold admitting an {\em Anosov torus}. A manifold $M$ admits an Anosov torus if there exists an invariant $2$-torus with a hyperbolic dynamics which can be extended to a diffeomorphism in the ambient manifold $M$. Observe that any compact leaf of $\Gamma(f)$ must be a 2-torus, since the leaves of $\Gamma(f)$ are foliated by the one-dimensional unstable foliation. Take the set $\Lambda$ of points in $\Gamma(f)$ belonging to compact leaves. By \cite{haefliger}, $\Lambda$ is closed. $\Lambda$ is also invariant. If $\Lambda=M$, then $M$ is foliated by 2-tori, and the result above trivially follows. If $\Lambda\ne M$, then Proposition \ref{sublamination} implies there exists a periodic torus with Anosov dynamics, hence $M$ admits an Anosov torus. On the other hand, if $M$ admits a partially hyperbolic dynamics, then $M$ is irreducible \cite[Theorem 1.3]{hhu3}.

\subsection{Circle bundles over hyperbolic surfaces}

Although our results are formulated for Seifert manifolds with hyperbolic orbifold, we do not lose generality if we assume that $M$ is a circle bundle over a surface with negative Euler characteristic, see Remark \ref{remark.orientability}. Indeed, it is well known that every Seifert manifold with hyperbolic orbifold is finitely covered by a circle bundle. See \cite[\S 7]{haposh} and references therein.

A circle bundle is a manifold $M$ having a smooth map $p:M\to \Sigma$ such that for every $x\in \Sigma$, the fiber $p^{-1}(\{x\})$ is a circle and there is an open set $U$, $x\in U$, such that $p^{-1}(U)$ is diffeomorphic to $U\times  S^1$ via a fiber preserving diffeomorphism.

A leaf of a foliation $\mathcal F$ is \emph{vertical} if it contains every fiber it intersects. A leaf $L$  of $\mathcal F$ is \emph{horizontal} if it is transverse to the fibers. We say that $\mathcal F$ is horizontal if all its leaves are horizontal. 

\begin{theorem}[Brittenham-Thurston \cite{br}]
Let $\mathcal F$ be a foliation without torus leaves in a Seifert space $M$. Then, there is an isotopy $\psi_t:M\to M$ from the identity such that the foliation $\psi_1(\mathcal F)$ satisfies that every leaf is either everywhere transverse to the fibers (horizontal) or saturated by fibers (vertical).
\end{theorem}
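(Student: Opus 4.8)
The plan is to reduce to the case of an honest circle bundle over a closed hyperbolic surface, put a system of vertical tori into good position with respect to $\cF$, and then analyze $\cF$ one elementary piece at a time; this is the strategy of Brittenham's proof of the analogous statement for essential laminations. First I would record that $\cF$ is taut: since $\cF$ has no torus leaves, Novikov's theorem shows it has no Reeb components, hence is Reebless, and since $M$ is irreducible and $\cF$ has neither sphere nor torus leaves, $\cF$ is in fact taut. Passing to a finite cover, pulling $\cF$ back, and using Remark~\ref{remark.orientability}, I would assume that $p:M\to\Sigma$ is a circle bundle over a closed orientable surface with $\chi(\Sigma)<0$ and that $\cF$ is coorientable; the general Seifert case will follow by the same argument with a fibered solid torus as the local model near each exceptional fiber, together with a standard equivariance argument to push the normal form back down from the cover.

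Next I would fix disjoint essential simple closed curves $\ga_1,\dots,\ga_k$ cutting $\Sigma$ into pairs of pants and consider the incompressible vertical tori $T_i=p^{-1}(\ga_i)$. By the Roussarie--Thurston theorem, an incompressible surface in a manifold carrying a taut foliation is isotopic either to a leaf or, after cancelling all center tangencies, to a surface transverse to the foliation; since $\cF$ has no torus leaves and on a transverse torus $\chi(T^2)=0$ also forbids saddle tangencies (the indices of the tangencies would sum to a negative integer), each $T_i$ is isotopic to a torus transverse to $\cF$, and by Waldhausen's classification of incompressible surfaces in Seifert spaces --- horizontal tori being impossible over a hyperbolic base --- this torus can be made vertical again. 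Cutting along $V=T_1\cup\dots\cup T_k$ then yields pieces $M_j\cong P_j\times S^1$ with $P_j$ a pair of pants, on which $\cF$ is transverse to all three boundary tori.

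Then, on each $P\times S^1$, I would argue that a taut foliation transverse to the boundary is either everywhere transverse to the $S^1$-fibers, in which case every leaf of that piece is horizontal, or else a Poincar\'e--Bendixson / fiberwise-holonomy argument produces a leaf saturated by fibers, which must be a vertical annulus since there are no torus leaves. Collecting all such vertical leaves over all pieces gives a sublamination $\cV\subset\cF$ saturated by fibers; isotoping it to be genuinely vertical over a graph in $\Sigma$ and leaving the complement transverse to every fiber puts $\cF$ into the desired form, with every leaf horizontal or vertical. An alternative for these last two steps is to pass to the universal cover $\widetilde M\cong\mathbb{H}^2\times\RR$, where $\widetilde\cF$ is a Reebless foliation by planes with a simply connected (possibly non-Hausdorff) leaf space acted on by $\pi_1(M)$; the fixed points of the central $\ZZ$ generated by the fiber correspond to vertical leaves downstairs and the points it moves to horizontal ones.

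The step I expect to be the main obstacle is making the two isotopies of the second step --- pushing $T_i$ transverse to $\cF$, then re-verticalizing --- \emph{simultaneously} and compatibly with the Seifert fibration, so that at the end $V$ is vertical \emph{and} transverse to $\cF$ and the complementary pieces are genuinely fibered $P\times S^1$'s with controlled boundary behaviour. Equivalently one wants a good-position theorem relative to the fibration, which is precisely the technical heart of Brittenham's argument; a secondary difficulty is the local analysis around exceptional fibers and the equivariance needed to descend from the finite cover.
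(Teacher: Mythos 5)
You should first note that the paper does not prove this statement at all: it is quoted as Brittenham's theorem from \cite{br} and used as a black box, so the only meaningful benchmark is Brittenham's own argument. Measured against that, your outline assembles sensible background facts (no torus leaves gives Reeblessness by Novikov and tautness by Goodman; Roussarie--Thurston general position plus the index count on a torus with $\chi=0$; Waldhausen's vertical/horizontal alternative for incompressible surfaces over a hyperbolic base), but it leaves unproved exactly the points that constitute the theorem. The two isotopies you apply to each $T_i$ --- first pushing it transverse to $\cF$, then re-verticalizing --- are taken with respect to two different structures (the foliation and the fibration), and nothing in your sketch shows they can be performed compatibly; as you yourself concede, this ``good position relative to the fibration'' is the technical heart of Brittenham's paper, so labelling it ``the main obstacle'' amounts to assuming the result rather than proving it.

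The later steps have the same character. On a piece $P\times S^1$ with $\cF$ transverse to the boundary tori there is no quick dichotomy ``either transverse to every fiber or there is a fiber-saturated leaf'': a single leaf can be tangent to some fibers and transverse to others, and a Poincar\'e--Bendixson or holonomy argument does not by itself produce a vertical annulus leaf, nor does it show that the union of vertical leaves is a closed sublamination that can be isotoped to be genuinely vertical while the complement is simultaneously made transverse to \emph{all} fibers --- again, that is the content of the theorem, not a reduction of it. The universal-cover alternative suffers from the same defect: a leaf whose lift is invariant under the central deck transformation need not contain any fiber, and a non-invariant leaf need not be transverse to the fibers; converting these homotopical statements into the asserted isotopy is precisely what must be established. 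There are also secondary issues (Roussarie--Thurston and Poincar\'e--Bendixson presuppose smoothness, whereas the invariant foliations to which the theorem is applied in this paper are only leafwise smooth, which is one reason Brittenham works with essential laminations and a combinatorial normal-position argument with respect to a decomposition into fibered solid tori). In short, your proposal is a reasonable road map that correctly identifies the relevant tools, but at every decisive juncture it defers to exactly the difficulty the cited theorem resolves, so it does not constitute a proof.
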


Of course, if is possible to work inversely and apply the isotopy to $p$ in order to have that $\mathcal F$ has the property that every leaf is vertical or horizontal with respect to $p\circ \psi_1$.

For more details see \cite[Section 2]{haposh}.
%
%

\section{No periodic points}\label{section.noperiodicpoints}

In this section, we prove Theorem \ref{noperiodicpoints}.
Assume throughout the section that $f\in \diff^{2}_{m}(M)$ is partially hyperbolic and has no periodic points.
By assumption, $f$ has a compact periodic center manifold $\gamma$.
By replacing $f$ by a iterate, we may assume $f(\gamma) = \gamma$.
Note by Theorem \ref{integrable} that $f$ is dynamically coherent. 
In this setting, accessibility implies ergodicity
\cite{bw, hhu}, so we further assume that $f$ is not accessible.

\begin{lemma}\label{usdense}
    The lamination $\Ga(f)$ is in fact an $f$-minimal foliation and $f$ is transitive.
\end{lemma}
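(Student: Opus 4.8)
The plan is to exploit a single consequence of Proposition~\ref{sublamination}: because $\per(f)=\varnothing$, that proposition says that any nonempty, closed, $f$-invariant, $us$-saturated subset $\Lambda\subseteq\Gamma(f)$ must coincide with $M$. I would then apply this dichotomy twice: first to $\Gamma(f)$ itself, to get the ``foliation'' claim, and then to the orbit closure of an individual leaf, to get $f$-minimality; transitivity follows formally from Proposition~\ref{acc.transitivity}.

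First I would record two elementary facts: since $f$ is conservative, Poincar\'e recurrence gives $\Omega(f)=M$; and since $f$ is not accessible and $M$ is connected, not every accessibility class can be open, so $\Gamma(f)\neq\varnothing$. Now $\Gamma(f)$ is closed, it is $f$-invariant since $f(AC(x))=AC(f(x))$, and it is $us$-saturated since every point of $W^{s}_{f}(x)\cup W^{u}_{f}(x)$ lies in $AC(x)$. The dichotomy above then forces $\Gamma(f)=M$. As the lamination $\Gamma(f)$ is $C^{1}$ and its leaves are the accessibility classes — now all non-open, hence codimension-one $C^{1}$ submanifolds — a $C^{1}$ lamination with full support is a $C^{1}$ foliation of $M$, which is the first assertion of the lemma.

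Next, to obtain $f$-minimality, I would fix a leaf $L=AC(x)$ and set $\Lambda=\overline{\bigcup_{n\in\Z}f^{n}(L)}$. By construction $\Lambda$ is closed, $f$-invariant, nonempty, and contained in $\Gamma(f)=M$. It is also $us$-saturated: each $f^{n}(L)$ is $us$-saturated because $f$ sends strong stable and unstable leaves to leaves of the same type, unions of $us$-saturated sets are $us$-saturated, and the closure of a $us$-saturated set is again $us$-saturated because the leaves of the strong foliations vary continuously — given $z$ in such a closure and $w\in W^{u}_{f}(z)$, one approximates $z$ by points $z_{k}$ of the saturated set and, using that $W^{u}_{f}$ varies continuously in the $C^{1}$ topology on compact sets, finds $w_{k}\in W^{u}_{f}(z_{k})$ with $w_{k}\to w$; symmetrically for $W^{s}_{f}$. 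The dichotomy then forces $\Lambda=M$, so the orbit of $L$ is dense, and since $L$ was arbitrary $\Gamma(f)$ is $f$-minimal. Transitivity now drops out of Proposition~\ref{acc.transitivity}, since $\Omega(f)=M$ and some accessibility class has dense orbit.

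The argument is short, being essentially two applications of Proposition~\ref{sublamination}. The only places where I would take care are the verification that the orbit closure of an accessibility class is still $us$-saturated — which is precisely what makes Proposition~\ref{sublamination} applicable to it — and the observation that $\Gamma(f)=M$ genuinely promotes the $C^{1}$ lamination to a $C^{1}$ foliation. Neither is a serious obstacle, and I do not expect the proof to require any essentially new idea beyond the cited results.
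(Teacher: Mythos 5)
Your proposal is correct and follows essentially the same route as the paper: the paper's proof also applies Proposition~\ref{sublamination} (with $\per(f)=\varnothing$) to conclude $\Gamma(f)=M$ and that the orbit closure of any $us$-leaf, not being allowed to contain periodic points, must be all of $M$, and then invokes Proposition~\ref{acc.transitivity} for transitivity. Your write-up merely makes explicit the details (Poincar\'e recurrence giving $\Omega(f)=M$, non-accessibility giving $\Gamma(f)\neq\varnothing$, and the $us$-saturation of orbit closures) that the paper leaves implicit.
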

\begin{proof}
By Proposition \ref{sublamination}, $\Gamma(f)=M$ and the orbit of any $us$-leaf is dense. Indeed, if it were not dense its closure would have periodic points. Transitivity then follows from Proposition \ref{acc.transitivity}.
\end{proof}

%
%

For a point $x \in M$, the \emph{center (Lyapunov) exponent}
is given by the limit
$\chi^c(x) = \lim_{N \to \infty} \sum_{n = 0}^{N-1} \log ||Df(f^n(x))|_{E^c}||$.
By Oseledets' Theorem, this limit exists $\mu$-almost everywhere
for any invariant measure $\mu$.

\begin{lemma}\label{centerzero}
For any invariant measure $\mu$,
the center exponent is zero $\mu$-almost everywhere. 
\end{lemma}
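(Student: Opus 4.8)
The plan is to argue by contradiction, using the fact that $f$ has no periodic points together with the $f$-minimality of $\Gamma(f)$ established in Lemma \ref{usdense}. Suppose some invariant measure $\mu$ has $\chi^c > 0$ on a positive measure set. Passing to an ergodic component, we may assume $\mu$ is ergodic with $\chi^c(\mu) > 0$ (the case $\chi^c(\mu) < 0$ is symmetric, replacing $f$ by $f^{-1}$, which swaps $E^s$ and $E^u$ but leaves $E^c$ and the hypotheses intact). Since $\mu$ is ergodic with positive center exponent and positive unstable exponent, Pesin theory gives a hyperbolic measure whose support has, for $\mu$-a.e. $x$, a well-defined Pesin unstable manifold tangent to $E^{cu}$ at $x$; these are the ``fake'' or genuine $cu$-leaves. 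The key point is that positive center exponent forces recurrence of orbits inside a $cu$-manifold to produce, via a standard Pesin/Katok closing-type argument, a periodic point — contradicting $\per(f) = \varnothing$.

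More concretely, I would proceed as follows. First, fix an ergodic $\mu$ with $\chi^c(\mu) > 0$ and a Pesin block $\Lambda$ of positive $\mu$-measure on which the unstable Pesin manifolds $W^{cu}_{loc}(x)$ (tangent to $E^{cu}$) have uniform size and hyperbolicity estimates. By Poincaré recurrence and the Pesin block structure, one finds $x \in \Lambda$ and $n \ge 1$ with $f^n(x)$ extremely close to $x$ and both in $\Lambda$; since $E^{cu}$ is uniformly expanded at the exponential rate coming from $\chi^c(\mu) > 0$ on this block, the Katok closing lemma (or the shadowing/graph-transform argument for non-uniformly hyperbolic systems) produces a genuine periodic point of $f$ near $x$. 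This directly contradicts $\per(f) = \varnothing$, so no ergodic invariant measure can have nonzero center exponent, and hence $\chi^c = 0$ $\mu$-almost everywhere for every invariant $\mu$.

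An alternative, perhaps cleaner route avoids Pesin theory: use the dynamical coherence of $f$ (Theorem \ref{integrable}) so that the center foliation $\Wc$ exists and is invariant, and the compact periodic center leaf $\gamma$ with $f(\gamma) = \gamma$ gives a circle (or point) on which $f$ acts. If $\chi^c$ were nonzero on an invariant measure supported near or related to $\gamma$, the dynamics of $f|_\gamma$ — a circle diffeomorphism — would have a hyperbolic periodic point by the standard theory of circle maps (a fixed point of $f^k|_\gamma$ with derivative $\ne 1$), again contradicting $\per(f) = \varnothing$; and for a general invariant measure one integrates $\log\|Df|_{E^c}\|$ against $\mu$ and uses that $\int \chi^c \, d\mu = \int \log\|Df|_{E^c}\| \, d\mu$, so a nonzero value would be witnessed by an ergodic component, which by the above cannot occur.

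The main obstacle is making the closing argument rigorous in the non-uniformly hyperbolic setting while keeping the dimension-$1$ center under control: one must ensure the periodic orbit produced is genuine (not a pseudo-orbit artifact) and that the ``center+unstable'' Pesin manifold is actually tangent to $E^{cu}$ so that the expansion used in closing is the combined $\chi^c + \chi^u > 0$ rate. I expect the authors handle this either by invoking an existing closing lemma for $C^2$ partially hyperbolic systems with a positive center exponent, or by exploiting dynamical coherence to reduce directly to the dynamics of $f$ on the center foliation, which is where the absence of periodic points bites most cleanly.
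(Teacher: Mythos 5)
Your main argument is exactly the paper's proof: pass to an ergodic component with nonzero center exponent, observe that this measure is hyperbolic (the stable and unstable exponents are already nonzero by partial hyperbolicity), and invoke Katok's theorem \cite{ka} to produce periodic points, contradicting $\per(f)=\varnothing$. The Pesin-block/closing-lemma details you worry about are precisely what the citation to \cite{ka} covers for $C^2$ diffeomorphisms, so no further work (and none of your sketched alternative route) is needed.
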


\begin{proof}
    Suppose an invariant measure has a non-zero center exponent on a set of
    positive measure.
    Then, one of the measures in its ergodic decomposition would have the same
    property.
    Hence, we may assume there is an ergodic measure with a non-zero center
    exponent.
    This measure is then hyperbolic and implies that there are
    periodic points \cite{ka}.
    This gives a contradiction.
\end{proof}

\begin{lemma} \label{lemma.cero}
The functions
    $\phi_n : M \to \mathbb{R},
    \ x \mapsto \tfrac1n \log||Df^n_x |_{E^c}||$
converge uniformly to zero. 
\end{lemma}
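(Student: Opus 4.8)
The plan is to deduce uniform convergence of the subadditive sequence $\phi_n$ from the fact, established in Lemma \ref{centerzero}, that the center exponent vanishes for every invariant measure. The natural tool is a classical result relating uniform convergence of subadditive cocycles to the values of their limits on invariant measures. Concretely, setting $a_n(x) = \log\|Df^n_x|_{E^c}\|$, the sequence satisfies the subadditivity relation $a_{n+m}(x) \le a_n(x) + a_m(f^n(x))$, because $E^c$ is one-dimensional (or, more generally, because $\|Df^{n+m}|_{E^c}\| \le \|Df^n|_{E^c}\|\cdot\|Df^m|_{E^c}\circ f^n\|$). By the subadditive ergodic theorem, for each invariant measure $\mu$ the limit $\lim_n \tfrac1n a_n(x)$ exists $\mu$-a.e.\ and equals $\chi^c(x)$, and Lemma \ref{centerzero} tells us this is $0$ for all such $\mu$.

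First I would invoke the relevant uniform-convergence criterion: if $(a_n)$ is a continuous subadditive sequence over a dynamical system on a compact space such that $\lim_n \tfrac1n \int a_n\, d\mu = 0$ for every invariant probability measure $\mu$ (equivalently, by the ergodic decomposition, $\int a_1\,d\mu \le 0$ and more precisely $\inf_n \tfrac1n\int a_n\,d\mu \le 0$ for every ergodic $\mu$, while the reverse bound comes from the other cocycle $-\log\|Df^{-n}|_{E^c}\|$ or directly), then $\tfrac1n a_n \to 0$ uniformly. This is a standard consequence of the theory of subadditive cocycles; the cleanest route is to apply it once to $a_n(x) = \log\|Df^n_x|_{E^c}\|$ to get $\limsup_n \sup_x \tfrac1n a_n(x) \le 0$, and once to $b_n(x) = \log\|Df^{-n}_x|_{E^c}\| = -\log\|Df^n_{f^{-n}(x)}|_{E^c}\|$ to get $\limsup_n \sup_x \tfrac1n b_n(x) \le 0$, i.e.\ $\liminf_n \inf_x \tfrac1n a_n(x) \ge 0$. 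Combining these two one-sided uniform estimates yields $\phi_n \to 0$ uniformly.

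The key step where I must be careful is the uniform upper bound $\limsup_n \sup_x \tfrac1n a_n(x) \le 0$. If it failed, there would be $\delta > 0$, a subsequence $n_k$, and points $x_k$ with $\tfrac{1}{n_k} a_{n_k}(x_k) \ge \delta$. Averaging the Dirac masses $\delta_{x_k}$ along the orbit of length $n_k$ produces a sequence of probability measures; passing to a weak-$*$ limit gives an $f$-invariant measure $\mu$, and a standard subadditivity/semicontinuity argument (using that $a_{n_k}$ is continuous and the sequence is subadditive, so $\tfrac{1}{N}\int a_N\,d\mu \ge \limsup_k \tfrac{1}{n_k}a_{n_k}(x_k) \ge \delta$ for each fixed $N$ after the averaging) forces $\int a_1\,d\mu \ge \delta > 0$, hence $\chi^c > 0$ on a positive-measure set for some ergodic component — contradicting Lemma \ref{centerzero}. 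The main obstacle is getting this weak-$*$ limit argument to produce the strict positivity cleanly; the subadditive (rather than additive) nature of $a_n$ means one works with $\inf_N \tfrac1N \int a_N\,d\mu$ and uses that it equals $\int \chi^c\,d\mu$, which the subadditive ergodic theorem guarantees. With both one-sided bounds in hand, uniform convergence to zero follows immediately.
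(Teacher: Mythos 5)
Your proposal is correct, and its engine --- empirical measures along a bad orbit segment, a weak-$*$ limit which is $f$-invariant, and a contradiction with Lemma \ref{centerzero} --- is exactly the paper's. The difference is packaging: you treat $a_n(x)=\log\|Df^n_x|_{E^c}\|$ only as a \emph{subadditive} cocycle, so you must invoke the subadditive ergodic theorem, the identity $\int\chi^c\,d\mu=\inf_N\tfrac1N\int a_N\,d\mu$, and a second application to the inverse cocycle $b_n$ to get the lower one-sided bound. The paper instead exploits the point you mention only in passing: since $E^c$ is one-dimensional, the cocycle is genuinely \emph{additive}, $\phi_n(x)=\tfrac1n\sum_{j=0}^{n-1}\phi(f^j(x))$ with $\phi=\phi_1$ continuous. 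Additivity makes the semicontinuity step trivial ($\int\phi\,d\mu_k\to\int\phi\,d\mu$ directly, no fixed-$N$ blocking argument), handles both signs of a violation $|\phi_{n_k}(x_k)|\ge\alpha$ with the single observation $\int\phi\,d\mu=\lim_k\phi_{n_k}(x_k)$, and replaces Kingman by Birkhoff when contradicting Lemma \ref{centerzero}. Your route is more general (it would survive a higher-dimensional center), at the cost of extra machinery; also note that the intermediate claim ``forces $\int a_1\,d\mu\ge\delta$'' is not by itself enough to produce a positive exponent in the subadditive setting --- what you need, and do state at the end, is $\tfrac1N\int a_N\,d\mu\ge\delta$ for every $N$, hence $\int\chi^c\,d\mu\ge\delta$. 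With that reading, the argument is sound.
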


\begin{proof}
    Write $\phi = \phi_1$ and note that since
    the center is one-dimensional,
    $\phi_n(x) = \tfrac{1}{n} \sum_{j=1}^{n-1} \phi(f^j(x)).$
    Assume $\phi_n$ does not converge uniformly to zero.
    Then there are integers $n_k$ tending to $+\infty$
    and points $x_k \in M$ such that
    $\phi_{n_k}(x_k) \ge \alpha > 0$ for all $k$.
    Define measures
    $\mu_k = \tfrac{1}{n_k} \sum_{j=0}^{n_k-1} \delta_{f^j(x_k)}.$
    By restricting to a subsequence,
    we may assume these measures converge in the weak sense to
    a limit measure $\mu$ which one may show is $f$-invariant.
    Then,
    $\int \phi \, d\mu
    = \lim_{k \to \infty} \int \phi \, d\mu_k
    = \lim_{k \to \infty} \phi_{n_k}(x_n)
    \ge \alpha$,
    a contradiction.
\end{proof}


\begin{proof}[Proof of Theorem \ref{noperiodicpoints}]
    Let $\Lambda$ be a measurable $us$-saturated subset of $M$.
    As shown in \cite{bw, hhu},
    to establish ergodicity of $f$,
    it is enough to show that $\Lambda$ has either
    full or null Lebesgue measure.
    The intersection $\Lambda\cap \gamma$ is $f$-invariant.
    By the previous lemma,
    $f^n$ is 2-normally hyperbolic for large $n$
    and this implies that $\gamma$ is $C^2$ \cite{hps}.
    Since the rotation number of $f|_\gamma$ is irrational and $C^2$
    we may apply \cite[Th\'eor\`eme 1.4]{her} and get that
    $\Lambda\cap \gamma$ has full or null Lebesgue measure in $\gamma$. Since
    $\Gamma(f)$ is $f$-minimal and absolutely continuous, $\Lambda$ has full or
    null measure.
    This implies the ergodicity of $f$. 
 \end{proof}
 
 
 
 \section{$\Gamma(f)$ is horizontal}\label{section.horizontal}
 
 From now on $M$ will be a 3-dimensional Seifert manifold with negative Euler characteristic orbifold.
 
 \begin{proposition}\label{ushorizontal} Let $f:M\to M$ be a non accessible partially hyperbolic diffeomorphism. Then, $\Gamma(f)$ is horizontal. 
 \end{proposition}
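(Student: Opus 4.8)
The plan is to show that $\Gamma(f)$, which by Proposition \ref{sublamination} and the surrounding discussion is a nonempty $C^1$ lamination by accessibility classes (each being a $us$-surface), can be isotoped so that every leaf is horizontal, and then to argue that in our setting the vertical alternative of Brittenham--Thurston cannot occur. First I would dispose of the case $\Gamma(f) = M$: then $M$ carries a foliation by $us$-surfaces, which have no torus leaves in our setting because a torus leaf would put us in one of the three manifolds of Theorem \ref{anosov.tori}, none of which is a Seifert manifold with hyperbolic base orbifold (they are Sol, Nil, or $\T^3$ geometry). Absent torus leaves, Brittenham--Thurston applies and gives an isotopy after which every leaf is horizontal or vertical. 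The key point is then to exclude vertical leaves: a vertical leaf of $\Gamma(f)$ would be a union of fibers, hence a surface tangent to $E^c \oplus E^u$ (or contained in a $us$-accessibility class) that is saturated by circle fibers — but the fibers are center-like curves in a suitable leaf-conjugacy picture, and a compact fiber sitting inside a $us$-leaf forces a periodic center curve with extra structure; more directly, a vertical leaf is itself a compact surface (a torus or Klein bottle, since it is a circle bundle over a $1$-complex inside $\Sigma$), contradicting the absence of compact leaves we just established via Theorem \ref{anosov.tori}.

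For the case $\varnothing \ne \Gamma(f) \ne M$, Proposition \ref{sublamination} tells us the boundary leaves of each complementary region have Anosov dynamics and dense periodic points, and in particular $\Gamma(f)$ meets $\per(f)$. Since $f$ is isotopic to the identity (this hypothesis is inherited from Theorem \ref{maintheorem}, which is presumably the ambient assumption in Section \ref{section.horizontal}, though here it is only stated that $f$ is non-accessible — I would check whether the isotopy hypothesis is needed or whether it follows that $\Gamma(f)$ has no compact leaf), a compact leaf of $\Gamma(f)$ would be a torus and again invoke Theorem \ref{anosov.tori} to rule out $M$ being Seifert with hyperbolic base. So $\Gamma(f)$ has no compact leaves, and since it is a closed $C^1$ lamination we can complete it to a foliation of $M$ by filling in the complementary regions (each complementary region fibers over an interval or circle with $us$-leaves as boundary, and the product/suspension structure there extends the lamination to a foliation, still without torus leaves). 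Then apply Brittenham--Thurston to this foliation and restrict the resulting isotopy back to $\Gamma(f)$: every leaf becomes horizontal or vertical, and the vertical option is excluded exactly as above because a vertical leaf is compact.

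The main obstacle I expect is the passage from the lamination $\Gamma(f)$ to an honest foliation without torus leaves, so that Brittenham--Thurston is applicable, together with making sure the isotopy used does not disturb the transversality of $\Gamma(f)$ to the fibers in a way that reintroduces vertical pieces. One has to be careful that completing $\Gamma(f)$ to a foliation does not create torus leaves (the complementary regions, having Anosov boundary dynamics, are $us$-bundles over $S^1$ whose monodromy is hyperbolic, so the filled-in leaves are planes or annuli, not tori — this needs the structure theory of such laminations, e.g. \cite{ha}), and that the final isotopy applied to the foliation carries $\Gamma(f)$ to a sublamination all of whose leaves are horizontal. A secondary subtlety is ruling out vertical leaves cleanly: the cleanest argument is that a vertical leaf, being $p$-saturated over a $1$-dimensional subset of $\Sigma$, is a closed surface, but $\Gamma(f)$-leaves are noncompact (they are $us$-accessibility classes carrying a nonsingular $1$-dimensional unstable foliation with no closed leaves once we know $M$ is not one of the exceptional manifolds), giving the contradiction; I would make this precise using the non-compactness of leaves together with Theorem \ref{anosov.tori}.
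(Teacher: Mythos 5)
There is a genuine gap, and it sits exactly where you flagged your ``secondary subtlety'': your exclusion of vertical leaves rests on the claim that a vertical leaf is a compact surface (torus or Klein bottle) because it is a circle bundle over a one-dimensional subset of $\Sigma$. That is false. A vertical leaf is saturated by fibers and projects to a leaf of a one-dimensional lamination of the base, and such a leaf is an injectively immersed $1$-manifold that is typically a non-compact line; the corresponding vertical leaf is then an open annulus (cylinder), not a closed surface. So the absence of compact leaves (which you correctly obtain from Theorem \ref{anosov.tori}, since a Seifert manifold with hyperbolic base orbifold is none of the three exceptional manifolds) does not by itself rule out vertical leaves, and both your $\Gamma(f)=M$ case and your $\varnothing\ne\Gamma(f)\ne M$ case collapse at this point.

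The paper closes this gap dynamically rather than topologically. Let $V(f)$ be the union of vertical leaves of $\Gamma(f)$; it is closed, $f$-invariant and $us$-saturated. If $V(f)=M$, then all leaves are vertical and the base orbifold would inherit a one-dimensional foliation, impossible for negative Euler characteristic (Remark \ref{foliation.with.vertical.leaves}). If $\varnothing\ne V(f)\ne M$, Proposition \ref{sublamination} produces a periodic boundary leaf of $V(f)$ carrying Anosov dynamics with dense periodic points; since there are no compact leaves, that leaf is a (non-compact) cylinder, and Lemma \ref{lemma.cylinder} (a Franks-type argument using local product structure, orientability and Poincar\'e--Bendixson) shows a cylinder supports no such dynamics. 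Your use of Theorem \ref{anosov.tori} to kill compact leaves and your concerns about completing $\Gamma(f)$ to a foliation without torus leaves before invoking Brittenham--Thurston are in line with the paper, but without the dynamical argument on cylinders (or some substitute for it) the vertical alternative is not excluded, so the proposed proof does not go through as written.
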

 
 \begin{proof}
By Theorem \ref{anosov.tori}, $\Gamma(f)$ cannot have compact leaves.
Let $V(f)$ be the union of vertical leaves in $\Gamma(f)$.
This set $V(f)$ is closed and $f$-invariant. 
If $V(f)=M$, then all leaves would be vertical, and then the orbifold would have a one-dimensional foliation, which is impossible. Now, since there are no compact leaves, all leaves in $V(f)$ are cylinders. If $V(f)\ne \varnothing$, then by Proposition \ref{sublamination}, there would be a periodic leaf with Anosov dynamics and dense periodic points. The cylinder does not support such dynamics, as we prove for completeness in Lemma \ref{lemma.cylinder} below.
 \end{proof}

\begin{remark}\label{foliation.with.vertical.leaves} The argument in the proof above shows that, in fact, no codimension one foliation can be such that all its leaves are vertical. 
\end{remark}

\begin{lemma}\label{lemma.cylinder} A cylinder does not support Anosov dynamics with dense periodic points.
\end{lemma}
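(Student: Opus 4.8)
The plan is to rule out Anosov dynamics on a cylinder $C$ (either the open cylinder $\SS^1\times\RR$ or the closed cylinder $\SS^1\times[0,1]$; in our application the relevant leaves are open cylinders, but the same argument adapts) by exploiting the topology of $C$, and in particular the fact that an Anosov system has a well-defined splitting $TC = E^s\oplus E^u$ into two one-dimensional continuous line fields. First I would note that ``Anosov dynamics with dense periodic points'' gives, by the standard theory (shadowing, local product structure, spectral decomposition), that the system is transitive and in fact conjugate to a subshift of finite type on its nonwandering set; but rather than invoke that heavy machinery, the cleanest route is topological: a continuous splitting into two transverse line fields forces strong constraints on $C$.

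The key steps, in order, are as follows. (1) An Anosov diffeomorphism $g$ of a surface $S$ has two continuous invariant line fields $E^s, E^u$; after passing to a double cover if necessary (Remark \ref{remark.orientability}-style argument), these are orientable, hence given by continuous nonvanishing vector fields $v^s, v^u$. (2) On the open cylinder $\SS^1\times\RR$ there is no Anosov diffeomorphism at all: the simplest obstruction is that an Anosov diffeomorphism has positive topological entropy and dense periodic points, hence infinitely many periodic points, but a more robust and self-contained argument is that a surface carrying an Anosov diffeomorphism must have nonzero simplicial volume / must be a torus by Franks--Newhouse-type arguments in the compact case; in the noncompact cylinder case one argues directly that the unstable manifolds are lines that are properly embedded and, together with density of periodic points and the local product structure, this forces a global product structure incompatible with the cylinder's fundamental group $\ZZ$ (one gets, after lifting to the universal cover $\RR^2$, that $g$ lifts to a map with the product structure of a hyperbolic toral automorphism's lift, whose deck group would have to be $\ZZ^2$, not $\ZZ$). (3) For the closed cylinder $\SS^1\times[0,1]$: the boundary circles are permuted by $g$, so an iterate fixes each boundary circle; on a boundary circle $g$ is a diffeomorphism of $\SS^1$ which must be expanding or contracting in the normal direction, and chasing the invariant line fields along the boundary (they must be tangent or transverse to $\partial C$) together with density of periodic points on the boundary forces the restriction to the boundary to be an Anosov (hence impossible) circle diffeomorphism, or else one runs the Euler-characteristic argument: a line field on $C$ transverse to $\partial C$ would, by the Poincar\'e--Hopf theorem for manifolds with boundary, require $\chi(C)=0$ with sign conditions that cannot be met, while a line field tangent to $\partial C$ pushes the dynamics onto the two boundary circles and we reduce to the circle case, where dense periodic points plus hyperbolicity is absurd since a circle diffeomorphism with a hyperbolic fixed point is not transitive.

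Concretely in the write-up I would pick the argument via the invariant foliations: $\W^s(g)$ and $\W^u(g)$ are transverse one-dimensional foliations of $C$ whose leaves are the (injectively immersed) lines $\RR$, and density of periodic points gives local product structure everywhere. Lifting to the universal cover $\widetilde C = \RR^2$, the lifted foliations $\widetilde\W^s, \widetilde\W^u$ have all leaves properly embedded lines and each pair of a stable and an unstable leaf meets in exactly one point, so $\RR^2$ is the product of a stable leaf and an unstable leaf, and the deck transformation group $\ZZ$ acts on this product respecting the two factors; since $g$ uniformly contracts one factor and expands the other, standard arguments (as for codimension-one Anosov flows / for Anosov diffeomorphisms of surfaces) show the deck group must act with a nontrivial translation component on \emph{both} factors, forcing rank at least $2$, contradicting $\pi_1(C)=\ZZ$. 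The main obstacle is making step (2)/(3) genuinely self-contained rather than quoting the classification of Anosov surface diffeomorphisms (which is exactly the torus); I expect to handle this by the foliation/universal-cover rank argument above, which only uses the existence of the two transverse invariant foliations and the local product structure coming from density of periodic points, and is short enough to include ``for completeness'' as the surrounding text promises. If the closed-cylinder boundary case proves fussy, the fallback is to observe that in the intended application $V(f)=\varnothing$ is proved using Proposition \ref{sublamination}, whose periodic boundary leaves are \emph{open} cylinders (being leaves of a foliation with no compact leaves), so only the open-cylinder case is strictly needed.
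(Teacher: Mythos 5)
Your universal-cover argument has two genuine gaps, both at the heart of the proof. First, the global product structure on $\widetilde C=\RR^2$ is asserted, not proved: transversality of the lifted foliations together with local product structure does \emph{not} give that every lifted stable leaf meets every lifted unstable leaf, nor even that two leaves meet at most once without a separate bigon/Poincar\'e--Bendixson argument. Two transverse foliations of the plane by properly embedded lines can perfectly well have leaves that miss each other; ruling this out is exactly the global input that a proof of the lemma must supply, and on a noncompact leaf you cannot quote the compact-surface Anosov theory (your alternative suggestions --- positive entropy, Franks--Newhouse, simplicial volume --- all need compactness, as you partly acknowledge). Second, and more decisively, the concluding implication ``the deck group must act with a nontrivial translation component on both factors, forcing rank at least $2$'' is false as stated: the infinite cyclic group generated by $(x,y)\mapsto(x+1,y+1)$ acts on $\RR\times\RR$ with nontrivial translations in both factors, has rank one, and its quotient is a cylinder. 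So no contradiction with $\pi_1(C)\cong\ZZ$ arises from that observation alone; any contradiction must use the commutation of the deck transformation $\tau$ with a lift $\tilde g$ of the dynamics --- for instance, after passing to an iterate, choose $\tilde g$ fixing a lift $\tilde p$ of a periodic point, note that $\tau(\tilde p)$ is then a second fixed point of $\tilde g$, and invoke Theorem \ref{mendes} (Mendes: an Anosov map of the plane has at most one fixed point), which is precisely the trick used later in the paper's proof of the main theorem. Your sketch contains no such dynamical step, and even the Mendes route would not by itself repair the unproved product structure elsewhere in your outline. (Your fallback remark that only open cylinders are needed is correct and consistent with the paper; the closed-cylinder discussion can simply be dropped.)

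For comparison, the paper's own proof is shorter and avoids both issues by adapting Franks' argument directly on the cylinder: using local product structure and density of periodic points, one shows that each half unstable manifold $W^u_{\pm}(p)$ of a periodic point is dense (its closure is open); a dense half-line must then return to the local stable manifold of $p$, creating a loop formed by an unstable arc and a stable arc, which either bounds a disc --- excluded by a Poincar\'e--Bendixson argument --- or is essential and separates the cylinder into two components, and orientability then confines the remainder of the half-line to one component, contradicting its density. If you wish to keep a universal-cover approach, you should either prove the global product structure (which essentially requires a density/separation argument of the same nature) or restructure the proof around the fixed-point count of Theorem \ref{mendes}; as written, the proposal does not close the argument.
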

\begin{proof}
We adapt an argument given in \cite{franks}.
The unstable manifold of each periodic point is dense. Indeed, it is enough to see that $\overline{W^{u}(p)}$ is open, hence it is the whole cylinder. Take $x\in\overline{W^{u}(p)}$, and take a small neighborhood $U$ of $x$. Take a periodic point $q$ in $U$. By local product structure, the stable manifold of $q$ intersects the unstable manifold of $x$. Since $x$ is in the closure of $W^{u}(p)$, the stable manifold of $q$ intersects $W^{u}(p)$. Hence, $q\in\overline{ W^{u}(p)}$. Therefore since periodic  points are dense in $U$, $U\subset\overline{ W^{u}(p)}$. The same proof shows that each half-line $W^{u}_{\pm}(p)$ is dense in the cylinder.

Since $W^{u}_{+}(p)$ is dense, it intersect the local stable manifold of $p$ in another point. There are two possibilities. Either the unstable-stable loop encloses a disc, which would produce a contradiction by Poincar\'e-Ben\-dixon, or else it cuts the cylinder into two connected components. By orientability, the rest of the demi-line has to remain in only one connected component, which contradicts its density. 
\end{proof}
 
 \section{Invariance of the leaves of horizontal foliations}\label{section.horizontal}
  
Suppose $h : M \to M$ is a homeomorphism and $\hat M$ is covering space for $M$.
In general, there may be multiple possible lifts of $h$ to a homeomorphism on $\hat M$.
We call $\hat h : \hat M \to\hat M$ a \emph{good lift} if it commutes with all of the deck transformations
of the covering.
Note that if $h$ is isotopic to the identity, then it has at least one good lift,
since we may lift the isotopy to the covering space.
We use this property to prove the following.

 \begin{theorem}\label{leafinvariance} Let $M$ be a non-trivial circle bundle over a surface $\Sigma$ of negative Euler characteristic, $h:M \to M$ a homeomorphism isotopic to the identity, and $\Lambda$ an $h$-invariant horizontal minimal lamination. Then, there exists $k\in \NN$ such that every leaf of the lamination is $h^k$-invariant. 
 \end{theorem}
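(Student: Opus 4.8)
The plan is to work in the universal cover, or rather in a suitable intermediate cover, and exploit the structure of a horizontal lamination in a circle bundle. Since $h$ is isotopic to the identity, fix a good lift $\hat h$ of $h$ to the universal cover $\hat M$, so that $\hat h$ commutes with every deck transformation. Let $\hat\Lambda$ be the full preimage of $\Lambda$; since $\Lambda$ is horizontal, each leaf of $\hat\Lambda$ is a horizontal plane that projects homeomorphically (via the bundle projection $p$ composed with the covering) onto the universal cover $\tilde\Sigma$ of the base. Thus the leaf space of $\hat\Lambda$ is naturally identified with a closed subset of $\RR$ (the fiber direction), and the deck group acts on this leaf space. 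Because $\Sigma$ has negative Euler characteristic, the fiber subgroup is central and infinite cyclic, generated by the class $c$ of a fiber; the key point is to understand how $c$ and $\hat h$ act on this one-dimensional leaf space.

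First I would show that $\hat h$ acts on the leaf space of $\hat\Lambda$ as an orientation-preserving homeomorphism of a subset of $\RR$, and that it commutes with the translation-like action of the central element $c$. Since $\Lambda$ is minimal, the leaf space is either a single orbit or a Cantor set, and in either case the quotient by $\langle c\rangle$ is compact. The induced map $\bar h$ on this compact quotient leaf space is then a homeomorphism of either a circle or a Cantor set (depending on the combinatorics), and it carries a well-defined rotation number $\rho$. I would argue that $\rho$ must be rational: if $\rho$ were irrational, then $h$ would move leaves nontrivially along a minimal circle-like order, but the fact that $M$ is a \emph{non-trivial} bundle forces a monodromy obstruction — the bundle being non-trivial (non-zero Euler number) means there is no global section, which translates into the statement that the cyclic central action has no fixed leaf and pins down the translation number of $c$ to be a fixed nonzero integer in the appropriate normalization; combined with commutativity this forces $\bar h$ to have periodic points, i.e. $\rho \in \QQ$. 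Taking $k$ to be the denominator of $\rho$, the map $\bar h^k$ fixes some leaf of the quotient, hence $\hat h^k$ maps some leaf $\hat L$ of $\hat\Lambda$ to a translate $c^m(\hat L)$.

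Next I would upgrade "fixes one leaf up to a central translate" to "fixes every leaf." Once $\hat h^k$ preserves the (compact) collection of $\langle c\rangle$-orbits of leaves and fixes one such orbit, I consider the commuting pair $\hat h^k$ and $c$ acting on the leaf space: the subgroup they generate is abelian, acts on $\RR$ (or on a Cantor subset with its linear order), and $\hat h^k$ has a fixed point modulo $c$. A standard argument (Hölder-type, or directly via the Poincaré translation-number homomorphism on the group of orientation-preserving homeomorphisms commuting with $c$) then shows $\hat h^k$ itself, after possibly composing with a power of $c$ — but that power is a deck transformation, so it does not change the map $h^k$ downstairs — fixes \emph{every} leaf. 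Projecting back to $M$, every leaf of $\Lambda$ is $h^k$-invariant, which is the assertion.

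I expect the main obstacle to be the step that rules out irrational rotation number, i.e. genuinely using the non-triviality of the circle bundle. This is where the hypothesis that $\Sigma$ has negative Euler characteristic and the bundle is non-trivial must enter in an essential way; the heart of it is that the central fiber class acts on the horizontal leaf space with a nonzero, isotopy-invariant translation number determined by the Euler number, and that a good lift of a map isotopic to the identity cannot alter this. Carefully setting up the translation-number bookkeeping for the action on a possibly-Cantor leaf space (rather than a genuine circle), and checking that composing with deck transformations is harmless because they descend trivially, is the technical core; the rest is soft dynamics of commuting circle homeomorphisms.
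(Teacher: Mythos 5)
Your overall framework---pass to the leaf space of the lifted lamination, identify the deck-group action with a surface-group action on (a closed subset of) the circle with Euler number equal to that of the bundle, and study a commuting circle homeomorphism induced by a good lift of $h$---is exactly the paper's strategy. However, the two steps that carry all the weight are not actually proved, and one of them is argued incorrectly.

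First, the rationality of the rotation number. You acknowledge this is the main obstacle, but what you offer (``the translation number of $c$ is a fixed nonzero integer, and combined with commutativity this forces $\bar h$ to have periodic points'') is not a valid mechanism: an irrational rotation commutes with every translation, so commutation with the central element alone forces nothing. The non-zero Euler number must enter through the \emph{whole} surface group, via the relation $[\tilde\alpha_1,\tilde\beta_1]\cdots[\tilde\alpha_g,\tilde\beta_g]=\tau^{e}$ among lifts, and you never use it. The paper's argument is: if the commuting homeomorphism $h$ had irrational rotation number it would be uniquely ergodic, so its unique invariant measure is invariant under every element of $G$; lifting that measure to $\RR$ produces well-defined translation numbers $a_i,b_i$ for the lifted generators which behave additively, and plugging these into the commutator relation forces $e=0$, a contradiction. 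Some argument of this type (a common invariant measure, or a Milnor--Wood style bound) is indispensable and is absent from your proposal.

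Second, the upgrade from ``some leaf is fixed'' to ``every leaf of the lamination is fixed.'' Your proposed route (an abelian group generated by $\hat h^{k}$ and $c$ acting on the line, plus a fixed point modulo $c$, hence $\hat h^{k}$ is the identity after composing with a power of $c$) fails: a circle homeomorphism with rational rotation number and a fixed point need not be the identity---it can fix some leaves and translate the complementary gaps. The correct ingredient, which you state early on but never use at this step, is minimality of the lamination: the set $X\cap\Fix(h^{k})$ in the leaf space is nonempty, closed, and invariant under the surface group (because $h$ commutes with all deck transformations), so minimality of the $G$-action on $X$ forces $X\cap\Fix(h^{k})=X$, i.e.\ $h^{k}$ fixes every leaf of $\Lambda$ (and only of $\Lambda$; one cannot expect all leaves of the extended foliation to be fixed). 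As written, both pivotal steps of your proof are gaps.
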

 
Since the lamination is horizontal,
the connected components of $M \setminus \Lambda$
are $I$-bundles where the fibers come from the circle
fibering on $M$.
Because of this, we may extend $\Lambda$
to a horizontal foliation $\mathcal F$
defined on all of $M$.
We may further assume that $\mathcal F$
is invariant by $h$.

There is a correspondence between horizontal foliations
on circle bundles and the action of surface groups on $S^1$.
See, for instance, \cite[\S2]{mann}.
Because of this, we will reformulate and prove
the result purely in the language of group actions.

A \emph{surface group} is the fundamental group of a closed surface of genus $g \ge 2$.
This group has generators $\alpha_i, \beta_i$
for $i = 1, \ldots, n$ with the relation
\[
    [\alpha_1, \beta_1]
    \cdots
    [\alpha_g, \beta_g]
    =
    1.
\]
When a surface group acts on $S^1$,
we may lift the generators
$\alpha_i, \beta_i : S^1 \to S^1$
to maps 
$\tilde \alpha_i, \tilde \beta_i : \RR \to \RR$
where $\RR$ is the universal cover of
$S^1 = \RR / \ZZ$.
If $\tau : \RR \to \RR$
is defined by $\tau(x) = x + 1$,
then there is an integer $e$ such that
\[
    [\tilde \alpha_1, \tilde \beta_1]
    \cdots
    [\tilde \alpha_g, \tilde \beta_g]
    =
    \tau^e.
\]
This integer $e$ is independent of the choice of lifts
and is the \emph{Euler number} of the action.
The action of a group $G$ on a topological space $X$
is \emph{minimal} if any non-empty $G$-invariant subset
is dense in $X$.

To prove Theorem \ref{leafinvariance},
it is enough to prove the following equivalent formulation.

\pagebreak[3]

\begin{proposition}\label{action}
    Suppose
    \begin{enumerate}
        \item
            $G$ is a surface group acting on $S^1$
            with non-zero Euler number,
        \item
            there is a closed $G$-invariant subset $X$ such that the action
            of $G$ on $X$ is minimal, and
        \item
            there is a homeomorphism $h : S^1 \to S^1$ that
            commutes with every element of $G$
            and such that $h(X) = X$.
    \end{enumerate}
    Then, there is $k \ge 1$ such that
    $h^k|_X$ is the identity map.
\end{proposition}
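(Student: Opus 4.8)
The plan is to reduce to the case $X = S^1$ and then analyze $h$ through its Poincar\'e rotation number. At the outset we may assume $h$ preserves orientation: replacing $h$ by $h^2$ leaves the three hypotheses intact and only forces a larger power $k$ at the end. (The $G$-action is orientation preserving by the definition of the Euler number we use.)

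The first step is to show $X$ cannot be finite. A finite $G$-orbit would carry a $G$-invariant probability measure on $S^1$, and it is classical that an action of a surface group on $S^1$ with an invariant probability measure has vanishing Euler number (the translation number becomes additive and trivializes the real Euler class), contradicting hypothesis (1). Since a closed minimal set of a group acting on $S^1$ is $S^1$, finite, or a Cantor set, it remains to handle $X = S^1$ and $X$ a Cantor set. In the Cantor case, one collapses the closure of each connected component of $S^1 \setminus X$ to a point, obtaining a monotone degree-one map $\pi : S^1 \to S^1$: this is a semiconjugacy onto a circle on which $G$ acts minimally with the same, still non-zero, Euler number, and through which $h$ (which permutes the complementary intervals) descends to a homeomorphism $\bar h$ commuting with the induced $G$-action. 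Granting the conclusion $\bar h^k = \mathrm{id}$ in the minimal-on-all-of-$S^1$ case, we obtain $\pi \circ h^k = \pi$ on $X$; since $\pi$ is injective on $X$ off the countable set of endpoints of complementary intervals, whose complement is dense in the perfect set $X$, continuity of $h^k$ gives $h^k|_X = \mathrm{id}$. Thus everything reduces to showing that a homeomorphism $h$ commuting with a minimal surface-group action on $S^1$ has finite order.

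For this reduced statement I would run a trichotomy on $h$. If $h$ has a fixed point, then $\Fix(h)$ is a non-empty closed $G$-invariant subset of $S^1$, hence equals $S^1$ by minimality, so $h = \mathrm{id}$. Otherwise $h$ is fixed-point free, so $\rho(h) \ne 0$. The key claim is that $\rho(h)$ is not irrational: if it were, $h$ would have a unique minimal set $Y$, which is closed, non-empty and $G$-invariant (because $G$ commutes with $h$), hence $Y = S^1$; by Poincar\'e's theorem $h$ would then be topologically conjugate to the irrational rotation $R_{\rho(h)}$, so in the conjugating coordinate $G$ would lie in the centralizer of an irrational rotation, namely the group of rotations of $S^1$, making $G$ abelian, which is impossible for a surface group of genus $\ge 2$. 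Hence $\rho(h) = p/q$ is rational with $q \ge 2$, so $h^q$ has rotation number $0$ and therefore a fixed point; since $h^q$ also commutes with $G$, the first case yields $h^q = \mathrm{id}$, and $k = q$ works.

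I expect the main obstacle to be precisely the exclusion of the irrational rotation number: the whole argument hinges on a homeomorphism that commutes with a \emph{non-abelian} minimal surface-group action on $S^1$ being unable to be (semi)conjugate to an irrational rotation, which is what ultimately forces finite order. The Cantor-set reduction is the other point needing care, in particular verifying that $h$ genuinely descends through the collapsing map, that the collapsed action still has non-zero Euler number, and that being the identity downstairs pulls back to being the identity on $X$.
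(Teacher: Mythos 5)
Your proposal is correct in substance but follows a genuinely different route from the paper. The paper never reduces to a minimal action on all of $S^1$: it first rules out irrational rotation number by noting that such an $h$ is uniquely ergodic, so its unique invariant measure is automatically invariant under every element of $G$ (which commutes with $h$), and the translation numbers of the lifted generators with respect to this measure force the Euler number to vanish; then, with $\rho(h)$ rational, it takes the least $k$ with $\Fix(h^k)\neq\varnothing$, observes that every closed $h$-invariant set --- in particular $X$ --- meets $\Fix(h^k)$, and concludes $X\cap\Fix(h^k)=X$ by $G$-minimality of $X$. This is shorter: no trichotomy finite/Cantor/$S^1$, no collapsing of gaps, no semiconjugacy-invariance of the Euler number, and no Poincar\'e conjugacy. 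Your route buys a more structural picture (everything is pushed to a minimal action on the circle, where $h$ is either the identity, of finite order, or conjugate to an irrational rotation), at the cost of having to verify that $h$ and the nonzero Euler number descend through the collapsing map --- points your outline correctly flags as needing care.

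One step does need repair: in the irrational case you conclude that $G$ ``would be abelian, which is impossible for a surface group.'' The action is not assumed faithful, so you only obtain that the \emph{image} of $G$ lies in the rotation group; a surface group can act through an abelian (even trivial) quotient, so non-abelianness of $G$ itself is not the contradiction. The contradiction must instead come from hypothesis (1): an action through rotations preserves Lebesgue measure, and --- by the very fact you already invoke to exclude finite $X$ --- an invariant probability measure forces the Euler number to be zero. With that substitution (which is, in essence, also how the paper disposes of the irrational case), your argument is complete.
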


\begin{proof}
    We first suppose $h$ has irrational rotation number
    and derive a contradiction.
    In this case, $h$ is uniquely ergodic.
    As $h$ commutes with the elements of $G$,
    the unique invariant measure $\mu$
    is also invariant for every element of $G$.
    Let $\tilde \mu$ be the lift of $\mu$
    to $\RR$.
    In particular, $\tilde \mu ( [m,n) ) = n - m$
    for any integers $m < n$.
    This measure $\tilde \mu$ is invariant
    under the lifted maps $\tilde \alpha_i$
    and $\tilde \beta_i$ described above,
    and so there are real numbers $a_i, b_i$
    such that
    \[\tilde \mu \big( \, [x, \tilde \alpha_i(x)) \, \big) = a_i
    \quad \text{and} \quad
    \tilde \mu \big( \, [x, \tilde \beta_i(x)) \, \big) = b_i\]
    for all $i$ and all points $x \in \RR$.
    Since
    $\sum_i (a_i + b_i - a_i - b_i) = 0$,
    one may use this to show that the Euler number is zero.
    This gives a contradiction,
    and so we reduce to the case where $h$ has rational rotation number.

    \smallskip

    Let $k \geq 1$ be the smallest integer such that $\Fix(h^k)$
    is non-empty.
    Then any closed $h$-invariant subset of $S^1$ must intersect
    $\Fix(h^k)$.
    In particular,
    $X \cap \Fix(h^k)$
    is non-empty.
    Since $h$ commutes with all elements of $G$,
    this set
    is $G$-invariant.
    Minimality then implies that
    $X \cap \Fix(h^k) = X$.
\end{proof}

\begin{proposition}\label{fixedcover}
In the setting of Proposition \ref{leafinvariance},
if $k = 1$, that is,
if $h$ fixes every leaf of $\Lambda$,
then there is a good lift $\tilde h$
of $h$ to the universal cover $\tilde M$
such that $\tilde h$ fixes every leaf of the lifted lamination
$\tilde \Lambda$.
\end{proposition}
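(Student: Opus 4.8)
The plan is to construct the good lift $\tilde h$ directly and then verify it has the required invariance property by a deck-transformation argument. First I would fix a good lift $\tilde h_0$ of $h$ to $\tilde M$; this exists because $h$ is isotopic to the identity. The lifted lamination $\tilde\Lambda$ has a leaf space on which $\pi_1(M)$ acts, and since $\Lambda$ is horizontal, each leaf of $\tilde\Lambda$ is a copy of (a subset of) the universal cover of the base orbifold, while the circle-fiber direction gives an $\RR$-coordinate transverse to the leaves; in this transverse coordinate the leaf space of $\tilde\Lambda$ is naturally identified with a closed subset of $\RR$ on which the fundamental group acts by the lifted surface-group action described before Proposition \ref{action}. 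The hypothesis $k=1$ says $h$ fixes every leaf of $\Lambda$, equivalently $h$ acts trivially on the leaf space of $\Lambda$ downstairs.

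The key point is that $\tilde h_0$, acting on the leaf space of $\tilde\Lambda$ (a closed subset $\tilde X \subset \RR$), commutes with the deck action and descends to the identity on the quotient leaf space of $\Lambda$. Hence $\tilde h_0$ acts on $\tilde X$ as a deck transformation of the covering $\tilde X \to X$ followed by the identity — more precisely, $\tilde h_0$ translates each leaf of $\tilde\Lambda$ to another leaf in the same $\pi_1(M)$-orbit, and since these lie over the same leaf of $\Lambda$, the "amount'' by which $\tilde h_0$ moves the leaf space is given by a single element $\gamma_0$ of the center of $\pi_1(M)$ (the subgroup generated by the circle fiber), because $\tilde h_0$ commutes with all deck transformations and the only deck transformations acting as a translation on the transverse $\RR$-coordinate while fixing the leaf-space quotient are powers of the fiber generator. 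Then I would set $\tilde h := \gamma_0^{-1} \circ \tilde h_0$. This is still a good lift (composing a good lift with a central, hence commuting, deck transformation gives a good lift), and by construction $\tilde h$ fixes the leaf space of $\tilde\Lambda$ pointwise, i.e.\ fixes every leaf of $\tilde\Lambda$.

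The main obstacle I anticipate is making precise the claim that the discrepancy between $\tilde h_0$ and a leaf-fixing lift is exactly a power of the fiber generator. This requires knowing that the fiber subgroup is central and that it is precisely the kernel of the $\pi_1(M)$-action on the base, together with the fact that the transverse coordinate to a horizontal foliation is genuinely a circle coordinate on which the fiber acts by an integer translation — this is where the correspondence with the surface-group action on $S^1$ (as cited via \cite{mann}) is used. One subtlety is that $\tilde\Lambda$ is a lamination, not a foliation, so "leaf space'' must be interpreted as a closed subset of $\RR$ rather than all of $\RR$; but since we already extended $\Lambda$ to a horizontal foliation $\mathcal F$ invariant under $h$ in the proof of Theorem \ref{leafinvariance}, I would work with $\tilde{\mathcal F}$ throughout and then restrict the conclusion to $\tilde\Lambda$ at the end. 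Once the discrepancy is identified with a fiber power $\gamma_0$, the rest is the short formal argument above: $\tilde h = \gamma_0^{-1}\tilde h_0$ is the desired good lift.
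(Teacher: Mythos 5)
Your overall strategy --- take a good lift $\tilde h_0$ and correct it by a power of the central fiber element --- is the same as the paper's, but the step that carries all the content is asserted rather than proved. From the hypothesis you only obtain that $\tilde h_0$ sends each leaf of $\tilde\Lambda$ to a leaf in the same $\pi_1(M)$-orbit, i.e.\ that its action on the leaf space $\tilde X\subset\RR$ descends to the identity on the quotient of $\tilde X$ by \emph{all} of $\pi_1(M)$. What your construction needs is the much stronger statement that it descends to the identity on the quotient of $\tilde X$ by the fiber subgroup $\langle\tau\rangle$ alone --- equivalently, that the lift $\hat h$ of $h$ to the intermediate cover $\DD\times S^1$ fixes every leaf of $\hat\Lambda$, i.e.\ that the induced circle map is the identity on $X\subset S^1$, not merely that it preserves each $G$-orbit in $X$. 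Your justification (``$\tilde h_0$ commutes with all deck transformations and the only deck transformations acting as a translation on the transverse coordinate while fixing the leaf-space quotient are powers of the fiber generator'') does not bridge this: $\tilde h_0$ is not a deck transformation, so its action on the leaf space need not coincide with the action of any deck transformation; and the element of $\pi_1(M)$ carrying a leaf $\tilde L$ to $\tilde h_0(\tilde L)$ is well defined only modulo the stabilizer of $\tilde L$ (which is large, since leaves of $\Lambda$ are not simply connected), may vary from leaf to leaf, and has no reason a priori to be central or to act as an integer translation. This is precisely where the paper invokes Proposition \ref{action}: the nonzero Euler number excludes an irrational rotation number for the induced circle map, and minimality of the $G$-action on $X$ upgrades ``fixes a point of $X$'' to ``is the identity on $X$''. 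Without this circle-dynamics input (or an equivalent argument), your $\gamma_0$ need not exist.

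A secondary gap: even granting that on $\tilde X$ the lift acts pointwise as some power $\tau^{n(\tilde x)}$, you still must show the integer $n(\cdot)$ is constant before you can subtract a single $\gamma_0=\tau^n$. Since $\tilde X$ is in general disconnected (a lamination), local constancy of $n$ is not enough; one needs its invariance under the full deck group (using centrality of $\tau$) together with minimality of the action on $X$, or the paper's two-stage covering argument through $\DD\times S^1$. Passing to the extended foliation $\mathcal F$ does not repair this, because $h$ is only known to fix the leaves of $\Lambda$, not those of $\mathcal F$.
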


\begin{proof}
To prove this, we first consider an intermediate covering space $\hat M$
that is homeomorphic to $\DD \times S^1$
where $\DD$ is the Poincar\'e disc.
The circle fibering and the foliation $\mathcal F$
lift to $\hat M$.
Let $\hat{ \mathcal F}$ be the lifted foliation
and $\hat \Lambda$ be the lift of $\Lambda$.
As $h$ is isotopic to the identity,
the isotopy gives us a good lift
$\hat h : \hat M \to \hat M$
and from Proposition \ref{action}
we can see that $\hat h$ fixes every leaf of $\hat \Lambda$.

Now consider the universal covering $\tilde M \to \hat M$.
This is a infinite cyclic covering and the map
$\tau : \tilde M \to \hat M$
generating this group of deck transformations lies in the center
of the larger group of deck transformations corresponding to
the covering
$\tilde M \to M$.
We may lift $\hat h$ to a good lift $\tilde h$ defined on
$\tilde M$.
Then, there is an integer $n$ such that
$\tau^n \circ \tilde h$ fixes every leaf
of $\tilde \Lambda$.
Since $\tau$ is in the center of the group,
this composition is a good lift.
\end{proof}


\begin{remark}
In our current setting, the results of \cite{haposh} imply that the circle bundle is non-trivial.
In particular, if $f$ is partially hyperbolic and isotopic to the identity, then Theorem \ref{leafinvariance} applies to any horizontal $f$-invariant lamination. 
\end{remark}

\section{No periodic points revisited}
 
In this section, we prove Theorem \ref{maintheorem} in the case that $f$ has
no periodic points.
This proof does not rely on the announced result of R.~Saghin and J.~Yang
mentioned in the introduction and proves the stronger property of
accessibility instead of ergodicity.

The idea of the proof is as follows.
We assume that $f$ is not accessible and derive a contradiction.
First, we adapt techniques developed by J.~Zhang in the setting of
neutral center \cite{jz}
to show that our diffeomorphism satisfies the hypotheses of the
following theorem of C.~Bonatti and A.~Wilkinson.

\begin{theorem}[Theorem 2 of \cite{bowi}]\label{bowithm}
    Let $f$ be a partially hyperbolic dynamically coherent transitive
    diffeomorphism on a compact 3-manifold $M$
    and let $\Fs, \Fcs, \Fcu, \Fu, \Fc$ be the invariant foliations of $f$.
    Assume that there is a closed center leaf $\ga$
    which is periodic under $f$ and such that each center leaf in
    $W^{s}_{loc}(\ga)$ is periodic for $f$.
    Then:
    \begin{enumerate}
        \item
            there is an $n \in \NN$ such that $f^n$
            sends every center leaf to itself.
        \item
            there is an $L > 0$ such that for any $x \in M$
            the length $d_c(x, f^n(x))$ of the smaller center segment
            joining $x$ to $f(x)$ is bounded by $L$.
        \item
            each leaf $\mathcal{L}^{cu}$ of $\mathcal{F}^{cu}$
            is a cylinder or a plane
            (according to whether it contains a closed
            center leaf or not)
            and is trivially bi-foliated by $\mathcal{F}^c$
            and $\mathcal{F}^{u}$.
        \item
            the center foliation supports a continuous flow
            conjugate to a transitive expansive flow.
    \end{enumerate}
\end{theorem}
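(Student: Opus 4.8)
I would prove the four items in the order stated, since each one feeds the next: item (1) turns the local periodicity hypothesis along $W^{s}_{loc}(\ga)$ into the global statement that a single iterate $f^{n}$ fixes every center leaf, item (2) extracts uniform geometric control of the center displacement, and items (3) and (4) then read off the topology of the $\Fcu$-leaves and reparametrize the center foliation into a flow. Throughout, the essential structural input is dynamical coherence, which guarantees that the stable holonomy inside an $\Fcs$-leaf and the unstable holonomy inside an $\Fcu$-leaf carry center leaves to center leaves; this is what lets the dynamics interact with the center foliation in a controlled way.

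\textbf{Item (1).} Write $g = f^{n_{0}}$ with $g(\ga)=\ga$. The compact center leaf $\ga$ is a normally hyperbolic invariant circle for $g$, its normal bundle $E^{s}\oplus E^{u}$ being hyperbolic; hence it has strong stable and strong unstable manifolds, and by coherence the contracting $s$-holonomy inside $\Fcs(\ga)$ preserves $\Fc$. Combined with the hypothesis that every center leaf meeting $W^{s}_{loc}(\ga)$ is periodic, I would show the period is locally constant transverse to $\Fc$ near $\ga$, so that after passing to a further iterate $f^{n}$ the set $P$ of $f^{n}$-invariant center leaves contains a full neighborhood of $\ga$. The set $P$ is closed, by continuity of $f^{n}$ on the center-leaf space, and fully $f$-invariant, since $f$ commutes with $f^{n}$. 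A transitive $f$ has a dense orbit; that orbit meets the open set $P$, so by invariance its whole (dense) orbit lies in $P$, and closedness gives $P=M$. This proves (1), with the uniform $n$ inherited from the compact seed $\ga$.

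\textbf{Items (2), (3), (4).} Now $f^{n}$ fixes every center leaf, each leaf being a circle or a line. For (2) I would bound the center displacement $\delta(x)=d_{c}(x,f^{n}(x))$: it is finite everywhere because the leaf through $x$ is $f^{n}$-invariant, and the only obstruction to a uniform bound is that the shorter center arc can jump where center leaves wind with large holonomy. I would rule this out using the transverse hyperbolicity together with the normal hyperbolicity of the invariant compact leaves \cite{hps}, which forces equicontinuity of $f^{n}$ along $\Fc$ and hence a uniform $L$. With this bound in hand, each $\Fcu$-leaf $\mathcal{L}^{cu}$ carries two transverse one-dimensional foliations, $\Fc$ and $\Fu$ (the latter always by lines); the length bound from (2) kills center holonomy inside $\mathcal{L}^{cu}$, so a Reeb-stability argument gives a trivial product: a cylinder $S^{1}\times\RR$ when $\mathcal{L}^{cu}$ contains a closed center leaf and a plane $\RR^{2}$ otherwise, proving (3). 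Finally, for (4) the uniformly bounded center foliation can be reparametrized at unit speed, respecting the fixed orientation, to give a continuous flow $\phi_{t}$ whose orbits are the center leaves; transitivity of $f$ and the density of the center leaves it permutes yield a dense $\phi_{t}$-orbit, while two points on distinct, non-asymptotic center leaves are separated along $\Fs$ or $\Fu$, giving expansiveness.

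\textbf{Main obstacle.} The crux is the uniform estimate in item (2). Finiteness of the displacement on each leaf is formal, but boundedness is not: it is exactly here that one must combine the compact-foliation dichotomy (in the spirit of Epstein and Sullivan, controlling leaves of unbounded holonomy) with the hyperbolic transverse dynamics, rather than argue by soft topology. Every subsequent statement—the product structure of the $\Fcu$-leaves and the very definition of a well-posed expansive flow—rests on this bound, so I expect it to be both the technical heart and the step most likely to require the full strength of the partially hyperbolic hypotheses.
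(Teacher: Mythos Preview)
The paper does not prove this theorem. It is quoted verbatim as ``Theorem 2 of \cite{bowi}'' (Bonatti--Wilkinson) and used as a black box: the lemmas in Section~6 (Lemmas~\ref{completeness}--\ref{center.invariance}) are devoted to \emph{verifying the hypotheses} of Theorem~\ref{bowithm} in the specific setting of a non-accessible, volume-preserving, periodic-point-free diffeomorphism on a circle bundle, after which the paper simply invokes the theorem and moves on to Lemma~\ref{regulating}. There is therefore no ``paper's own proof'' of this statement to compare your proposal against.

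That said, your sketch is a reasonable outline of how the Bonatti--Wilkinson argument actually runs in \cite{bowi}, and your identification of item~(2) as the technical heart is accurate. One correction worth flagging: in item~(1) your closed-and-open argument is not quite right as written. You argue that the set $P$ of center leaves fixed by $f^{n}$ is closed and $f$-invariant, and that transitivity forces $P=M$ because a dense orbit meets the \emph{open} set $P$. But you have not shown $P$ is open; you only showed it contains a neighborhood of $\ga$. The actual mechanism in \cite{bowi} is to propagate the local periodicity along unstable manifolds (using coherence and the fact that $u$-holonomy sends center leaves to center leaves), then use minimality of the strong unstable foliation coming from transitivity to saturate all of $M$. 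Your argument could be repaired along these lines, but as stated the openness of $P$ is asserted rather than proved.
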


In fact, the flow is a topological Anosov flow, that is, an expansive flow without the singular orbits present in pseudo-Anosov flows. 

Consider a foliation $\mathcal F$ and its lift
to the universal cover.
The foliation is \emph{uniform}
if any two leaves of the lifted foliation are at finite Hausdorff distance.
The foliation is $\RR$-\emph{covered}
if the space of leaves of the lifted foliation is homeomorphic to $\RR$.
A flow $\phi$ is \emph{regulating} for $\mathcal F$ if $\phi$ is transverse to $\mathcal F$
and any lifted leaf intersects any lifted orbit of the flow.

In our setting, the fact that there are no periodic
points implies that there is a $us$-foliation
and we show that the flow in the center direction regulates this
foliation.
The following result of S.~Fenley then shows that $M$ cannot be a circle bundle over a higher-genus surface
and provides the needed contradiction.

\begin{theorem}[\cite{fe1,fe2}]\label{fenleythm} Let $\phi$ be a topological Anosov flow on a 3-manifold $M$ which is regulating for a uniform $\mathbb R$-covered foliation in $M$. Then, $M$ has virtually solvable fundamental group. 
\end{theorem}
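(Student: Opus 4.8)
The plan is to show that the two hypotheses force $\phi$ to be topologically equivalent to a suspension Anosov flow; granting this, $M$ is, up to a finite cover, the mapping torus of a hyperbolic automorphism of $T^{2}$, so $\pi_{1}(M)$ has a finite-index subgroup which is an extension of $\ZZ$ by $\ZZ^{2}$ and hence solvable, and therefore $\pi_{1}(M)$ is virtually solvable.

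First I would set up the picture in the universal cover $\tilde M$. Since $\phi$ is a topological (non-singular) Anosov flow, $\tilde M$ is homeomorphic to $\RR^{3}$, the orbit space $\cO$ of the lifted flow $\tilde\phi$ is homeomorphic to $\RR^{2}$, and $\cO$ carries two transverse one-dimensional foliations $\cO^{s}$, $\cO^{u}$ induced by the weak stable and unstable foliations of $\phi$ (the Fenley--Barbot description of (pseudo-)Anosov orbit spaces). Because $\phi$ is a flow transverse to $\cF$, the flow direction co-orients $\cF$, so the leaf-space coordinate of the lifted, $\RR$-covered foliation $\tilde\cF$ is strictly monotone along every orbit of $\tilde\phi$; hence each orbit meets each leaf of $\tilde\cF$ in at most one point, and, since $\phi$ is regulating, in exactly one point. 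Projecting along flow lines therefore identifies each leaf of $\tilde\cF$ homeomorphically with $\cO$, and assembling these identifications over the leaf space $\RR$ of $\tilde\cF$ produces a homeomorphism $\tilde M \cong \cO \times \RR$ sending the orbits of $\tilde\phi$ to the verticals and the leaves of $\tilde\cF$ to the horizontals. The deck group $\Gamma = \pi_{1}(M)$ acts on this product: on the $\cO$-factor by the usual orbit-space action, which preserves $\cO^{s}$ and $\cO^{u}$, and on the $\RR$-factor by the holonomy of $\cF$ on its leaf space.

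The core of the proof, and the step I expect to be the main obstacle, is to exploit uniformity of $\cF$ to determine the structure of the bifoliated plane $(\cO,\cO^{s},\cO^{u})$. Transporting the hypothesis that two leaves of $\tilde\cF$ lie at finite Hausdorff distance in $\tilde M$ through the product structure above bounds, uniformly over $\cO$, the flow time needed to cross from one leaf to the next; as $\Gamma$ acts cocompactly by isometries on $\tilde M$, this rigidity is incompatible both with branching in the leaf spaces of $\Fs$ and $\Fu$ and, once those leaf spaces are seen to be $\RR$, with the \emph{skewed} configuration of $(\cO,\cO^{s},\cO^{u})$. Carrying this out is exactly the analysis of \cite{fe1,fe2}: one first checks that intersecting $\tilde\cF$ with the weak stable (resp. unstable) leaves gives uniform one-dimensional foliations, then that $\Fs$ and $\Fu$ are $\RR$-covered, and finally that $\phi$ is an $\RR$-covered Anosov flow of \emph{product} type, i.e. $(\cO,\cO^{s},\cO^{u})$ is the standard horizontal/vertical bifoliation of $\RR^{2}$.

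Once $\phi$ is known to be a product-type $\RR$-covered Anosov flow, I would invoke the classification of such flows: up to finite covers and orbit equivalence they are suspensions of linear Anosov automorphisms of $T^{2}$. Hence $M$ is finitely covered by a torus bundle over $S^{1}$, its fundamental group has a finite-index subgroup that sits in $1 \to \ZZ^{2} \to \pi_{1} \to \ZZ \to 1$, and so $\pi_{1}(M)$ is virtually solvable, as claimed. The outer steps here are formal consequences of the orbit-space machinery for Anosov flows and of $3$-manifold topology; the genuinely hard content is the uniformity-to-product reduction of the third paragraph.
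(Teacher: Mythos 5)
Your outer reductions are fine: regulating plus transversality does give the product structure $\tilde M \cong \cO \times \RR$ with the deck group acting factorwise, and if one knew that $(\cO,\cO^{s},\cO^{u})$ were the product bifoliation, then Barbot's classification (product $\RR$-covered Anosov flows are, up to finite cover and topological equivalence, suspensions) would indeed yield a torus bundle and hence virtually solvable $\pi_1(M)$. The problem is that the entire mathematical content of the theorem sits in the step you label as the main obstacle, and there you give no argument at all: the claim that uniformity of $\cF$ is ``incompatible with branching in the leaf spaces of $\Fs$ and $\Fu$'' and then ``with the skewed configuration'' is simply asserted, with the justification that ``carrying this out is exactly the analysis of \cite{fe1,fe2}.'' That attribution is not accurate, so the citation cannot close the gap for you. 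Fenley's argument (the one this paper sketches, following Sections 4 and 5 of \cite{fe2}) never establishes that the flow is $\RR$-covered, let alone of product type, and it does not proceed via a product/skewed dichotomy.

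What the cited analysis actually does is split according to the conformal type of the leaves of $\cF$: if some leaf is parabolic, \cite[Theorem 2.1]{fe2} already gives virtually solvable fundamental group; if all leaves are hyperbolic, one identifies the orbit space $\cO$ with the Poincar\'e disk, shows the induced stable and unstable foliations are quasigeodesic with continuous ideal-point maps $a^{u}_{\pm}, a^{s}_{\pm}$ and no shared ideal points, takes a deck transformation $g$ fixing a point $p\in\cO$ coming from a closed orbit (with exactly four fixed points on $\partial\cO$), and shows the ranges $A^{u}_{\pm}$ are open, $\pi_1$-invariant and fill out exactly the two components of $\partial\cO\setminus\{a^{s}_{-}(p),a^{s}_{+}(p)\}$; this forces every element of $\pi_1(M)$ to fix $p$, which is impossible. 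So the hyperbolic-leaf case ends in a contradiction rather than in a classification of the flow, and the conclusion of the theorem comes from the parabolic case. If you want to keep your route, you must actually prove that a regulating flow for a uniform $\RR$-covered foliation has Hausdorff (non-branching) orbit-space leaf spaces and cannot be skewed — neither statement is in \cite{fe1,fe2} in the form you need — and you should also check that the product-implies-suspension step is available for merely topological Anosov flows. As it stands, the heart of the proof is missing.
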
 

The statement of this theorem does not appear in S.~Fenley's papers but can be deduced from his arguments. For the sake of completeness, we will include a brief sketch of how the theorem is deduced from his work. 
The below sketch is based on the work in sections 4 and 5
of \cite{fe2}, but note that we are working with a topological Anosov
flow instead of a pseudo-Anosov flow
and that we know the foliation is uniform.

\begin{proof}[Sketch of the proof of Theorem \ref{fenleythm}]
%
%
    Let $\tilde \phi$ be the lift of the topological An\-osov flow $\phi$
    to the universal cover and
    let $\tilde{\mathcal F}$ be the lift of the regulated foliation.
    There are no spherical leaves
    and since \cite[Theorem 2.1]{fe2} implies the desired result
    if there is a parabolic leaf,
    we may assume that every leaf of the foliation is hyperbolic.

    Under these hypotheses,
    Fenley \cite{fe2} shows that
    the space of orbits, $\mathcal O$,
    can be identified with the Poincar\'e disk and
    each of the weak stable and unstable foliations of the lifted flow
    defines a one-dimensional foliation on $\mathcal O$.
    Call these $\tilde\Lambda^s$ and $\tilde\Lambda^u$.
    The leaves of these foliations satisfy the following properties:
\begin{enumerate}
\item They are quasigeodesic; that is, each leaf is a bounded distance from a geodesic of the Poincar\'e disk (\cite[Lemma 4.1]{fe2}).
\item \label{injectivity}
    If $l_1$ and $l_2$ are distinct leaves of $\tilde\Lambda^u$
        (or of $\tilde\Lambda^s$),
    they do not share an ideal point in $\partial \mathcal O$
    (\cite[Lemma 5.2]{fe2}).
\item Given $x\in \mathcal O$, call $l^u (x)$ to the leaf of $\tilde\Lambda^u$ through $x$. Consider an orientation of $\tilde\Lambda^u$. The ray $l^u(x)$ has two ideal points $a^u_+(x)$ and $a^u_-(x)$ corresponding to the positive and the negative direction of $l^u(x)$. The maps $x\mapsto a^u_\pm(x)$ are continuous (\cite[Lemma 5.1]{fe2}).
Analogous statements hold for $\tilde\Lambda^s$.
\end{enumerate}

The equivariance of $\tilde \phi$ implies that the fundamental group of $M$ acts on $\mathcal O$.
The action of the elements of $\pi_1(M)$ extends to homeomorphisms of $\mathcal O\cup\partial \mathcal O$.
    Consider $g\in\pi_1(M)$ with a fixed point $p = g(p) \in \cO$
    corresponding to a closed orbit of $\phi$.
    As shown by \cite[Proposition 5.3]{fe2},
    up to replacing $g$ by an iterate,
    we may assume $g$ has at least four fixed points on
    the circle $\partial \mathcal O$ at infinity.
    Moreover, these fixed points correspond exactly
    to the prongs of the stable and unstable manifolds of $p$.
    Since we are in the topologically Anosov setting
    (instead of the pseudo-Anosov setting),
    there are exactly four fixed points on $\partial \cO$
    given by
    $a^u_+(p)$,
    $a^u_-(p)$,
    $a^s_+(p)$, and
    $a^s_-(p)$.
    Under the action of $g$,
    $a^u_+(p)$ and
    $a^u_-(p)$ are attracting on $\partial \cO$
    whereas
    $a^s_+(p)$ and
    $a^s_-(p)$ are repelling.
    Again, see \cite[Proposition 5.3]{fe2} for details.

Items (2) and (3) above show that
the function $a^u_+ : \cO \to \partial \cO$ is continuous
and injective on each leaf of $\tilde\Lambda^s$, and therefore its range $A^u_+$
is an open subset of $\partial \cO$.
In particular, $A^u_+$ contains an interval centered about $a^u_+(p)$.
Since $A^u_+$ is invariant under the fundamental group,
it follows that $A^u_+$ must contain the entire basin of the
attracting fixed point 
$a^u_+(p)$ under the action of $g$.
That is, $A^u_+$ contains one of the two connected components
of $\partial \cO \setminus \{ a^s_-(p), a^s_+(p) \}$.
Similar reasoning shows that
the range $A^u_-$ of $a^u_-$ contains the other connected component
of $\partial \cO \setminus \{ a^s_-(p), a^s_+(p) \}$.
Item (2) implies that $A^u_+$ and $A^u_-$ are disjoint
and so they must exactly equal these two connected components.

As $A^u_+$ and $A^u_-$ are invariant under the action of $\pi_1(M)$,
it follows that every element of $\pi_1(M)$
fixes the set $\{a^s_-(p), a^s_+(p)\}$.
By item (2) above, this means that every element
of $\pi_1(M)$ fixes the stable leaf through $p$.
A similar analysis shows that every element fixes the unstable leaf
through $p$.
For two transverse foliations on a disk,
such an intersection must be unique,
so $p$ is fixed by every element of the fundamental group.
This is not possible for a topological Anosov flow and gives a contradiction.
\end{proof}

 

 
 
\smallskip

We now begin the detailed proof of
Theorem \ref{maintheorem} in the case that $f$ has
no periodic points.
Assume for the remainder of the section that
$M$ is a circle bundle and
$f : M \to M$ is a  volume preserving, partially hyperbolic
diffeomorphism homotopic to the identity
which is not accessible and has no periodic points.

Note that $f$ is transitive by Lemma \ref{usdense} and dynamically coherent by Theorem \ref{integrable}. As usual we define $ W^{s}(A)=\cup_{x\in A}\Fs(x)$. We say that the center stable foliation is {\em complete} if the $W^{s}(\Fc(x))=\Fcs(x)$ $\forall x\in M$. Of course we define the completeness of the center-unstable foliation in an analogous way. 

Consider a center leaf $L^{c}$ lying inside a center-stable leaf $L^{cs}$ and such that its stable saturation
$W^s(L^{c})$ is a proper subset of $L^{cs}$. 
A point $y$
is an {\em accessible boundary point} of $W^s(L^{c})$
if there exists a
curve $\gamma : [0, 1] \to M$ tangent to the center bundle such that
$\gamma([0, 1)) \subset W^{s}(L^c)$ and $\gamma(1) = y \notin W^{s}(L^c)$.
The set of such points is called the {\em accessible
boundary} of $W^s(L^c)$. Bonatti and Wilkinson \cite{bowi} showed that the accessible boundary is $s$-saturated. 

Our first step is to prove the completeness of the center-stable and center-unstable foliations.

\begin{lemma}\label{completeness} The foliations $\mathcal F^{cs}$ and $\Fcu$ are complete.
\end{lemma}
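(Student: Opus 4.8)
The plan is to establish completeness for $\Fcs$; the case of $\Fcu$ follows by applying the same argument to $f^{-1}$. I would argue by contradiction, the goal being to produce from a failure of completeness an object that the hypotheses forbid: either a periodic point, or a compact leaf of $\Gamma(f)$ — the latter being impossible by Theorem \ref{anosov.tori}, since $M$ is a circle bundle over a surface of negative Euler characteristic. Two facts will be used repeatedly. First, since $f$ has no periodic points the center bundle is \emph{neutral}: arguing exactly as in Lemmas \ref{centerzero} and \ref{lemma.cero}, $\tfrac1n\log\|Df^{\pm n}_x|_{E^c}\|\to 0$ uniformly in $x\in M$. Second, by Proposition \ref{sublamination} together with the absence of periodic points, $M$ contains no non-empty proper closed $f$-invariant $us$-saturated subset; recall also that $\Gamma(f)=M$ is an $f$-minimal $us$-foliation by Lemma \ref{usdense}.

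Suppose $\Fcs$ is not complete, so there is a point $x$ with $W:=W^s(\Fc(x))\subsetneq\Fcs(x)=:L^{cs}$. The surface $L^{cs}$ is bi-foliated by the traces of $\Fc$ and $\Fs$, and local product structure along the stable leaves contained in $L^{cs}$ shows that $W$ is open in $L^{cs}$; hence $W$ has non-empty accessible boundary, which by the theorem of Bonatti and Wilkinson quoted above is $s$-saturated. Fix a point $y$ in this accessible boundary together with a center curve $\gamma:[0,1]\to M$ with $\gamma([0,1))\subset W$ and $\gamma(1)=y\notin W$.

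The core of the argument is a renormalization step. For each $n$, the curve $f^{n}\circ\gamma$ realizes $f^{n}(y)$ as an accessible boundary point of $f^{n}(W)=W^s(\Fc(f^{n}(x)))$, and the neutrality of $E^c$ guarantees that the curves $f^{n}\circ\gamma$ have subexponentially growing length, which gives uniform control of the center segments that realize the accessible boundary along the orbit. Choosing $n_i\to\infty$ with $f^{n_i}(x)$ and $f^{n_i}(y)$ convergent and invoking continuity of the invariant foliations, I would pass to a limiting center leaf $\ell^{c}$ lying in the limit of the accessible boundaries, and then argue that from $\ell^{c}$ one obtains a closed, non-empty, proper, $f$-invariant, $us$-saturated subset of $M$: closedness and $f$-invariance from the limiting procedure together with the $f$-minimality of $\Gamma(f)$, and properness from the observation that a $us$-leaf through a point of $\Fc(x)$ cannot contain a point of the accessible boundary of $W$, since its trace on $L^{cs}$ is contained in $W$. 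This contradicts Proposition \ref{sublamination}; in the degenerate case where the limit is a compact leaf of $\Gamma(f)$, the contradiction comes instead from Theorem \ref{anosov.tori}.

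I expect the renormalization step to be the main obstacle. Since the center leaf $\Fc(x)$ need not be $f$-invariant, passing to an iterate and a limit is unavoidable, and one must check both that the accessible-boundary relation genuinely persists in the limit — this is where neutrality of the center is essential, as it prevents the realizing center curves from collapsing or blowing up under iteration — and that the limiting invariant $us$-saturated set neither exhausts $M$ nor degenerates, so that the dichotomy of Proposition \ref{sublamination} actually applies. The verification that $W$ is open in $L^{cs}$ and the bookkeeping with the traces of $\Fc$ and $\Fs$ on $L^{cs}$ are routine by comparison.
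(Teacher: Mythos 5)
Your reduction to $\Fcs$ and your use of the accessible boundary (which is $s$-saturated, by Bonatti--Wilkinson) match the paper's setup, but the core of your argument --- producing a closed, nonempty, \emph{proper}, $f$-invariant, $us$-saturated set and contradicting Proposition \ref{sublamination} --- cannot work in this setting. By Lemma \ref{usdense}, $\Gamma(f)=M$ and the $us$-foliation is $f$-minimal: the orbit of \emph{every} $us$-leaf is dense. Hence any nonempty closed $f$-invariant $us$-saturated set automatically equals $M$, so properness is unattainable no matter which limiting center leaf you start from; your limit-and-saturate construction simply reproduces $M$ and yields no contradiction. The properness argument you offer is also unjustified on its own terms: the intersection of a $us$-leaf with $L^{cs}$ is a union of stable leaves, not just the local stable plaque through the chosen point of $\Fc(x)$, and since the accessible boundary is itself $s$-saturated nothing prevents a $us$-leaf through a point of $\Fc(x)$ from containing entire stable leaves of that boundary. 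A secondary issue is your appeal to neutrality of the center: subexponential growth of $\|Df^n_x|_{E^c}\|$ gives no uniform control of the realizing center segments, and in fact the relevant phenomenon goes in the opposite direction --- those segments must become long.

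The paper's proof gets its contradiction from a periodic point directly, via a quite different mechanism. Uniform transversality gives $\delta>0$ such that the $\delta$-neighborhood of $\Fc(x)$ inside $\Fcs(x)$ lies in $W^s(\Fc(x))$ for every $x$; since $f^m(y)$ remains an accessible boundary point while $f^m(\gamma(0))$ remains in the stable saturation, the center curves realizing the boundary are forced to \emph{grow} beyond length $\delta/2$ under iteration. One then takes a recurrent point $z\in\omega(y)$ and a foliated chart $U$ of the $us$-foliation; no two iterates of $z$ in $U$ can lie in the same plaque (otherwise the Anosov closing lemma gives a periodic point), so the returns are ordered in the interval of plaques. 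Choosing three suitable returns, using the short-versus-long boundary-realizing center arcs, and projecting locally along the $us$-foliation, one obtains a map of an interval and, by an intermediate-value argument, a point whose iterate lies in the same $us$-leaf; the Anosov closing lemma then produces a periodic point, the desired contradiction. If you want to salvage your outline, it is this interval/closing-lemma step that must replace your limiting construction.
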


\begin{proof}
The uniform transversality of the foliations implies the existence of $\delta>0$ such that the $\delta$-neighborhood of $\Fc(x)$ in $\Fcs(x)$  is contained in $ W^s(\Fc(x))$ for all $x\in M$. 

Assume by contradiction that there is $x$ such that $  W^s(\Fc(x))\neq W^{cs}(x)$. Then, there is an accessible boundary point $y$ with respect to $  W^s(\Fc(x))$. By definition, there is a center curve $\gamma : [0, 1] \to M$ such that
$\gamma([0, 1)) \subset  W^s(\Fc(x))$ and $\gamma(1) = y$. On the one hand, we can assume that $\gamma$ is as short as desired. On the other hand, taking any $m>0$  large enough we get that the length of $f^m(\gamma)$ is greater than $\delta/2$. This is a consequence of the fact that  $f^m(y)$ is also an accessible boundary point  with respect to $  W^s(\Fc(f^m(x)))$ and $\gamma(0)\in W^s(\Fc(x))$. In fact, the same is true if we start with any iterate $f^k(y)$, we mean there is a short center curve contained in $ W^s(\Fc(f^k(x)))$ and ending at $f^k(y)$ that from a sufficiently long iterate its length is greater than $\delta/2$.

\begin{figure}
\begin{center}
\includegraphics[width=.7\textwidth]{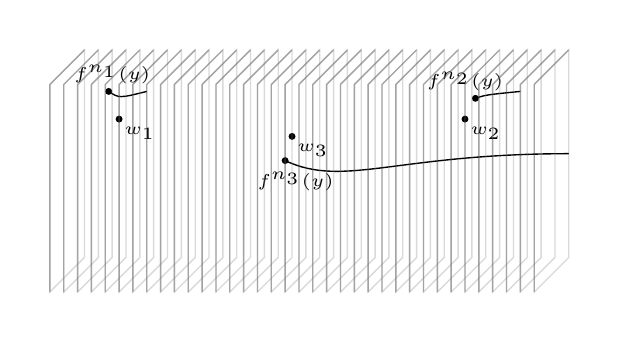}
\caption{\label{figure1} Center segments in the proof of Lemma \ref{completeness}.} 
\end{center}
\end{figure}

Take a recurrent point $z\in \omega(y)$ and let $U$ be a small foliated neighborhood of $z$ for the $us$-foliation. Observe that all iterates of $z$ belonging to $U$ are in different plaques of the foliation, otherwise Anosov closing lemma would imply the existence of a periodic point. The space of plaques in $U$ is homeomorphic to an interval, then we can order it and this induces an order on the iterates of $z$. Take three different iterates of $z$, $w_1<w_3<w_2$ such that $\dist_c (w_3, w_i)<\delta/16$, $i=1,2$,  where $\dist_c$ is the infimum of the length of center curves joining the corresponding plaques. Take now $0<n_1<n_2<<n_3$ such that $\dist_c(f^{n_i}(y), w_i)<\delta/16$. Taking $n_3$ large enough and since $f^{n_i}(y), \, i=1,2$ are accessible boundary points we have center arcs of length smaller that $\delta/16$ with end point $f^{n_i}(y), \, i=1,2$ such that after iterating $n_3-n_i, \, i=1,2$ we get the curve lengths grow larger than $\delta/2$. Now,
projecting locally via the $us$-foliation we obtain a map of the interval and as a
consequence there is a point $e$ in a $us$-leaf such that the $(n_3-n_i)$-iterate is in the same
$us$-leaf for $i=1$ or $2$. The Anosov closing lemma implies the existence of a periodic point and we arrive to a contradiction. 
\end{proof}

The center stable $\mathcal F^{cs}$ and center unstable $\mathcal F^{cu}$ foliations are Reebless \cite{bbi}. In particular, we have that all leaves of their lifts, $\tilde{\mathcal F}^{cs}$ and $\tilde{\mathcal F}^{cu}$,  to the universal cover $\tilde M$ are planes. The argument of the previous lemma applies to the universal cover and we obtain the following:

\begin{lemma}[Claim 3.2, \cite{jz}]\label{triv_foliated} 
$\tilde{\mathcal F}^{cs}$ is complete, that is, all its leaves are trivially bi-foliated by $\tilde{\mathcal F}^{s}$ and $\tilde{\mathcal F}^{c}$.
\end{lemma}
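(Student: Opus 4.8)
The plan is to run the proof of Lemma \ref{completeness} in the universal cover $\tilde M$, using a good lift $\tilde f$ of $f$ (which exists since $f$ is homotopic to the identity). Recall that, $\mathcal F^{cs}$ being Reebless, every leaf $P$ of $\tilde{\mathcal F}^{cs}$ is a plane carrying the two transverse one-dimensional foliations $\tilde{\mathcal F}^{s}|_{P}$ and $\tilde{\mathcal F}^{c}|_{P}$; a standard argument (as in \cite{jz}) shows that $P$ is trivially bi-foliated as soon as the completeness property $W^{s}(\tilde C)=P$ holds for every center leaf $\tilde C\subset P$. So it suffices to prove this completeness in $\tilde M$, and I would argue by contradiction: if $W^{s}(\tilde C)\subsetneq P$ for some center leaf $\tilde C=\tilde{\mathcal F}^{c}(\tilde x)$, then, exactly as in Lemma \ref{completeness}, $W^{s}(\tilde C)$ has an accessible boundary point $\tilde y\in P$, reached by an arbitrarily short center curve $\tilde\gamma\colon[0,1]\to P$ with $\tilde\gamma([0,1))\subset W^{s}(\tilde C)$ and $\tilde\gamma(1)=\tilde y\notin W^{s}(\tilde C)$.

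The stretching mechanism transfers verbatim. By Bonatti--Wilkinson the accessible boundary is $\tilde{\mathcal F}^{s}$-saturated, and $\tilde f$ maps the accessible boundary of $W^{s}(\tilde C)$ onto that of $W^{s}(\tilde f(\tilde C))$, where $\tilde f(\tilde C)$ is again a center leaf lying in the $\tilde{\mathcal F}^{cs}$-leaf $\tilde f(P)$. Pulling back the metric, uniform transversality still yields a uniform $\delta>0$ such that the $\delta$-neighbourhood of a center leaf inside its center-stable leaf is contained in the stable saturation of that center leaf. Since a short center arc ending at an accessible boundary point starts inside the relevant stable saturation, its forward iterates come stably close to the corresponding center leaf, and hence, after sufficiently many iterates, the arc must grow to length larger than $\delta/2$ --- otherwise its endpoint, still an accessible boundary point, would lie in the $\delta$-neighbourhood and thus in the stable saturation. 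Projecting these arcs to $M$, we conclude that for every $n$ there is a short center arc ending at $f^{n}(y)$, with $y=\pi(\tilde y)$, which is stretched to length $>\delta/2$ after finitely many further iterates. (By Lemma \ref{lemma.cero} the center is uniformly neutral, so this growth is sub-exponential but, by the above, unavoidable.)

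It remains to derive a contradiction, and here I would replace Poincar\'e recurrence --- unavailable on $\tilde M$ --- by recurrence of the orbit of $y$ in $M$. This is not in conflict with the completeness of $\mathcal F^{cs}$: downstairs the projected arc even lies entirely inside $W^{s}(\Fc(f^{n}x))$, but the stretching was established upstairs --- where $\tilde f^{n}(\tilde y)$ genuinely lies on the accessible boundary --- and is a statement about lengths of center curves, hence survives projection. (One can also exhibit a nontrivial deck transformation $g$ with $g(P)=P$ and $g(W^{s}(\tilde C))\ne W^{s}(\tilde C)$, showing that the failure of completeness on $P$ is visible along a single $\pi_1(M)$-orbit.) Now, as in Lemma \ref{completeness}, take a small $us$-foliated box $U$ meeting $\omega(y)$; since $f$ has no periodic points, the Anosov closing lemma forces all returns of the orbit of $y$ to $U$ to lie in distinct plaques. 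Ordering the plaques and choosing three returns $w_{1}<w_{3}<w_{2}$ with $w_{3}$ close to $w_{1}$ and $w_{2}$, and times $0<n_{1}<n_{2}\ll n_{3}$ with $f^{n_{i}}(y)$ close to $w_{i}$, the short center arcs ending at $f^{n_{1}}(y)$ and $f^{n_{2}}(y)$, stretched under $f^{n_{3}-n_{1}}$ and $f^{n_{3}-n_{2}}$ respectively, together with the local $us$-projection inside $U$, produce an interval map one of whose points has its $(n_{3}-n_{i})$-iterate on the same $us$-leaf (for $i=1$ or $2$). The Anosov closing lemma then produces a periodic point of $f$, a contradiction. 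Hence every leaf of $\tilde{\mathcal F}^{cs}$ is complete, i.e.\ trivially bi-foliated.

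The step I expect to be the main obstacle is this transfer to the base: one must check carefully that the lengths of the short center arcs and their stretching are lift-independent, that the incompleteness on the plane $P$ --- which projects to an unobstructed picture in $M$ --- is genuinely detected along the orbit of $y$, and that the plaque-ordering and intermediate-value step of Lemma \ref{completeness} can then be carried out inside a single box $U\subset M$ so that the Anosov closing lemma applies. Everything else is a routine adaptation of the proof of Lemma \ref{completeness}.
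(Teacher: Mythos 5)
Your proposal is correct and is essentially the paper's own argument: the paper disposes of this lemma with the one-line remark that ``the argument of the previous lemma applies to the universal cover'' (citing Claim 3.2 of \cite{jz}), i.e.\ run the accessible-boundary-point and stretching mechanism of Lemma \ref{completeness} for the lifted foliations on the plane leaves of $\tilde{\mathcal F}^{cs}$, and your write-up supplies exactly the details this entails --- the lifted uniform $\delta$, the growth of iterated center arcs ending at the lifted accessible boundary point, and the projection to $M$ so that recurrence, the plaque-ordering interval map, and the Anosov closing lemma can be applied downstairs to contradict the absence of periodic points. The only caveats are cosmetic (e.g.\ the parenthetical deck-transformation remark and the use of returns of $y$ rather than of a point $z\in\omega(y)$ as the ordered plaques), and they do not affect the argument.
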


We also need the following result from \cite{hhu4}.

\begin{theorem}[\cite{hhu4}]\label{nocompact}
$\tilde{\mathcal F}^{cs}$ has no compact leaves.
 \end{theorem}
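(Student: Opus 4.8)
The plan is to argue by contradiction. Suppose $\tilde{\mathcal F}^{cs}$ has a compact leaf. Since every leaf of $\tilde{\mathcal F}^{cs}$ is a plane (as noted just before the statement, the foliations are Reebless so their lifts have planar leaves), a compact leaf is impossible in the strict sense — so the statement really rules out leaves of $\mathcal F^{cs}$ downstairs that lift to something closed, i.e. it rules out compact leaves of $\mathcal F^{cs}$ in $M$ (a $2$-torus or Klein bottle). First I would reduce to this: a compact leaf $T$ of $\mathcal F^{cs}$ in $M$, being tangent to $E^{cs}$, is a two-dimensional manifold carrying a nonvanishing line field (the $E^c$ or $E^s$ direction), hence has zero Euler characteristic, so it is a torus or Klein bottle; passing to a double cover (Remark \ref{remark.orientability}) we may assume it is a torus.

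Next I would invoke the structure of the ambient manifold. We are in the setting where $M$ is a circle bundle over a hyperbolic orbifold. By Theorem \ref{anosov.tori}, if $\Gamma(f)$ has a compact leaf then $M$ is one of three specific manifolds (the $3$-torus, or a mapping torus over $\T^2$), none of which is a nontrivial circle bundle over a higher-genus surface — but that theorem is about $\Gamma(f)$, not about $\mathcal F^{cs}$. The cleaner route is: a compact leaf $T$ tangent to $E^{cs}$ is normally attracting or repelling along $E^u$ (since $E^u$ expands), so $T$ is a normally hyperbolic torus with Anosov-type dynamics on it induced by $f$; this makes $M$ admit an Anosov torus in the sense recalled after Theorem \ref{anosov.tori}, forcing $M$ into the list there and contradicting that $M$ is a circle bundle over a surface of negative Euler characteristic. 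The main point to nail down is that $f|_T$ (after passing to an iterate fixing $T$) is in fact Anosov: the restriction has an invariant expanding direction $E^u \cap TT = E^u$ transverse to $E^s \cap TT$; since $E^s$ is contracting and $E^u$ expanding and they span $TT$, the map is hyperbolic on $T$.

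The step I expect to be the main obstacle is making the reduction "compact leaf of $\tilde{\mathcal F}^{cs}$ $\Rightarrow$ compact leaf of $\mathcal F^{cs}$ downstairs that is a torus with Anosov dynamics'' fully rigorous: a priori a leaf of $\tilde{\mathcal F}^{cs}$ that is compact would already contradict planarity, so one must interpret the theorem as asserting $\mathcal F^{cs}$ has no closed leaf, and then the real content is showing such a closed leaf would be an $f$-invariant (up to iterate) torus on which $f$ acts as an Anosov diffeomorphism, which is incompatible with the Seifert geometry via Theorem \ref{anosov.tori} and the irreducibility/geometry results of \cite{hhu3}. Alternatively, and this is presumably what \cite{hhu4} actually does, one shows directly that completeness of $\tilde{\mathcal F}^{cs}$ (Lemma \ref{triv_foliated}) — every leaf trivially bi-foliated by $\tilde{\mathcal F}^s$ and $\tilde{\mathcal F}^c$, hence a product $\RR \times \RR$ — is inherited by any leaf and a product plane cannot cover a compact surface unless that surface is a torus, and a torus leaf of $\mathcal F^{cs}$ would be a vertical or horizontal torus in the circle bundle, contradicting Proposition \ref{ushorizontal} together with the absence of torus leaves established via Theorem \ref{anosov.tori}. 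I would present the argument in the latter form, citing \cite{hhu4} for the technical core and Theorem \ref{anosov.tori} for the geometric obstruction.
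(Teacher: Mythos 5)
You should first note what the paper itself does with this statement: it is not proved there at all, but quoted from \cite{hhu4} (``Center-unstable foliations do not have compact leaves''), whose proof occupies a separate research article. Your preliminary observation is fair — the tilde is inessential, the leaves upstairs are planes, and the statement as actually used (in the proof of Proposition \ref{minimality}) is that $\mathcal F^{cs}$ on $M$ has no compact leaves — so the real question is whether your sketch supplies a proof of that fact. It does not.

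The central step of your argument — that a compact leaf $T$ of $\mathcal F^{cs}$ would be an Anosov torus, so that Theorem \ref{anosov.tori} applies — is unjustified, and the justification you offer is incorrect: you write that $E^u\cap TT=E^u$ and that $E^s$ (contracting) and $E^u$ (expanding) span $TT$, but $T$ is tangent to $E^{cs}=E^c\oplus E^s$, so $E^u$ is \emph{transverse} to $T$, not contained in $TT$. The induced dynamics of an iterate fixing $T$ (whose existence itself needs an argument, since $f$ could a priori permute infinitely many compact leaves) preserves the splitting $E^c\oplus E^s$ of $TT$, and there is no reason for it to be hyperbolic: the center direction may be neutral. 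You have in effect confused compact leaves of $\mathcal F^{cs}$ with compact leaves of the $us$-lamination $\Gamma(f)$, which do carry Anosov dynamics by Proposition \ref{sublamination} and are exactly what Theorem \ref{anosov.tori} addresses. Your fallback route suffers from the same confusion — Proposition \ref{ushorizontal} is a statement about $\Gamma(f)$, not about $\mathcal F^{cs}$ — and in addition the geometric contradiction is not there: circle bundles over hyperbolic surfaces do contain incompressible vertical tori (preimages of essential simple closed curves in the base), so a torus leaf cannot be excluded by the Seifert structure alone; some genuinely dynamical argument about the behavior of $f$ near and on such a torus is required, and that is precisely the content of \cite{hhu4} (whose theorem, note, holds on arbitrary $3$-manifolds). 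Deferring ``the technical core'' to \cite{hhu4} at the end concedes the entire statement, so as written your proposal does not constitute an independent proof; the honest treatment, as in the paper, is simply to cite \cite{hhu4}.
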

 
We include the proof of the following lemma for the sake of completeness.

\begin{lemma}[Lemma 3.3, \cite{jz}]\label{cylinder} A center stable leaf containing a compact center leaf $\gamma$ is either a cylinder or a M\"obius band.
\end{lemma}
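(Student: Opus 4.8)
The plan is to reduce the lemma to the assertion that $\pi_1(L^{cs})$ is infinite cyclic: once that is known, $L^{cs}$ is a boundaryless surface with fundamental group $\mathbb{Z}$, hence an open cylinder $S^1\times\mathbb{R}$ or an open M\"obius band, which is the conclusion. I would start with the universal cover. Since $\mathcal{F}^{cs}$ is Reebless, $L^{cs}$ is $\pi_1$-injective in $M$ and its preimage components in $\tilde M$ are leaves of $\tilde{\mathcal{F}}^{cs}$; by Lemma \ref{triv_foliated} such a leaf is a plane trivially bi-foliated by $\tilde{\mathcal{F}}^s$ and $\tilde{\mathcal{F}}^c$. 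Thus I identify the universal cover $\widetilde{L^{cs}}$ with $\mathbb{R}^2=\mathbb{R}_x\times\mathbb{R}_y$, so that the lifted center foliation is the family of vertical lines and the lifted stable foliation the family of horizontal lines. The compact center leaf $\gamma$ lifts to a vertical line $\ell_0=\{x_0\}\times\mathbb{R}$; in particular $\gamma$ is not null-homotopic in $L^{cs}$ (a null-homotopic embedded loop would lift to an embedded circle), so $\Gamma:=\pi_1(L^{cs})\neq 1$, while by Theorem \ref{nocompact} the surface $L^{cs}$ is not compact, so $\Gamma$ is a non-trivial free group. It therefore suffices to prove $\Gamma\cong\mathbb{Z}$.

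The next step is to record the $\Gamma$-action. As deck transformations of the foliated covering $\widetilde{L^{cs}}\to L^{cs}$, the elements of $\Gamma$ permute the leaves of $\tilde{\mathcal{F}}^c$ and of $\tilde{\mathcal{F}}^s$, and preserving the two transverse line foliations of $\mathbb{R}^2$ forces each such homeomorphism to be of product type; hence $\Gamma$ acts on the center-leaf space $\mathbb{R}_x$ and on the stable-leaf space $\mathbb{R}_y$. Let $S_0=\operatorname{Stab}_\Gamma(\ell_0)$ be the subgroup fixing $\ell_0$, equivalently fixing $x_0\in\mathbb{R}_x$. Since $\ell_0\to\gamma$ is the universal covering of a circle, $S_0$ is infinite cyclic, generated by the class of $\gamma$. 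The lifts of $\gamma$ are exactly the vertical lines $g\ell_0$, $g\in\Gamma$, so their $x$-coordinates form the orbit $P=\Gamma\cdot x_0\subset\mathbb{R}_x$, on which $\Gamma$ acts transitively with point-stabilizer $S_0$. Using that a short stable transversal meets the center leaf $\gamma$ in a discrete set, I would verify that $P$ has no accumulation point, i.e.\ $P$ is a discrete closed subset of $\mathbb{R}$.

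The remaining step, $[\Gamma:S_0]<\infty$, is the one I expect to be the main obstacle. A discrete closed subset of $\mathbb{R}$ is order-isomorphic to a finite set, to $\mathbb{N}$, to $-\mathbb{N}$, or to $\mathbb{Z}$. If $P$ were infinite of order type $\mathbb{N}$ or $-\mathbb{N}$ it would have an extremal point, which every element of $\Gamma$ would then have to fix — no homeomorphism of $\mathbb{R}$, orientation-preserving or not, can preserve such a set while moving that point — contradicting transitivity of $\Gamma$ on the infinite set $P$. If $P$ had order type $\mathbb{Z}$, the action of $\Gamma$ on $P$ would factor through the infinite dihedral group $\operatorname{Isom}(\mathbb{Z})$, with kernel $N$ contained in $S_0$ (an element fixing a point of $P\cong\mathbb{Z}$ and preserving its order acts trivially on $P$) and $[S_0:N]\le 2$; then $N$ is infinite cyclic and $\Gamma$ is an extension of an infinite subgroup of $\operatorname{Isom}(\mathbb{Z})$ by $\mathbb{Z}$, hence contains a copy of $\mathbb{Z}^2$, which is impossible for a free group. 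Therefore $P$ is finite, so $S_0$ has finite index in $\Gamma$; by Nielsen--Schreier, a free group containing a copy of $\mathbb{Z}$ with finite index must itself be infinite cyclic, so $\Gamma\cong\mathbb{Z}$ and the lemma follows. Apart from this group-theoretic endgame, the points that will require care are the passage to a product-type action together with the treatment of orientation-reversing deck transformations, and the discreteness of $P$, which rests on the transversality of $\mathcal{F}^c$ and $\mathcal{F}^s$.
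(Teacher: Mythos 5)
Your argument is correct, but it follows a genuinely different route from the paper's. The paper's proof is a short two-case geometric argument: if every stable leaf meets $\gamma$ only once, then completeness (Lemma \ref{completeness}) gives $L^{cs}=W^s(\gamma)$, a line bundle over the circle $\gamma$, hence a cylinder or M\"obius band; if some stable leaf meets $\gamma$ twice, then in the universal cover a single leaf of $\tilde{\mathcal F}^{cs}$ contains two lifts of $\gamma$, and the trivial bi-foliation (Lemma \ref{triv_foliated}) forces the band between them to cover the whole leaf, making it compact and contradicting Theorem \ref{nocompact}. You instead compute $\pi_1(L^{cs})$ directly: identifying the universal cover of the leaf with a trivially bi-foliated plane, you let the deck group act on the center-leaf space $\mathbb{R}$, observe that the stabilizer of a lift of $\gamma$ is infinite cyclic, show the orbit of its coordinate is closed and discrete, and then run an order-type/group-theoretic analysis (no extremal point can be fixed by a transitive action; an orbit of type $\mathbb{Z}$ would force a $\mathbb{Z}^2$ inside a free group; a finite orbit plus Nielsen--Schreier gives $\pi_1\cong\mathbb{Z}$). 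Both proofs rest on the same two inputs, Lemma \ref{triv_foliated} and Theorem \ref{nocompact}, but yours is closer in spirit to the paper's Lemma \ref{cs.plane.cylinder} (deck-group actions on leaf spaces and abstract group theory), while the paper's is shorter and avoids the leaf-space machinery entirely. What your route buys is a uniform statement ($\Gamma\cong\mathbb{Z}$, so no case split on how stable leaves meet $\gamma$); what it costs is the two deferred verifications you flag -- that the trivial bi-foliation gives a genuine product homeomorphism so the action descends to the leaf spaces, and the discreteness of the orbit $P$, which needs both that $\gamma$ is compact and embedded (so it meets a small bi-foliated box in finitely many plaques) and that stable leaves inside $L^{cs}$ are simply connected, so a horizontal line projects injectively to its stable leaf. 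These details are completable, so there is no gap, but they make your proof noticeably longer than the one in the paper.
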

Since we are assuming that $E^{c}\oplus E^{s}$ is orientable, in fact, all these leaves are cylinders. 

\begin{proof} 
If each stable leaf intersects $\gamma$ only once it is not difficult to show, using Lemma \ref{completeness}, that the center stable leaf containing               $\gamma$ is either a cylinder or a M\"obius band.

If there is a stable leaf intersecting $\gamma$ twice we take a lift of the center stable leaf to the universal cover. We have at least two lifts of $\gamma$ in it. 
Completeness, Lemma \ref{triv_foliated}, gives that the image of the band limited by these two lifts is the whole center stable manifold and it is compact. We arrived to a contradiction. 
\end{proof}

In order to move forward we need the following general property of  center stable foliations. 

\begin{proposition}\label{minimality} Let $f$ be a dynamically coherent partially hyperbolic diffeomorphism such that $\Omega(f)=M$. Then $\mathcal F^{cs}$ is minimal.
\end{proposition}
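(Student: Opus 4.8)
The plan is to show that any nonempty closed $cs$-saturated invariant set is everything, using the minimality of the strong foliations together with the completeness of $\mathcal F^{cs}$ established above. First I would observe that since $\Omega(f) = M$, Proposition \ref{acc.transitivity}-type arguments (via Proposition \ref{sublamination}) give that the orbit of every $us$-leaf is dense: indeed $\Gamma(f)$ cannot contain a proper nonempty invariant $us$-saturated subset without periodic points on it, and the standard dichotomy forces density of $us$-leaves; in particular, the unstable foliation $\mathcal F^u$ is $f$-minimal, and likewise the stable foliation. (If the intended hypotheses allow periodic points one must be slightly more careful here, but the conclusion that some — hence, by invariance, every — $cs$-leaf is dense is what we are after.)

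The key step is this: let $C = \overline{\Fcs(x)}$ for some $x \in M$; this is a closed, $f$-invariant (after noting the closure of a leaf's forward/backward orbit, or rather just using that $\Fcs(x)$ is a single leaf and taking its closure), $cs$-saturated set, and it is clearly $s$-saturated since $\Fcs(x)$ is $s$-saturated and closure preserves this. Now I claim $C$ is also $u$-saturated. Take $y \in C$ and a small unstable arc $J$ through $y$. Using completeness of $\mathcal F^{cs}$ (Lemma \ref{completeness}) and the local product structure of the three foliations, points near $y$ in $\Fcs(y)$ are obtained by flowing along center then stable; transversality of $\mathcal F^{cu}$ and $\mathcal F^{cs}$ lets one show that a point $z \in J$ close to $y$ lies in the center-stable leaf through $y$ only if $z = y$, so this direct approach needs the global structure. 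Instead I would argue: since the orbit of $\Fcs(x)$ is dense, and $\mathcal F^{cs}$ is complete, for each point $p \in M$ and each $\varepsilon > 0$ some iterate $f^n(\Fcs(x))$ comes $\varepsilon$-close to $p$ in the $C^0$ sense along a full $\varepsilon$-disk; passing to the closure and using $f$-invariance of $C$ gives $p \in C$. Hence $C = M$, i.e. every $cs$-leaf is dense, which is exactly minimality of $\mathcal F^{cs}$.

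More carefully, here is the structure I would actually write: (i) every $u$-leaf has dense orbit (from $\Omega(f)=M$ and Proposition \ref{sublamination}); (ii) therefore $\mathcal F^u$ is a minimal foliation, since the closure of any single $u$-leaf is a closed $u$-saturated set which, being invariant under no proper obstruction, must be all of $M$ — alternatively invoke that a minimal set for $\mathcal F^u$ exists and its saturation argument forces it to be everything; (iii) fix a $cs$-leaf $L$; its closure $\overline L$ is closed, $cs$-saturated, hence in particular contains, for every $y \in \overline L$, the whole unstable leaf $\Fu(y)$ — because $\Fcs(y) \subset \overline L$ and $W^u$ of a center leaf, by completeness (Lemma \ref{completeness}) applied symmetrically, stays inside the union of center-stable leaves through nearby points — so $\overline L$ is $u$-saturated; (iv) a nonempty closed $u$-saturated set is all of $M$ by (ii); hence $L$ is dense. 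The cleanest route for (iii) is: $\overline L \supset \Fcs(y)$ and, since $\mathcal F^{cu}$ is complete (Lemma \ref{completeness}), $W^u(\Fc(y)) = \Fcu(y)$; so $\overline L$ contains $\Fc(y)$ (center leaf inside $\Fcs(y)$) and then, for each $z \in \Fc(y)$, points of $\Fu(z)$ are limits of points in $W^s(\Fc(\cdot))$ of nearby center-stable leaves, all of which lie in $\overline L$.

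The main obstacle I expect is step (iii): showing $\overline L$ is $u$-saturated, i.e. that moving in the unstable direction from a point of a center-stable leaf, or from its closure, stays in the closure. This is where completeness of $\mathcal F^{cs}$ and $\mathcal F^{cu}$ and the uniform transversality of the three foliations must be combined; the subtlety is that a center-stable leaf is not $u$-saturated itself, so one genuinely needs to pass to the closure and use an approximation/limiting argument (a sequence $f^{n_k}(L) \to$ a set through a given unstable arc), exploiting that the orbit of $L$ is dense and that $C^0$-limits of complete center-stable leaves carry full unstable arcs. Handling the limiting argument so that a full (not just partial) unstable leaf is captured — uniformly in the basepoint — is the delicate point, and it is exactly the place where the hypothesis $\Omega(f) = M$ (giving recurrence everywhere) is used.
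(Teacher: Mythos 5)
There is a genuine gap, in fact two, and the approach as written would not close them. First, your steps (i)--(ii) assert that $\Omega(f)=M$ forces the strong unstable (and stable) foliation to be minimal, citing Proposition \ref{sublamination}. That proposition is about $f$-invariant $us$-saturated subsets of the accessibility lamination $\Gamma(f)$; it says nothing about closures of individual strong unstable leaves, which are $u$-saturated but in general neither $us$-saturated nor contained in $\Gamma(f)$. Moreover the conclusion itself is false at this level of generality: for the time-one map of a suspension Anosov flow one has $\Omega(f)=M$ and dynamical coherence, yet every strong unstable leaf is contained in a single torus fibre of the suspension, so $\mathcal F^u$ is far from minimal (while $\Fcs$, the weak stable foliation of the flow, \emph{is} minimal -- which shows the proposition cannot be reached through minimality of $\mathcal F^u$). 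Second, your key step (iii) -- that $\overline{L}$ is $u$-saturated for a center-stable leaf $L$ -- is essentially the statement to be proved, and the sketch does not establish it: the set $C=\overline{\Fcs(x)}$ is \emph{not} $f$-invariant (the leaf of $x$ is sent to the leaf of $f(x)$), so no dynamical recurrence argument applies to it directly, and the limiting argument you outline presupposes that the orbit of $L$ is dense, which is not available and which in any case would only yield $f$-minimality of $\Fcs$, not minimality (every leaf dense). Completeness (Lemma \ref{completeness}) cannot rescue this either; note also that it was proved under the no-periodic-points hypothesis of that section, whereas Proposition \ref{minimality} is stated as a general fact.

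The paper's proof is of a different and much shorter nature, purely foliation-theoretic plus a repeller argument: by Theorem \ref{nocompact} the foliation $\Fcs$ has no compact leaves, hence by Hector--Hirsch \cite[Theorem V.4.1.3]{hh} it has only finitely many minimal sets; their union $\Lambda$ is a compact, $f$-invariant, $cs$-saturated set, and since the transverse (unstable) direction is uniformly expanded, $\Lambda$ is a repeller. If $\Lambda\neq M$, points in a fundamental domain of the repulsion basin would be wandering, contradicting $\Omega(f)=M$; hence $\Lambda=M$, and connectedness of $M$ forces a single minimal set equal to $M$, i.e.\ $\Fcs$ is minimal. If you want to salvage your outline, the missing ingredient is precisely this use of transverse expansion to upgrade ``minimal set of the foliation'' to ``all of $M$'' via the non-wandering hypothesis, rather than any minimality of the one-dimensional strong foliations.
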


\begin{proof} According to Theorem \ref{nocompact}, $\mathcal F^{cs}$ has no compact leaves. Thus, it has a finite number of minimal sets \cite[Theorem V.4.1.3]{hh}. 
The union of its minimal sets $\Lambda$ is a compact invariant set and partial hyperbolicity implies that it is a repeller. Since $\Omega(f)=M$ the only possibility is that $\Lambda =M$, and then $\mathcal F^{cs}$ is minimal.
\end{proof}

\begin{lemma}\label{horizontal.cs} $\mathcal F^{cs}$ is horizontal and there is $k\geq 1$ such that $f^{k}$ fixes all center-stable leaves. Moreover, this property lifts to the universal cover. 
\end{lemma}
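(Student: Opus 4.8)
The plan is to apply the structure theory already assembled in the excerpt. First I would argue that $\mathcal F^{cs}$ is horizontal. By Proposition \ref{minimality}, $\mathcal F^{cs}$ is minimal; in particular it has no compact (torus) leaves, so the Brittenham--Thurston theorem applies and, after an isotopy absorbed into $p$, every leaf of $\mathcal F^{cs}$ is either horizontal or vertical. Let $V$ be the union of the vertical leaves: it is closed, saturated, and nonempty only if it is all of $M$ (by minimality of $\mathcal F^{cs}$). But $V = M$ is impossible by Remark \ref{foliation.with.vertical.leaves} (a codimension-one foliation cannot have all leaves vertical, since that would push a one-dimensional foliation down to the orbifold). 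Hence $V = \varnothing$ and $\mathcal F^{cs}$ is horizontal.

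Next I would obtain the integer $k$. Since $\mathcal F^{cs}$ is a horizontal $f$-invariant minimal foliation on the non-trivial circle bundle $M$ (non-triviality by the Remark following Proposition \ref{fixedcover}, citing \cite{haposh}), and $f$ is isotopic to the identity, Theorem \ref{leafinvariance} applies with $\Lambda = \mathcal F^{cs}$ and $h = f$: there is $k \in \NN$ such that $f^k$ fixes every center-stable leaf. Finally, for the statement about the universal cover, I would invoke Proposition \ref{fixedcover} applied to $h = f^k$ (which now fixes every leaf, i.e.\ is in the ``$k=1$ case'' for the power $f^k$): this yields a good lift $\widetilde{f^k}$ of $f^k$ to $\tilde M$ fixing every leaf of $\tilde{\mathcal F}^{cs}$. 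One should note that this good lift of $f^k$ need not be the $k$-th power of a good lift of $f$, but since we only need existence of \emph{some} good lift with the fixing property, this is harmless.

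The main obstacle, such as it is, is bookkeeping rather than substance: verifying that the hypotheses of Theorem \ref{leafinvariance} and Proposition \ref{fixedcover} are met --- in particular that $M$ is a \emph{non-trivial} circle bundle (needed for Theorem \ref{leafinvariance}) and that $\mathcal F^{cs}$, being minimal by Proposition \ref{minimality}, is genuinely a minimal lamination in the sense required. Everything else is a direct citation of the results already in this section and the preceding ones.
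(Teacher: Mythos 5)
Your proposal is correct and follows essentially the same route as the paper: exclude vertical leaves using minimality of $\mathcal F^{cs}$ (Proposition \ref{minimality}) together with Remark \ref{foliation.with.vertical.leaves} to get horizontality, then apply Theorem \ref{leafinvariance} (using non-triviality of the circle bundle from \cite{haposh}) and Proposition \ref{fixedcover} to $f^k$ to fix all leaves and lift this to the universal cover. The extra bookkeeping you note (minimality implying no compact leaves, the good lift of $f^k$ not needing to be a power of a good lift of $f$) is consistent with, and slightly more explicit than, the paper's brief argument.
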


\begin{proof}  The set of vertical leaves is closed, if it is not empty, then minimality implies that all leaves are vertical. This contradicts Remark \ref{foliation.with.vertical.leaves}. 
Horizontality is also proven in \cite{haposh}. 
Now we can apply the results in Section \ref{section.horizontal} to get that all leaves are periodic. 
\end{proof}

We shall assume from now on that all leaves are fixed.

\begin{lemma}\label{cs.plane.cylinder}
Every center-stable leaf is either a plane or a cylinder. 
 \end{lemma}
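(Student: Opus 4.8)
\emph{Proof plan.} The plan is to study a center-stable leaf $L=L^{cs}$ through its lift $\tilde L$ to $\tilde M$. Since $\mathcal F^{cs}$ is Reebless, the leaves of $\tilde{\mathcal F}^{cs}$ are planes, and by Lemma \ref{triv_foliated} the leaf $\tilde L$ is \emph{trivially} bi-foliated by $\tilde{\mathcal F}^s$ and $\tilde{\mathcal F}^c$. I would fix an identification $\tilde L\cong\RR_c\times\RR_s$ in which the $\tilde{\mathcal F}^c$-leaves are the lines $\RR_c\times\{s\}$ and the $\tilde{\mathcal F}^s$-leaves are the lines $\{c\}\times\RR_s$. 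Let $\Gamma\le\pi_1(M)$ be the subgroup preserving $\tilde L$; it acts on $\tilde L$ freely and properly discontinuously, and since $\tilde L$ is simply connected it is the universal cover of $L$, so $\Gamma\cong\pi_1(L)$. Because deck transformations are lifts of $\mathrm{id}_M$, each $g\in\Gamma$ preserves both $\tilde{\mathcal F}^c$ and $\tilde{\mathcal F}^s$, and therefore acts by a product homeomorphism $g=g_c\times g_s$ of $\RR_c\times\RR_s$.

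Next I would split into two cases according to whether $L$ contains a compact center leaf. If it does, Lemma \ref{cylinder} together with the orientability of $E^c\oplus E^s$ already gives that $L$ is a cylinder. Suppose instead that every center leaf of $L$ is a line. The assignment $g\mapsto g_s$ defines an action of $\Gamma$ on the leaf space $\RR_s$. For $s_0\in\RR_s$, the center leaf of $L$ obtained by projecting $\RR_c\times\{s_0\}$ is the quotient of $\RR_c\times\{s_0\}$ by $\mathrm{Stab}_\Gamma(s_0)=\{g\in\Gamma:g_s(s_0)=s_0\}$; freeness of the $\Gamma$-action on $\tilde L$ forces every nontrivial element of $\mathrm{Stab}_\Gamma(s_0)$ to act without fixed points on $\RR_c\times\{s_0\}$, and since this restricted action is again properly discontinuous, $\mathrm{Stab}_\Gamma(s_0)$ is trivial or infinite cyclic, the latter making the center leaf a circle. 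Hence the assumption forces $\mathrm{Stab}_\Gamma(s_0)=\{1\}$ for every $s_0$, i.e. $\Gamma$ acts freely on $\RR_s$. A fixed-point-free homeomorphism of $\RR$ is orientation preserving, so by Hölder's theorem $\Gamma$ is abelian. On the other hand, since $\mathcal F^{cs}$ is minimal (Proposition \ref{minimality}) it has no compact leaf, so $L$ is an open surface and $\Gamma\cong\pi_1(L)$ is a free group. A group that is simultaneously free and abelian is trivial or infinite cyclic, so $L$ is either a plane or a cylinder, the Möbius band being excluded because $E^{cs}$ is orientable. This proves the lemma.

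The step I expect to be most delicate is the middle one: correctly identifying the center leaves of $L$ with the quotients $(\RR_c\times\{s_0\})/\mathrm{Stab}_\Gamma(s_0)$ and deducing that ``no compact center leaf in $L$'' is equivalent to freeness of the $\Gamma$-action on the line. Once that bookkeeping is in place, Hölder's theorem together with the fact that the fundamental group of an open surface is free finishes the argument at once.
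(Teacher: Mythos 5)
Your proof is correct and follows essentially the same route as the paper: after disposing of leaves containing a closed center curve via Lemma \ref{cylinder}, you use the trivially bi-foliated planar lift (Lemma \ref{triv_foliated}), let the stabilizer $\Gamma\cong\pi_1(L)$ act on the center-leaf space of the lift, deduce freeness of that action from the absence of compact center leaves, and conclude via H\"older's theorem together with the classification of non-compact orientable surfaces with abelian (hence trivial or infinite cyclic) fundamental group. The only cosmetic difference is that the paper lets the deck group act on both the center- and stable-leaf spaces and embeds it in the product of two abelian groups, whereas you note that the free action on the center-leaf space alone already makes $\Gamma$ abelian; both versions work.
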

 
 \begin{proof}  By Lemma \ref{cylinder} we need only consider a leaf $W$ which does not contain a center circle. 
  As in \cite{jz}, we use an argument from \cite{bowi}. 
 Let $\tilde W$ be a lift of $W$ to the universal cover and $G$ be the group of deck transformations leaving invariant $\tilde W$. $G$ acts in both, the spaces of center and stable leaves of $W$. Both actions are without fixed points because there are neither compact center leaves nor compact stables leaves. Since both spaces are homeomorphic to lines, we have that both actions are orientation preserving and abelian \cite[Theorem 6.10]{ghys}. Since $G$ is a subgroup of the product of these  two actions we obtain that $G$ is abelian. Thus $W$ is a non-compact orientable surface with abelian fundamental group. This implies that $W$ is either a plane or a cylinder.
 \end{proof}
 
 \begin{lemma}\label{compact.center} Suppose  that a center-stable leaf $W$ is a cylinder. Then it contains an $f$-invariant closed center curve. \end{lemma}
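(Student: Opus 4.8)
The plan is to exploit the structure we already have: $W$ is a cylinder center-stable leaf, fixed by $f$, which is trivially bi-foliated by $\mathcal F^c$ and $\mathcal F^s$ (Lemma \ref{triv_foliated} lifted, plus Lemma \ref{cs.plane.cylinder}), and which contains no compact center leaf (otherwise we would be in the case of Lemma \ref{cylinder} and there is nothing to prove, or rather the statement is about the genuinely non-compact situation). Lift $W$ to $\tilde W \subset \tilde M$, a plane, with deck group $G \cong \ZZ$ (abelian, orientation preserving, as established in the proof of Lemma \ref{cs.plane.cylinder}) acting on $\tilde W$ so that $\tilde W / G = W$. The two invariant foliations $\tilde{\mathcal F}^c|_{\tilde W}$ and $\tilde{\mathcal F}^s|_{\tilde W}$ give product coordinates $\tilde W \cong \RR_c \times \RR_s$, where the first factor is the leaf space of $\tilde{\mathcal F}^s$ in $\tilde W$ (parametrized by a fixed center leaf) and the second is the leaf space of $\tilde{\mathcal F}^c$.

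First I would analyze the $G$-action in these coordinates. Since the generator $g$ of $G$ preserves both foliations and acts freely, and $W$ is a cylinder (not a Möbius band or plane), $g$ acts on the center leaf space $\RR_s$ as a fixed-point-free orientation-preserving homeomorphism, hence is conjugate to a translation there; and $g$ preserves each of the two "ends" structure appropriately. The key point is that $g$ must act as a translation on exactly one of the two factors and can be taken trivial (up to the product structure) on the complementary direction — more precisely, since $\tilde W/\langle g\rangle$ is an open cylinder, the quotient of the leaf space of $\tilde{\mathcal F}^c$ by $g$ is a circle, so the center leaves of $W$ are circles — wait, that contradicts our standing assumption. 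So instead: the quotient of the leaf space of $\tilde{\mathcal F}^s$ by $g$ is a circle, meaning the stable leaves in $W$ close up; equivalently there is exactly one stable leaf up to the $\ZZ$-action, so $W$ is a union of a single stable leaf's worth of center leaves, i.e. $W = W^c(\ell)$ for a stable leaf... this needs care. The correct dichotomy from the cylinder structure: either center leaves are circles (excluded) or stable leaves are "circles" in $W$ in the sense that the center-leaf-space direction is the compact one. I would pin down that the compactified direction is the $\mathcal F^c$-leaf-space direction, so that $W$ retracts onto a closed curve transverse to $\mathcal F^c$, i.e. a closed center... no — onto a closed curve which is a cross-section to $\mathcal F^c$.

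Then I would produce the invariant center curve dynamically rather than topologically. Because $f$ fixes $W$ and preserves both foliations on it, $f|_W$ induces a homeomorphism $\bar f$ on the leaf space of $\tilde{\mathcal F}^c|_{\tilde W} \cong \RR$ commuting with $g$ (translation by $1$ after normalizing), hence $\bar f$ descends to a circle homeomorphism. Here I would invoke the contraction: along $\mathcal F^s$, $f$ contracts, so on $W$ the stable foliation is uniformly contracted; combined with completeness this forces $\bar f$, or rather the action of $f$ transverse to the center foliation, to have a fixed point — concretely, the center leaf realizing a minimum of some Lyapunov-type quantity, or simply: the quotient circle homeomorphism $\bar f$ has a periodic point, and after replacing $f$ by an iterate, a fixed point; this fixed point is an $f$-invariant center leaf in $W$, which is a closed curve (a circle) since it projects to a point in a compact direction. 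The main obstacle I expect is precisely this last step — ruling out that $\bar f$ is conjugate to an irrational rotation with no periodic orbit. I would handle it using the absence of periodic points of $f$ only indirectly: an invariant center leaf need not be a periodic point of $f$ (the dynamics along it can be irrational, exactly as in the Anosov-flow example in the introduction), so I actually want $\bar f$ to have a fixed point, which follows because $f$ restricted to the transverse direction of $\mathcal F^c$ in $W$ is, after passing to a suitable iterate and using that $W$ is a fixed cylinder with $f$ preserving orientations, a homeomorphism of the quotient circle that is isotopic to the identity and — crucially — has a fixed point because of the topological Anosov flow structure on the center foliation (Theorem \ref{bowithm}(4)): the flow conjugate orbits inside $W$ form the center foliation, and a cylinder leaf of a topological Anosov flow's weak-stable foliation contains a closed orbit. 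That closed orbit is the desired $f$-invariant closed center curve.
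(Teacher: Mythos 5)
Your reduction and your key step both have problems. First, the opening dismissal of the case where $W$ contains a compact center leaf (``nothing to prove'') is not justified: the lemma asks for an $f$-\emph{invariant} closed center curve, and neither Lemma \ref{cylinder} nor the mere existence of a compact center leaf gives invariance, so that case still requires an argument. More seriously, in the case you do treat (no compact center leaf in $W$), the generator $g$ of the deck group acts freely on the center-leaf space of $\tilde W$, which is exactly why that leaf space descends to a circle --- but it also means that \emph{every} center leaf of $W$ is a line. Hence a fixed point of the induced circle homeomorphism $\bar f$ produces an $f$-invariant \emph{non-compact} center leaf; your claim that it ``is a closed curve since it projects to a point in a compact direction'' confuses compactness of the leaf space with compactness of the leaf. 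So even granting the fixed point, you do not reach the conclusion of the lemma, and it is precisely the closed (compact) center curve that is needed later (Lemma \ref{center.invariance} and the hypotheses of Theorem \ref{bowithm}).

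In addition, the device you propose for producing the fixed point is circular: Theorem \ref{bowithm}(4), the expansive/topological Anosov flow along the center direction, is available only after one has a closed periodic center leaf with periodic nearby center leaves --- which is exactly what Lemma \ref{compact.center} together with Lemma \ref{center.invariance} is in the process of establishing; in the paper Bonatti--Wilkinson is applied only afterwards. The paper instead produces compactness in the center direction dynamically: by completeness (Lemma \ref{triv_foliated}) a non-compact center leaf of $W$ meets its stable leaf at least twice, so one can form a simple closed curve $\alpha\subset W$ from a center arc and a stable arc; $\alpha$ crosses every stable leaf of $W$, the stable segments joining $\alpha$ to $f(\alpha)$ have uniformly bounded length, and uniform stable contraction makes the saturations $f^n(W^s_C(\alpha))$ nested, so that $K=\bigcap_{n>0}f^n(W^s_C(\alpha))$ meets each stable leaf in exactly one point and is tangent to the center --- an $f$-invariant closed center curve. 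Some contraction/graph-transform argument of this type, producing a compact invariant transversal to the stable foliation of $W$, is the missing ingredient; a fixed point on the leaf-space circle alone cannot supply it.
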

 
 \begin{proof} Due to the completeness of $\tilde W$ and the fact that $G$ in the previous proof  is non-trivial, if $W^{c}(x)$ is a non-compact center leaf with $x\in W$, then $W^{c}(x)$ intersects $W^s(x)$ in at least two points.
 
 Now, take $\alpha$ a simple closed curve formed by the union of an arc joining to consecutive intersections of $W^c(x)$ with $W^s(x)$ and an arc of $W^s(x)$. The curve $\alpha$ intersects all stable manifolds in  $W$ and the same happens with $f^p(\al)$. Compactness of $\al$ implies that the length of the stable arcs joining $\al$ with $f(\al)$ is uniformly bounded. This implies that there is $C>0$ large enough such that $\overline{f^p(W^s_C(\al))}\subset W^s_C(\al)$. The uniform contraction along stable manifolds implies that $K=\bigcap_{n>0}f^{n}(W^s_C(\al))$ intersects each stable manifold in exactly one point. Moreover, iterating $\alpha$, it is clear that $K$ contains a center curve through each point and then, it is a fixed closed center curve. This proves the lemma. 
\end{proof}

At this point we can apply Theorem \ref{noperiodicpoints} to get the ergodicity of $f$.
However, we continue in order to prove accessibility. 
 
\begin{lemma}\label{intersection.center}
Given $\tilde x,\tilde y\in\tilde M$, $\tilde\Fcs(\tilde x)$ and $\tilde \Fcu(\tilde y)$ intersect in at most one center curve. 
\end{lemma}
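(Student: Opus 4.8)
\textbf{Proof plan for Lemma \ref{intersection.center}.}

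The plan is to argue by contradiction, working in the universal cover $\tilde M$, where by Lemma \ref{triv_foliated} (and its center-unstable analogue) every leaf of $\tilde\Fcs$ is trivially bi-foliated by $\tilde\Fs$ and $\tilde\Fc$, and every leaf of $\tilde\Fcu$ is trivially bi-foliated by $\tilde\Fu$ and $\tilde\Fc$. Suppose $\tilde\Fcs(\tilde x)\cap\tilde\Fcu(\tilde y)$ contains two distinct center leaves $\tilde L_1^c$ and $\tilde L_2^c$. First I would observe that since both $\tilde\Fcs(\tilde x)$ and $\tilde\Fcu(\tilde y)$ are planes (Reebless, by the discussion following Lemma \ref{completeness}) trivially bi-foliated by their sub-foliations, the intersection $\Sigma = \tilde\Fcs(\tilde x)\cap\tilde\Fcu(\tilde y)$ is a union of center leaves, and inside $\tilde\Fcs(\tilde x)$ the two center leaves $\tilde L_1^c, \tilde L_2^c$ bound a strip $B^{cs}$ (the region between them, using the product structure of $\tilde\Fcs(\tilde x)$), while inside $\tilde\Fcu(\tilde y)$ they bound a strip $B^{cu}$.

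The key step is to derive a contradiction from the coexistence of these two strips sharing the same boundary center leaves. I would consider a stable arc $\sigma$ in $\tilde\Fcs(\tilde x)$ transverse to the center foliation there, running from $\tilde L_1^c$ to $\tilde L_2^c$ inside $B^{cs}$; its interior consists of points lying strictly between the two center leaves in the center-stable plane. Similarly take an unstable arc $\upsilon$ in $B^{cu}\subset\tilde\Fcu(\tilde y)$ from $\tilde L_1^c$ to $\tilde L_2^c$. Concatenating $\sigma$ with the reverse of $\upsilon$ (after sliding along the boundary center leaves) produces a closed $us$-path. The plan is then to use that $f$ has no periodic points together with completeness to rule this out: pushing the strip $B^{cs}$ forward under $f^n$, the center-stable product structure is preserved, so $f^n(B^{cs})$ is again a strip bounded by $f^n(\tilde L_1^c)$ and $f^n(\tilde L_2^c)$; under forward iteration the stable width of this strip contracts uniformly, which forces the two boundary center leaves to become arbitrarily close in $\tilde\Fcs$, while under backward iteration the unstable width of $B^{cu}$ contracts, forcing them arbitrarily close in $\tilde\Fcu$. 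Combining with the fact that $\Ga(f)$ is a minimal $us$-foliation and with the Anosov closing lemma applied to a recurrent point — exactly as in the proof of Lemma \ref{completeness} — a point on one boundary center leaf maps into the same $us$-leaf as a point on the other under some iterate, producing a periodic point, a contradiction.

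An alternative and perhaps cleaner route, which I would pursue in parallel, is purely topological: project the picture to $M$, noting that by Lemma \ref{horizontal.cs} we may assume $f$ fixes all center-stable (and center-unstable) leaves, and use the leafwise product structures to show that a strip bounded by two center leaves in both a center-stable and a center-unstable leaf would force one of the center leaves to be compact (as in the last step of Lemma \ref{cylinder}, where completeness makes the band's image the whole leaf), or would violate the uniqueness of intersection of two transverse foliations on a disk (as used at the end of the sketch of Theorem \ref{fenleythm}). The main obstacle I anticipate is the bookkeeping needed to promote ``the intersection contains two center leaves'' into a genuine embedded strip with controlled boundary in each of the two planes: one must be careful that the two center leaves are actually comparable (one does not separate the other from $\tilde x$ in a pathological way) and that the strips on the two sides glue to a topological object whose iteration behaviour is under control. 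Once the strip is set up, I expect the contraction-plus-closing-lemma argument, already rehearsed twice in the paper, to close the proof without difficulty.
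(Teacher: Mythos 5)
Your setup is the right one, but you stop just short of the point that finishes the proof and instead launch a heavy dynamical argument that does not close. After invoking Lemma \ref{triv_foliated}, the product structure of $\tilde\Fcs(\tilde x)$ gives a stable arc joining the two center curves $\tilde L^c_1$ and $\tilde L^c_2$; since both curves lie in $\tilde\Fcu(\tilde y)$, this stable arc meets the single leaf $\tilde\Fcu(\tilde y)$ in two points. That is already absurd: $\Fcu$ is Reebless \cite{bbi}, so its lift to $\tM$ is a foliation by properly embedded planes and any curve transverse to it --- in particular any stable arc --- meets each leaf at most once (a double intersection yields a null-homotopic closed transversal, contradicting Novikov). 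This two-line observation is the paper's entire proof; no iteration, no closing lemma, and no use of the absence of periodic points is needed.

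The dynamical route you propose has a genuine gap at its key step. A closed $us$-path is not by itself a contradiction (accessibility classes may well contain loops), and the assertion that ``a point on one boundary center leaf maps into the same $us$-leaf as a point on the other under some iterate, producing a periodic point'' is a non sequitur: landing in the $us$-leaf of a \emph{different} point yields nothing, and the closing argument of Lemma \ref{completeness} produces a point returning to its \emph{own} $us$-leaf only after the careful ordering/intermediate-value argument on the plaque space, which you do not reproduce here. Moreover, to control the strips $B^{cs}$ and $B^{cu}$ under iteration you would need $\tilde f$-invariance of $\tilde L^c_1$, $\tilde L^c_2$ and of $\tilde\Fcu(\tilde y)$; at this stage only the center-stable leaves are known to be fixed (Lemma \ref{horizontal.cs}), while invariance of center leaves is Lemma \ref{center.invariance}, which is proved \emph{after} and \emph{using} the present lemma, so relying on it would be circular. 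Finally, in your alternative route the appeal to ``uniqueness of intersection of two transverse foliations on a disk'' is essentially the statement to be proved and cannot be cited as a lemma; the correct substitute is precisely the Reebless/transversal argument above.
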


\begin{proof}
If the intersection contains two center curves, completeness implies the existence of a stable curve intersecting $\tilde \Fcu(\tilde y)$ in two points. This is a contradiction. 
\end{proof}

\begin{lemma}\label{center.invariance}
There is a lift $\tilde f$ of $f$ such that all center leaves are $\tilde f$-invariant.
\end{lemma}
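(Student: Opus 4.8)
The plan is to produce a single lift of $f$ that fixes every center-stable and every center-unstable leaf of $\tilde M$; by dynamical coherence and Lemma~\ref{intersection.center} such a lift automatically fixes every center leaf, because $\tilde{\mathcal F}^{c}(\tilde x)=\tilde{\mathcal F}^{cs}(\tilde x)\cap\tilde{\mathcal F}^{cu}(\tilde x)$ for every $\tilde x\in\tilde M$ (the center leaf lies in both, and by Lemma~\ref{intersection.center} the intersection is a single center curve, which is then sent to itself).

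First I would record that, after passing to a common iterate, Lemma~\ref{horizontal.cs} and its center-unstable counterpart give that $\mathcal F^{cs}$ and $\mathcal F^{cu}$ are horizontal and that $f$ fixes each of their leaves, so Proposition~\ref{fixedcover} supplies a good lift $\tilde f_{s}$ of $f$ fixing every leaf of $\tilde{\mathcal F}^{cs}$ and a good lift $\tilde f_{u}$ fixing every leaf of $\tilde{\mathcal F}^{cu}$. Since both are good lifts, $\tilde f_{u}\circ\tilde f_{s}^{-1}$ is a deck transformation commuting with every deck transformation, hence lies in the center of $\pi_{1}(M)$. For a circle bundle over a hyperbolic orbifold this center is the infinite cyclic group generated by the fiber translation $\tau$, so $\tilde f_{u}=\tau^{n}\circ\tilde f_{s}$ for some $n\in\ZZ$.

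The crux is to show $n=0$, and I would argue by contradiction. If $n\neq 0$ then, since $\mathcal F^{cu}$ is horizontal, no nonzero power of $\tau$ fixes a leaf of $\tilde{\mathcal F}^{cu}$ (a fixed leaf would have $\tau$ in its stabilizer, forcing the corresponding center-unstable leaf downstairs to be a finite cover of $\Sigma$, impossible since that leaf is noncompact), so $\tilde f_{s}=\tau^{-n}\tilde f_{u}$ fixes no leaf of $\tilde{\mathcal F}^{cu}$; consequently $\tilde f_{s}$ fixes no center leaf inside any leaf $\tilde W^{cs}$ of $\tilde{\mathcal F}^{cs}$, because a fixed center leaf $\ell\subset\tilde W^{cs}$ would make $\tilde{\mathcal F}^{cu}(\ell)$ an $\tilde f_{s}$-invariant center-unstable leaf. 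To contradict this I would invoke a compact $f$-invariant center leaf: some center-stable leaf must be a cylinder (a minimal horizontal foliation of the nontrivial circle bundle $M$ cannot have every leaf simply connected — otherwise the associated action of the surface group $\pi_{1}(\Sigma)$ on the fiber would be free — so by Lemma~\ref{cs.plane.cylinder} a cylinder leaf occurs), and Lemma~\ref{compact.center} gives an $f$-invariant closed center curve $\gamma$ inside such a leaf $W^{cs}$. As $\gamma$ is essential in the cylinder $W^{cs}$, its class generates the stabilizer $G\cong\ZZ$ of a chosen lift $\tilde W^{cs}$, so $\gamma$ has a unique lift $\tilde\gamma\subset\tilde W^{cs}$; since $\tilde f_{s}$ preserves $\tilde W^{cs}$ and carries a lift of $\gamma=f(\gamma)$ inside $\tilde W^{cs}$ to a lift of $\gamma$ inside $\tilde W^{cs}$, we get $\tilde f_{s}(\tilde\gamma)=\tilde\gamma$, a contradiction. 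Hence $n=0$, $\tilde f:=\tilde f_{s}=\tilde f_{u}$ fixes all center-stable and center-unstable leaves, and by the first paragraph it fixes every center leaf.

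I expect the step $n=0$ to be the main obstacle: the two good lifts coming from Proposition~\ref{fixedcover} need not coincide a priori, and reconciling them requires genuine geometric input — here a compact $f$-invariant center leaf sitting inside a cylindrical center-stable leaf together with the uniqueness of its lift. An alternative, cylinder-free route I might use instead is to run everything on the intermediate cover $\hat M\cong\DD\times S^{1}$: the isotopy from the identity provides one good lift $\hat f$ of $f$ to $\hat M$; by the reasoning of Proposition~\ref{action} it fixes all center-stable and all center-unstable leaves of $\hat M$, hence by Lemma~\ref{intersection.center} all center leaves of $\hat M$; lifting $\hat f$ to a good lift $\tilde f$ on $\tilde M$ and using that $\tilde f$ commutes with the central $\tau$, one checks that the integer $m=m(\tilde\ell)$ defined by $\tilde f(\tilde\ell)=\tau^{m}\tilde\ell$ is deck-invariant and locally constant, hence constant, and correcting $\tilde f$ by that power of $\tau$ yields the desired lift.
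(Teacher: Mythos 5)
Your proposal is correct, but it globalizes the invariance by a genuinely different mechanism than the paper. Both arguments share the same core: a good lift $\tilde f_s$ fixing every leaf of $\tilde{\mathcal F}^{cs}$ (Lemma \ref{horizontal.cs}, Proposition \ref{fixedcover}), a compact $f$-invariant center curve $\gamma$ inside a cylindrical center-stable leaf (Lemma \ref{cs.plane.cylinder}, Lemma \ref{compact.center}), and the fact that the preimage of $\gamma$ meets a lift of that cylinder in a single component, which is therefore fixed by $\tilde f_s$. From this point the paper propagates: the center-unstable leaf through $\tilde\gamma$ is fixed, Lemma \ref{intersection.center} fixes every center leaf inside it, and minimality of $\mathcal F^{cu}$ together with continuity upgrades a dense family of fixed center leaves to all of them. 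You instead bring in the second good lift $\tilde f_u$ fixing every leaf of $\tilde{\mathcal F}^{cu}$, observe that two good lifts differ by a central deck transformation so $\tilde f_u=\tau^{n}\tilde f_s$ with $\tau$ the fiber class, and use the fixed center leaf $\tilde\gamma$ to force $n=0$; then every center leaf is fixed simultaneously as the (unique, by Lemma \ref{intersection.center}) intersection of a fixed $cs$-leaf with a fixed $cu$-leaf. This avoids the density/continuity step, at the cost of two extra inputs: that no nonzero power of $\tau$ stabilizes a leaf of $\tilde{\mathcal F}^{cu}$, and that some $cs$-leaf is a cylinder. For the latter your Hölder-type argument (a free action of a surface group on $S^1$ would be abelian) is a valid substitute for the paper's appeal to $M\neq\T^3$. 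For the former, your parenthetical justification (``the leaf downstairs would be a finite cover of $\Sigma$'') is not the right reason; the correct one is that leaves are $\pi_1$-injective (the foliation is Reebless) and, being horizontal, project to $\Sigma$ by a covering map, so the stabilizer of a lifted leaf injects into $\pi_1(\Sigma)$ and meets $\langle\tau\rangle$ trivially --- equivalently, in $\hat M\cong\DD\times S^1$ the lifted horizontal leaves are graphs over $\DD$, hence simply connected, and $\tau$ permutes the components of their preimages freely. With that repair your argument is complete and parallel in strength to the paper's.
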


\begin{proof}

By Lemma \ref{horizontal.cs}, there exists $\tilde f$ leaving invariant all leaves of $\tilde{\mathcal F}^{cs}$. Since $M$ is not the 3-torus, at least one leaf is not a plane and Lemma \ref{compact.center} implies the existence of a compact center leaf $\gamma$. Let $\tilde\gamma$ be any lift of $\gamma$. Since $W^s(\tilde \gamma)$ is $\tilde f$-invariant and the the lift of $\gamma$ has a unique component in $W^s(\tilde \gamma)$, then $\tilde\gamma$ is $\tilde f$-invariant. Then we get the $\tilde f$-invariance of $W^u(\tilde \gamma)$. As a consequence of both Lemma \ref{intersection.center} and the invariance of the center stable leaves, all center leaves in $W^u(\tilde \gamma)$ are $\tilde f$ invariant. Minimality of the center unstable foliations implies that the center unstable leaves containing a lift of $\gamma$ are dense, so we have the $\tilde f$-invariance for a dense set center manifolds. Continuity of $\tilde f$ implies the lemma. 
\end{proof}

We have now established all of the hypothesis of Theorem \ref{bowithm}.
Let $\Phi$ be the resulting topologically expansive flow along the center
direction.
%
%
%
%
%
Since the stable and unstable foliations of $\Phi$ coincide with the center-stable and center-unstable foliations of $f$ we have that $\Phi$ is a topological Anosov flow. We can also derive this  from Brunella \cite{bru} who proved this is always the case for any expansive flow on a Seifert manifold. 

%

 \begin{lemma}\label{regulating}
 $\Phi$ is regulating for $\mathcal F^{us}$.
\end{lemma}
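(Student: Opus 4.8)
The goal is to show that the center flow $\Phi$ (equivalently, the center foliation $\Fc$) is regulating for the $us$-foliation $\Fus$. Recall that regulating means two things: first, $\Phi$ is transverse to $\Fus$, and second, in the universal cover $\tilde M$, every lifted center orbit intersects every lifted $us$-leaf. Transversality is immediate from partial hyperbolicity, since $E^c$ is transverse to $E^s\oplus E^u$. The real content is the intersection property in $\tilde M$. The plan is to work entirely in $\tilde M$, using the structure established in the previous lemmas: by Lemma \ref{triv_foliated}, $\tilde \Fcs$ is complete (trivially bi-foliated by $\tilde\Fs$ and $\tilde\Fc$), and by the analogous statement $\tilde\Fcu$ is complete as well; by Theorem \ref{nocompact} neither $\tilde\Fcs$ nor $\tilde\Fcu$ has compact leaves, so all their leaves are planes trivially bi-foliated; and by Lemma \ref{intersection.center}, a leaf of $\tilde\Fcs$ and a leaf of $\tilde\Fcu$ meet in at most one center curve. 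Also, by Lemma \ref{center.invariance} there is a good lift $\tilde f$ fixing all center leaves, and the $us$-foliation $\tilde\Fus$ in $\tilde M$ has leaves that are planes as well (being leaves of a horizontal lamination/foliation lifted to the universal cover).

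The first step is to fix a lifted $us$-leaf $\tilde L^{us}$ and a lifted center orbit (center leaf) $\tilde\gamma$, and consider the set of center leaves that do meet $\tilde L^{us}$. Since $\tilde L^{us}$ is saturated by both $\tilde\Fs$ and $\tilde\Fu$, and through each of its points passes exactly one center leaf, we get a well-defined "projection along center leaves" of $\tilde L^{us}$ into the leaf space. I would show that the set of center leaves meeting $\tilde L^{us}$ is open (by transversality and local product structure of the three transverse foliations) and closed (using completeness of $\tilde\Fcs$ and $\tilde\Fcu$: if a center leaf $\tilde\gamma_0$ is a limit of center leaves meeting $\tilde L^{us}$, then the corresponding stable/unstable segments connecting them to $\tilde L^{us}$ stay in the complete leaves $\tilde\Fcs(\tilde\gamma_0)$ and $\tilde\Fcu(\tilde\gamma_0)$, and one passes to the limit using that these leaves are trivially bi-foliated planes with no holonomy obstruction). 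Connectedness of the leaf space of $\tilde\Fc$ — which holds because $\Fc$ is an $\RR$-covered foliation, itself a consequence of being the center foliation of a topological Anosov flow on an aspherical $3$-manifold, and which I would invoke via the structure already extracted from Theorem \ref{bowithm} — then forces every center leaf to meet $\tilde L^{us}$.

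The complementary direction, that a fixed center leaf $\tilde\gamma$ meets every lifted $us$-leaf, follows by a symmetric argument: each $us$-leaf is the intersection-union of a stable leaf and an unstable leaf, $W^s(L^c)$ fills out the center-stable leaf $\tilde\Fcs(\tilde\gamma)$ by completeness, and $W^u(L^c)$ fills out $\tilde\Fcu(\tilde\gamma)$; so the $us$-leaves meeting $\tilde\gamma$ are parametrized by a product of two lines and form both an open and closed subset of the $us$-leaf space, which is connected since $\Fus$ is $\RR$-covered (being transverse to the $\RR$-covered center foliation of an Anosov flow, or directly because $\Gamma(f)=\Fus$ is $f$-minimal and horizontal).

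The main obstacle I anticipate is the closedness step: controlling a limit of connecting $us$-paths between center leaves requires knowing that the lengths (or at least the combinatorial types) of these paths do not degenerate, and this is exactly where completeness of $\tilde\Fcs$ and $\tilde\Fcu$ (Lemmas \ref{triv_foliated} and its unstable analogue) together with the "at most one center curve" property of Lemma \ref{intersection.center} must be used carefully — the latter rules out the connecting data spiraling or wrapping. A secondary subtlety is making precise the $\RR$-covered property of $\Fus$ and $\Fc$ in $\tilde M$ so that the open-and-closed subsets are genuinely all of the (connected) leaf spaces; I would either cite the standard fact that the center foliation of a topological Anosov flow on a Seifert manifold is $\RR$-covered and uniform, or derive uniformity from the bounded center-segment displacement in item (2) of Theorem \ref{bowithm} combined with horizontality of $\Fcs$ and $\Fcu$ from Lemma \ref{horizontal.cs}.
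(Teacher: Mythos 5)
Your proposal correctly reduces the lemma to the statement that, in $\tilde M$, every lifted center leaf meets every lifted $us$-leaf, and the openness half of your open--closed scheme is indeed immediate from transversality. But the closedness step, which you yourself flag as the main obstacle, is a genuine gap and not a technicality: the set of center leaves meeting a fixed lifted $us$-leaf being closed in the (connected, possibly non-Hausdorff) leaf space is essentially equivalent to the regulating property you are trying to prove. If $\tilde\gamma_n\to\tilde\gamma_0$ with $\tilde\gamma_n\cap\tilde L^{us}\neq\varnothing$, the intersection points can escape to infinity inside $\tilde L^{us}$ and the limit leaf $\tilde\gamma_0$ can be asymptotic to $\tilde L^{us}$ without ever crossing it; nothing in completeness (Lemma \ref{triv_foliated}), the absence of compact leaves, or the ``at most one center curve'' property (Lemma \ref{intersection.center}) rules this out, since these are statements about the $cs$/$cu$ leaves and not about how the $us$-foliation sits relative to a single center leaf. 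Regulating is a nontrivial global property of a transverse foliation, and it cannot follow from transversality plus leaf-space connectedness alone; some dynamical input is mandatory, and your plan never actually uses the map $f$ beyond invoking earlier structural lemmas.

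The paper's proof supplies exactly that missing dynamical input, and does so by a connectedness argument on $\tM$ itself rather than on leaf spaces. Using the good lift $\tf$ that fixes every center leaf (Lemma \ref{center.invariance}), has no fixed points, and preserves the lifted $us$-foliation, one sets $J_x$ equal to the center segment from $x$ to $\tf(x)$, $U_x$ equal to the $us$-saturation of $J_x$, and $V_x$ equal to the $us$-saturation of the center leaf through $x$; since $\tf$ is fixed-point free, $\tFc(x)=\bigcup_{n\in\ZZ}\tf^n(J_x)$ and hence $V_x=\bigcup_n\tf^n(U_x)$. The key observation is that if $y$ is close to $x$ in the same $us$-leaf, then $J_x$ and $J_y$ have their endpoints on the same two $us$-leaves (because $\tf$ takes the $us$-leaf of $x$ to the $us$-leaf of $\tf(x)$), so $U_x=U_y$ and therefore $V_x=V_y$; trivially $V$ is also constant along center leaves, and since any two points of $\tM$ are joined by a concatenation of stable, unstable and center arcs, $V_x$ is globally constant, hence equal to $\tM$. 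This is precisely the step your closedness argument would need to replace, and as written your proposal does not contain an argument for it.
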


\begin{proof}
    For this proof, we work entirely on the universal cover.
    For a point $x \in \tilde M$, define
    \begin{enumerate}
        \item
            $J_x$ as compact center segment
            between $x$ and $\tf(x)$,
        \item
            $U_x$ as the union of all $us$-leaves
            which intersect $J_x$, and
        \item
            $V_x$ as the union of all $us$-leaves
            which intersect the center leaf
            containing $x$.
    \end{enumerate}
    As $\tf$ has no fixed points,
    $
        \tFc(x) = \cup_{n \in \ZZ} \tf^n(J_x)
    $
    and so
    $
        V_x = \cup_{n \in \ZZ} \tf^n(U_x).
    $
    If $x$ and $y$ are sufficiently close points
    on the same $us$-leaf,
    then $J_x$ and $J_y$ are nearby center segments
    with endpoints lying on the same two $us$-leaves.
    It follows that $U_x = U_y$
    and therefore $V_x = V_y$.
    Clearly, if points $y$ and $z$ are on the same
    center leaf,
    then $V_y = V_z$.
    Since any two points on the universal cover
    may be connected by a concatenation
    of stable, unstable, and center curves,
    $V_x$ is independent
    of the point $x$
    and so $V_x = \tM$ for all $x$.
    This proves that the flow in the center direction
    is regulating.
\end{proof}

Since the $us$-foliation is a horizontal foliation
on a circle bundle, one can verify that it is uniform and $\RR$-covered.
Theorem \ref{fenleythm} then gives a contradiction.

%
%
%
%

 \section{Proof of the main theorem}
 
The previous section proved Theorem \ref{maintheorem}
in the case of no periodic points.
We now handle the case of periodic points.
To do this, we will use of the following result of Mendes.
 
 \begin{theorem}\cite{mendes}\label{mendes}
 If $S$ is homeomorphic to the plane, then an Anosov map on $S$
 can have at most one fixed point.
 \end{theorem}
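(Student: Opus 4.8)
## Proof Proposal for Theorem \ref{mendes}

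The plan is to argue by contradiction: assume the Anosov diffeomorphism $f\colon S\to S$ has two distinct fixed points $p\ne q$ and exploit the planarity of $S$ together with the geometry of the invariant foliations. Passing to $f^{2}$ if necessary, I would first assume $f$ preserves the orientations of $S$, $\mathcal W^{s}$ and $\mathcal W^{u}$. The first step is to record the structural facts about the stable and unstable foliations $\mathcal W^{s},\mathcal W^{u}$ of $S\cong\RR^{2}$, all obtained from Poincar\'e--Bendixson type arguments exactly as in \cite{franks} and in the proof of Lemma \ref{lemma.cylinder}: (i) every leaf is a properly embedded line, since a leaf accumulating on itself, or a closed leaf, would force a singularity of one of the (nonsingular) foliations via a Poincar\'e--Hopf count; (ii) a stable leaf and an unstable leaf meet in at most one point, since a second intersection would bound a bigon and a nonsingular foliation of a disk with a single bigon has nonzero index; (iii) if $p$ is fixed then $W^{s}(p)$ and $W^{u}(p)$ are $f$-invariant and $f$ contracts $W^{s}(p)$ toward $p$ (dually for $W^{u}(p)$), so the assignment $p\mapsto W^{s}(p)$ is a bijection between fixed points of $f$ and $f$-invariant stable leaves, and likewise for unstable leaves.

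Next I would treat the case in which $W^{u}(p)\cap W^{s}(q)\ne\varnothing$ and $W^{u}(q)\cap W^{s}(p)\ne\varnothing$; call these unique intersection points $r$ and $r'$. Using $W^{s}(p)\cap W^{u}(p)=\{p\}$, $W^{s}(q)\cap W^{u}(q)=\{q\}$ and fact (ii), one checks $p,q,r,r'$ are distinct and that the four arcs $[p,r]\subset W^{u}(p)$, $[r,q]\subset W^{s}(q)$, $[q,r']\subset W^{u}(q)$, $[r',p]\subset W^{s}(p)$ meet only at their common endpoints, so their union is a Jordan curve bounding a disk $D$. Each of $p,q,r,r'$ is fixed by $f$ (e.g.\ $f(r)\in W^{u}(p)\cap W^{s}(q)=\{r\}$), and $f$ carries each of the four arcs onto the unique sub-arc of the same leaf with the same endpoints; hence $f(\partial D)=\partial D$ and therefore $f(D)=D$. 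Now I would take an interior point $z\in D$: its stable leaf crosses $D$ from a point $a$ in the interior of $[p,r]\subset W^{u}(p)$ to a point $b$ in the interior of $[q,r']\subset W^{u}(q)$ (both endpoints lie on the two unstable sides, otherwise one gets a bigon with $W^{u}(p)$ or $W^{u}(q)$). Since $f$ contracts along stable leaves, $\diam\bigl(W^{s}(f^{n}(z))\cap D\bigr)\to 0$, so $f^{n}(z),f^{n}(a),f^{n}(b)$ share a limit; but on $W^{u}(p)$ the point $a\in(p,r)$ satisfies $f^{n}(a)\to r$, while on $W^{u}(q)$ the point $b\in(r',q)$ satisfies $f^{n}(b)\to r'$, and $r\ne r'$ — a contradiction.

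The remaining case, in which one of these separatrix intersections is empty, is the real obstacle and is where the genuine content of the theorem lies. There $W^{s}(p)$ and $W^{u}(p)$ are confined to a single open half-plane cut off by $W^{u}(q)$, and symmetrically, and one must derive a contradiction from the dynamics on these nested invariant half-planes. The conceptually cleanest route I would take is to pass to the leaf space $L^{s}$ of $\mathcal W^{s}$: it is a simply connected (a priori non-Hausdorff) $1$-manifold on which $f$ acts, fixed points of $f$ correspond by (iii) to fixed leaves, and each fixed leaf is a topologically repelling fixed point of the induced action because the direction transverse to $\mathcal W^{s}$ is uniformly expanded; since a homeomorphism of a simply connected $1$-manifold all of whose fixed points repel can have at most one, this finishes the argument once $L^{s}$ is known to be a genuine line. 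The hard part will be exactly this: verifying that $L^{s}$ is Hausdorff, i.e.\ that $\mathcal W^{s}$ has no non-separated pair of leaves and that the degenerate nested configurations above cannot occur. This is precisely the point at which one invokes (or reproves) Mendes's structure theorem \cite{mendes}, to the effect that $f$ is topologically conjugate to a linear hyperbolic automorphism of $\RR^{2}$, for which the conclusion is immediate.
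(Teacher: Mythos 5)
First, note that the paper does not actually prove this statement: Theorem \ref{mendes} is quoted verbatim from Mendes \cite{mendes} and used as a black box, so your attempt has to stand on its own, and it does not. The case you work out in detail (both heteroclinic intersections nonempty) is in fact vacuous, and can be dismissed in one line rather than by your Jordan-curve construction: by your own item (ii) the intersection $W^u(p)\cap W^s(q)$ is a single point $r$, hence $f(r)=r$, and then the compact arc of $W^u(p)$ joining $p$ to $r$ is $f$-invariant, contradicting the uniform expansion of arclength along unstable leaves. (Incidentally, inside your argument the assertions $f^n(a)\to r$ and $f^n(b)\to r'$ are never justified; they presuppose a dynamics on an invariant arc that, as just noted, cannot exist at all.) Consequently the whole content of the theorem sits in the configuration where the invariant manifolds of $p$ and $q$ are disjoint and nested.

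That is precisely the case your proposal leaves open. You reduce it to showing that the stable leaf space is Hausdorff (no non-separated leaves), correctly identify this as ``the hard part'', and then dispose of it by invoking ``Mendes's structure theorem'' that $f$ is topologically conjugate to a linear hyperbolic automorphism of $\RR^{2}$. This is circular: that conjugacy statement is at least as strong as the result being proved (in \cite{mendes} it appears only as a conjecture, the theorem proved there being exactly the at-most-one-fixed-point statement), so it cannot be used here. Moreover, foliations of the plane do in general have non-Hausdorff leaf spaces, so the final step of your sketch --- a homeomorphism of a line all of whose fixed points are repelling has at most one fixed point --- is simply not available without the missing argument. As it stands, the proposal settles only a case that is trivially empty and defers the genuine content of the theorem back to the reference, so it does not constitute a proof.
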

 
 \begin{proof}[Proof of Theorem \ref{maintheorem}]
By the work of the previous section, we may assume that $f$ has periodic points.
Assume $f$ is not accessible.
Then, there is a $us$-lamination $\Ga(f)$.
Let $\La$ be a minimal subset of $\Ga(f)$.
Since $\Ga(f)$ may be completed to a foliation without compact leaves,
it has at most finitely many minimal sets \cite[Theorem V.4.1.3]{hh}.
Hence, up to replacing $f$ by an iterate, we may assume $\La$ is $f$-invariant.
By Proposition \ref{ushorizontal} and Theorem \ref{leafinvariance},
we may again replace $f$ by an iterate and assume that every leaf of $\La$
is fixed by $f$.
If $\La = M$, then it contains periodic points by assumption.
Otherwise, Proposition \ref{sublamination} implies that $\La$
contains periodic points.
In either case, let $p$ be a periodic point in $\La$.
Again taking an iterate, assume $p$ is fixed.

By Proposition \ref{fixedcover},
there is a good lift $\tilde f$ of $f$ to the universal cover
such that every leaf of the lifted lamination is invariant by $\tilde f$.
Let $\tilde p$ be a lift of $p$ and assume it lies on a leaf
$\tilde L$ of the lifted lamination.
Then there is a deck transformation $\tau$ such that
$\tau \tilde f (\tilde p) = \tilde p$.
As $\tilde f$ fixes $\tilde L$, $\tau$ also fixes $\tilde L$.
Since $\tau$ commutes with $\tau \tilde f$,
$\tau(\tilde p)$ is also a fixed point for $\tau \tilde f$.
Then Theorem \ref{mendes} implies that $\tau(\tilde p) = \tilde p$
and so $\tau$ is the identity map.


We have reduced to the case where $\tilde f(\tilde p) = \tilde p$.
As $\La$ is minimal,  the leaf $L$ through $p$ self-accumulates and there is a lift $\tilde p'$ different from $\tilde p$ lying very close to $\tilde L$.  Observe that $\tilde p'$ is also fixed by  $\tilde f$. According to \cite{bdu} there is an $\tilde f$-invariant center curve $\al$ through $\tilde p'$. We may assume $\tilde p'$ was chosen close enough that $\al$ intersects $\tilde L$.
The lamination $\Ga(f)$ may be completed to a Reebless foliation on $M$
and we may assume that $\al$ is transverse to the lift of this foliation.
Novikov's Theorem then implies that $\al$ intersects $\tilde L$ at most once. Since both $\al$ and $\tilde L$ are $\tilde f$-invariant, their intersection is a fixed point. This fixed point $\tilde L$ is distinct from $\tilde p$
and Theorem \ref{mendes} gives a contradiction.
\end{proof}
 
\section{Small Seifert manifolds} \label{sec.turnover}

In this final section, we prove Theorem \ref{turnover}.
Let $f$ be as in the statement of the theorem.
As explained in \cite{haposh},
$f$ induces a base map $\sigma : \Sigma \to \Sigma$
which is defined up to isotopy,
the orbifold is $\Sigma$ is finitely covered by a hyperbolic surface
and the map $f$ may be lifted to a map $f_1$
defined on the unit tangent bundle of this surface.
As the mapping class group of the orbifold is finite
\cite[Proposition 2.3]{farbmarg},
there is an iterate $f^k$ such that
the base map $\sigma^k$ of $f^k$ is isotopic to the identity.
Replacing $f$ by an iterate, we assume $k = 1$.
Then $f_1$ also has a base map isotopic to the identity
and \cite[Proposition 3.5]{bghp} implies that $f_1$ itself
is isotopic to the identity.
(Note: the proof in \cite{bghp} implicitly uses a theorem
of Matsumoto to show that a closed geodesic may be isotoped
to lie in a leaf of the branching center-stable foliation.)
Accessibility then follows from Theorem \ref{maintheorem}.


\end{document}